\author{Simeon Ball}\thanks{2010 {\it Mathematics Subject Classification.} 51M04, 52C35. \\
The author acknowledges the support of the project MTM2014-54745-P of the Spanish {\em Ministerio de Econom\'ia y Competitividad.}}
\newtheorem{theorem}{Theorem}
\newtheorem{lemma}[theorem]{Lemma}
\title{On sets defining few ordinary planes}
\begin{document}

\baselineskip=17pt

\date{21 June 2017}

\maketitle


\begin{abstract}
Let $S$ be a set of $n$ points in real three-dimensional space, no three collinear and not all co-planar. We prove that if the number of planes incident with exactly three points of $S$ is less than $Kn^2$ for some $K=o(n^{\frac{1}{7}})$ then, for $n$ sufficiently large, all but at most $O(K)$ points of $S$ are contained in the intersection of two quadrics. Furthermore, we prove that there is a constant $c$ such that if the number of planes incident with exactly three points of $S$ is less than $\frac{1}{2}n^2-cn$ then, for $n$ sufficiently large, $S$ is either a prism, an anti-prism, a prism with a point removed or an anti-prism with a point removed. As a corollary to the main result, we deduce the following theorem. Let $S$ be a set of $n$ points in the real plane. If the number of circles incident with exactly three points of $S$ is less than $Kn^2$ for some $K=o(n^{\frac{1}{7}})$ then, for $n$ sufficiently large, all but at most $O(K)$ points of $S$ are contained in a curve of degree at most four.
\end{abstract}

\section{Introduction}

This work is inspired by the article of Green and Tao \cite{GT2013} in which they classify sets $S$ of $n$ points in the real plane with the property that the number of lines incident with exactly two points of $S$ is less than $Kn$,  where $K<c(\log \log n)^c$ for some constant $c$ and $n$ is sufficiently large. The lines incident with exactly two points of $S$ are called {\em ordinary lines}. 

The natural generalisation in three-dimensional space is to attempt to classify sets $S$ of $n$ points where the number of planes incident with exactly three points of $S$ is less than $Kn^2$ for some $K$. The planes incident with exactly three points of $S$ are called {\em ordinary planes}. However, it is easy to construct sets for which there are no ordinary planes, if one is allowed to put more than three points on a line. Indeed, as observed by Motzkin \cite{Motzkin1951}, if $S$ has $r \geqslant 4$ points on one line and $|S|-r$ points on another line, then there will be no planes incident with exactly three points of $S$. 

In this article we will consider a set $S$ of $n$ points, no three collinear and not all co-planar, with the property that the number of planes incident with exactly three points of $S$, the number of ordinary planes spanned by $S$, is less than $Kn^2$, for some $K=K(n)$. Since we will be considering the dual space, we suppose $S$ to be a set of points in the three-dimensional real projective space, which we will denote by $\mathrm{PG}_3({\mathbb R})$. This is no restriction since, given that $S$ is finite, we can always find a plane which is incident with no point of $S$. By changing the basis, we can then assume that this plane is the plane at infinity, so that $S$ will be contained in the affine space. We will use the notation $(x_1,x_2,x_3,x_4)$ to denote a projective point, so the one-dimensional subspace spanned by the vector with these coordinates. 

The problem of minimising the number of hyperplanes incident with exactly $d$ points of a  set $S$ of $n$ points in $d$-space, where $S$ has the property that any subset of $d$  points spans a hyperplane, is a natural generalisation of Sylvester's problem, see \cite{BM2016}. In \cite{Syl1893}, Sylvester asks if it is possible to find a finite set $S$ of points in the real plane, not all collinear, with the property that no line is incident with precisely two points of $S$. In \cite{Gallai1944}, Gallai proves that it is not possible and many articles (for example \cite{KM1958} and more recently, \cite{CS1993} and \cite{BM1990}) have been published concerning the problem of determining the minimum number of ordinary lines spanned by a set of $n$ points. The results of Green and Tao from \cite{GT2013} resolve this problem for sufficiently large $n$. More importantly, they prove asymptotic structural theorems for finite sets of points in the plane spanning few ordinary lines. Here, we are interested in obtaining asymptotic results for three-dimensional space. The main results of this article are the following theorems.

\begin{theorem} \label{inter} 
Let $S$ be a set of $n$ points in $\mathrm{PG}_3({\mathbb R})$, no three collinear and not all co-planar, spanning less than $Kn^2$ ordinary planes, for some $K=o(n^{\frac{1}{7}})$. Then one of the following holds:
\begin{enumerate}
\item[(i)]
There are two distinct quadrics such that all but at most $O(K)$ points of $S$ are contained in the intersection of the quadrics. Furthermore, all but at most $O(K)$ points of $S$ are incident with at least $\frac{3}{2}n-O(K)$ ordinary planes.
\item[(ii)]
There are two planar conic sections of a quadric which contain at least $\frac{1}{2}n-O(K)$ points of $S$.
\item[(iii)]
All but at most $2K$ points of $S$ are contained in a plane.
\end{enumerate}

\end{theorem}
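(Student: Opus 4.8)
The plan is to follow the Green--Tao strategy, passing to the dual and using a projective-duality trick together with a degree argument, rather than trying to work directly with the point set. The rough outline is as follows.

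\textbf{Step 1: Dualise.} First I would dualise the configuration. A set $S$ of $n$ points in $\mathrm{PG}_3(\mathbb R)$ becomes a set of $n$ planes, and an ordinary plane (incident with exactly three points of $S$) becomes a point lying on exactly three of the dual planes. So the hypothesis becomes a bound on the number of "triple points" of an arrangement of $n$ planes, and the condition "no three points of $S$ collinear" dualises to "no three of the planes share a common line". This reformulation makes available the tools of incidence geometry for plane arrangements and, crucially, lets me intersect with a generic plane to reduce to a two-dimensional problem on lines, where the Green--Tao structure theorem for sets spanning few ordinary lines can be invoked.

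\textbf{Step 2: Slice by generic planes and apply Green--Tao in the plane.} Next I would take a generic plane $\pi$ and look at the $n$ lines cut out on $\pi$ by the dual planes; a triple point of the arrangement that happens to lie on $\pi$ gives an ordinary point of this line arrangement, but more usefully I can count, for a generic point $p \in S$, the ordinary planes through $p$: projecting $S \setminus \{p\}$ from $p$ onto a plane gives $n-1$ points, and ordinary planes through $p$ correspond to ordinary lines of this planar point set (using no-three-collinear so the projection is injective). The total count $\sum_p (\text{ordinary lines of projection from } p)$ is, up to a bounded factor, the number of ordinary planes, so for most $p$ the projection from $p$ spans $O(Kn)$ ordinary lines, hence $o(n^{1/7} \cdot n)$, and the Green--Tao theorem applies: the projected set lies, up to $O(K)$ exceptional points, on a cubic curve (an elliptic curve or a line plus conic, etc.), or is mostly on a line/conic, or is an explicit small-error configuration.

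\textbf{Step 3: Glue the planar cubics into quadrics in space.} The heart of the argument --- and the main obstacle --- is to upgrade "the projection of $S$ from almost every point $p$ lies on a cubic curve" to "almost all of $S$ lies on the intersection of two quadrics in $\mathrm{PG}_3(\mathbb R)$". The cone over a planar cubic with vertex $p$ is a cubic cone in space containing almost all of $S$; intersecting two such cones (from two different vertices $p, q \in S$) gives a curve of degree $9$ containing almost all of $S$, which is far from a $(2,2)$-intersection, so a more careful degree/dimension analysis is needed --- one must show the family of cubic cones through (most of) $S$ forces $S$ onto a curve that is actually contained in a pencil of quadrics. This is exactly the three-dimensional analogue of the step in Green--Tao where one shows the relevant cubic curve is unique and an elliptic curve (or degenerate), and I expect to need: (a) a rigidity/uniqueness statement that for large $n$ the cubic cone through almost all of $S$ with a given vertex is essentially unique; (b) an argument that as the vertex varies over $S$, these cones sweep out a quadric or a one-parameter family of quadrics whose common intersection carries $S$ --- likely via the fact that a curve lying on two distinct cubic surfaces but not on any quadric would have too high genus/degree to support $n$ points with so few ordinary planes, or via a direct parametrisation argument showing the curve has arithmetic genus $1$ and degree $4$. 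Controlling the $O(K)$ exceptional sets uniformly across all the vertices $p$ (so that the exceptions do not accumulate) is a further technical nuisance here, handled by a counting/pigeonhole argument on which points are exceptional for how many projections.

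\textbf{Step 4: Separate the three outcomes and get the ordinary-plane lower bound.} Finally I would split into cases according to which Green--Tao alternative occurs for the typical projection. If the projections are mostly on conics or lines, tracing this back gives that $S$ lies (up to $O(K)$ points) on two conic sections of a quadric, which is alternative (ii), or essentially in a plane, which is alternative (iii); the genuinely spatial case yields alternative (i), the $(2,2)$-curve. For the "furthermore" clause in (i) --- that all but $O(K)$ points lie on at least $\tfrac32 n - O(K)$ ordinary planes --- I would argue on the elliptic normal curve directly: three points of an elliptic curve of degree $4$ in $\mathbb P^3$ are coplanar iff they sum (in the group law, with an appropriate base point) to a fixed class, so for each point $p$ the coplanar triples through $p$ are governed by an involution on the remaining $n-1$ points, giving about $n/2$ "special" planes through $p$ and about $n - O(1)$ chords, of which a constant fraction meet $S$ again --- a short computation on the group law then produces the $\tfrac32 n$ count, exactly as the analogous count is carried out for ordinary lines on cubics in Green--Tao. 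The main work throughout is Step 3; Steps 1, 2 and 4 are, respectively, bookkeeping, a citation, and casework.
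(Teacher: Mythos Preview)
Your Steps 1, 2 and 4 match the paper's framework: the paper also shows by a simple counting that most $p\in S$ project $S$ onto a planar set with $O(Kn)$ ordinary lines, applies the Green--Tao trichotomy (their Proposition~5.3) to each such projection, and then handles the three outcomes via three separate lemmas (Lemmas~\ref{typeone}, \ref{typetwo}, \ref{typethree} here). The casework you sketch in Step~4 is close to what actually happens; in particular the $\tfrac32 n$ count in (i) is obtained in the paper essentially as you say, via tangent lines to the cubic $V(\psi_p)$ (Claim~3 of Lemma~\ref{typeone}).

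The genuine gap is Step~3, which you correctly flag as the heart of the matter but for which your proposal has no working mechanism. Intersecting two cubic cones and then hoping a genus/degree argument forces the curve down to degree~$4$ is not what the paper does, and I do not see how to make that route go through: there is no a~priori reason the degree-$9$ intersection should split off a degree-$4$ component, and ``rigidity as the vertex varies'' is not a statement you can cite. The paper's mechanism is quite different and has two ingredients you are missing entirely. First, it runs a Melchior--Euler count \emph{in the dual} (Section~3): the planes of $S^*$ cut each other in a graph $\Gamma$, and Euler's formula forces most edges to be ``rather good'', around which one proves a rigid local combinatorial structure called a \emph{tetra-grid} (Section~4). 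Second, the algebraic input is not B\'ezout on cubic cones but the classical \emph{eight associated points theorem} (Theorem~\ref{7gives8} here): three pairs of planes meet in eight points, and any quadric through seven of them passes through the eighth. Applied repeatedly along a segment of rather good edges, this manufactures two explicit quadrics $V(\psi_1),V(\psi_2)$ through a sizeable subset of $S$ (Lemma~\ref{segment}). Only \emph{then} does the paper return to the cubic cones: from $\psi_1,\psi_2$ one forms the cubic $\phi_p(X)=b_1(p,X)\psi_2(X)-b_2(p,X)\psi_1(X)$, checks it is a cone with vertex $p$, and proves the key algebraic lemma that $V(\phi_p)\cap V(\phi_q)\setminus V(\phi_{pq})\subseteq V(\psi_1,\psi_2)$ (Lemma~\ref{thisorthat}). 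Matching the Green--Tao cubic $V(\psi_p)$ with the projection of $V(\phi_p)$ (they share $>9$ points, hence coincide by B\'ezout) then pushes almost all of $S$ onto $V(\psi_1,\psi_2)$. So the quadrics are \emph{constructed first} from the dual combinatorics and the eight-points theorem, and the cubic cones are identified afterwards; your proposal has this backwards and lacks both the Melchior/tetra-grid step and the eight-points input.
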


In the following theorem, by strengthening the hypothesis, we classify the extremal examples. In the case when $n$ is even and sufficiently large, it was already proven in \cite{Monserrat2015}, using \cite[Theorem 3.8]{GT2013} of Green and Tao, that the extremal examples are a prism or an anti-prism.

\begin{theorem} \label{detailed} 
Let $S$ be a set of $n$ points in $\mathrm{PG}_3({\mathbb R})$, no three collinear and not all co-planar. There is a constant $c$ such that if $S$ spans less than $\frac{1}{2}n^2-cn$ ordinary planes then, for $n$ sufficiently large, $S$ is either a prism, an anti-prism, a prism with a point removed or an anti-prism with a point removed.
\end{theorem}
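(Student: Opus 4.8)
The plan is to bootstrap from Theorem~\ref{inter}. Since $\tfrac12 n^{2}-cn<\tfrac12 n^{2}$, we may apply Theorem~\ref{inter} with the constant $K=\tfrac12$, so that every $O(K)$ term becomes $O(1)$ and in case~(iii) all but at most one point of $S$ lies in a plane $\pi$. Case~(iii) cannot occur: as $S$ is not coplanar there is exactly one point $p\notin\pi$, and since no three points of $S$ are collinear, for each of the $\binom{n-1}{2}$ pairs $\{q,r\}\subseteq S\cap\pi$ the plane $\langle p,q,r\rangle$ contains precisely the three points $p,q,r$ of $S$; these $\binom{n-1}{2}$ ordinary planes are distinct, so $S$ spans more than $\tfrac12 n^{2}-cn$ ordinary planes once $c\ge 2$, a contradiction. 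Case~(i) cannot occur either: its ``furthermore'' clause gives all but $O(1)$ points of $S$ on at least $\tfrac32 n-O(1)$ ordinary planes, and counting incidences between points and ordinary planes from both sides, using that each ordinary plane carries exactly three points of $S$, produces at least $\tfrac13(n-O(1))(\tfrac32 n-O(1))=\tfrac12 n^{2}-O(n)$ ordinary planes, again contradicting the hypothesis once $c$ is large.

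Thus case~(ii) holds: there are conics $C_{1}=Q\cap\pi_{1}$ and $C_{2}=Q\cap\pi_{2}$ cut from a quadric $Q$ and together carrying at least $\tfrac12 n-O(1)$ points of $S$. The next step is to upgrade this to: all but $O(1)$ points of $S$ lie on $C_{1}\cup C_{2}$, with $\tfrac12 n+O(1)$ on each. For the first assertion, a point $p$ of $S$ lying off $\pi_{1}\cup\pi_{2}$ joined to the $\Omega(n^{2})$ two-secant lines of the more populous conic produces $\Omega(n^{2})$ distinct planes, each carrying a prescribed secant pair of $S$; a pencil argument bounds how many of these can absorb a fourth point of $S$, and since $S$ spans fewer than $\tfrac12 n^{2}$ ordinary planes the number of points off $C_{1}\cup C_{2}$ must be $O(1)$. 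For the second assertion one notes that if one conic carried $(\tfrac12+\delta)n$ points for a fixed $\delta>0$, the doubly-secant count below (or, in the very unbalanced range, the case-(iii) estimate applied to the populous plane) would force more than $\tfrac12 n^{2}-cn$ ordinary planes; writing $a=|S\cap C_{1}|$, $b=|S\cap C_{2}|$ we therefore have $a+b=n-O(1)$ and, after that count, $a=b=\tfrac12 n+O(1)$.

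The heart of the proof --- and the step I expect to be the main obstacle --- is a rigidity statement for two conics carrying $\tfrac12 n+O(1)$ points each and spanning few ordinary planes. Work in the line $\lambda=\pi_{1}\cap\pi_{2}$. Each $x\in\lambda$ induces an involution $\iota_{i}^{x}$ of $C_{i}$, pairing the two points in which a variable line through $x$ meets $C_{i}$, and every plane meeting both $C_{1}$ and $C_{2}$ in two points of $S$ corresponds to a unique $x\in\lambda$ together with an $\iota_{1}^{x}$-orbit contained in $S\cap C_{1}$ and an $\iota_{2}^{x}$-orbit contained in $S\cap C_{2}$. Writing $e_{i}(x)$ for the number of such orbits, one has $\sum_{x}e_{i}(x)=\binom{a}{2}-O(1)$ and $e_{i}(x)\le\tfrac12 a$, so the number of these ``doubly-secant'' four-point planes, namely $\sum_{x}e_{1}(x)e_{2}(x)$, is at most $\tfrac14 ab\min(a,b)+O(n^{2})$; while counting the planes through two points of one conic and one of the other shows $S$ spans at least $\tfrac12 ab(a+b)-4\sum_{x}e_{1}(x)e_{2}(x)-O(n^{2})$ ordinary planes. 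Feeding both estimates into the hypothesis pins $a=b=\tfrac12 n+O(1)$, forces $\sum_{x}e_{1}(x)e_{2}(x)$ to within $O(n^{2})$ of its maximum, and hence forces $e_{1}$ and $e_{2}$ each to equal $\tfrac14 n+O(1)$ on a common set $\Lambda\subseteq\lambda$ of size $\tfrac12 n+O(1)$. Therefore $S\cap C_{1}$ is a set of $\tfrac12 n+O(1)$ points of a conic almost invariant under $\tfrac12 n+O(1)$ of the involutions $\iota_{1}^{x}$, all of which fix the pair $\lambda\cap C_{1}$; identifying $C_{1}$ with $\mathrm{PG}_{1}({\mathbb R})$ so that these involutions become reflections of a circle, the composite of any two of them almost preserves $S\cap C_{1}$, and having $\tfrac12 n+O(1)$ such near-symmetries pins $S\cap C_{1}$ down, up to a projective transformation of $Q$, to a coset of a cyclic subgroup of order $\tfrac12 n$ --- a regular polygon --- and likewise for $S\cap C_{2}$. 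Whether the two cyclic structures are aligned, giving a prism, or offset by half a step, giving an anti-prism, is decided by the same counting.

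It remains to dispose of the $O(1)$ exceptional points and the $O(1)$ vertices by which the two polygons might fail to be exactly regular and exactly aligned: an exceptional point not completing a regular polygon on $\pi_{1}$ or $\pi_{2}$ would span $\gtrsim\binom{n/2}{2}$ ordinary planes, a short argument shows that at most one vertex can be missing and none can be genuinely superfluous, and these constraints together with the parity of $n$ leave exactly the four configurations in the statement. When $n$ is even the rigidity step can instead be carried out by projecting $S$ from a point on few ordinary planes and appealing to Green and Tao's exact planar classification, as in \cite{Monserrat2015}; the crux of the argument in the odd case is making the error terms above small enough to absorb --- that is, passing from an \emph{approximate} to an \emph{exact} description of a large subset of a conic that is invariant under many reflections.
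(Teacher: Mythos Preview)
Your reduction to case (ii) of Theorem~\ref{inter} is correct and matches the paper: case (iii) is killed by the $\binom{n-1}{2}$ ordinary planes through the one non-coplanar point, and case (i) by the incidence count $\tfrac13(n-O(1))(\tfrac32 n-O(1))$. The observation that each conic already carries $\tfrac12 n-O(1)$ points of $S$, so together they carry all but $O(1)$, is also right and needs no further ``pencil argument''.

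The divergence is in the rigidity step, and here there is a genuine gap. You attempt to extract the regular-polygon structure on $S\cap C_{1}$ directly from the approximate invariance under many conic involutions $\iota_{1}^{x}$, and you correctly flag the passage ``from an approximate to an exact description of a large subset of a conic invariant under many reflections'' as the crux. This passage is not supplied, and it is not soft: it is essentially the content of Green and Tao's planar structure lemmas (their Lemma~7.2 and Lemma~7.4), which take real work to prove. Your counting does force $\sum_{x}e_{1}(x)e_{2}(x)$ within $O(n^{2})$ of its maximum, and with care one can deduce that $e_{i}(x)=\tfrac14 n+O(1)$ on a common set $\Lambda$ of size $\tfrac12 n+O(1)$; but going from ``each of $\tfrac12 n$ reflections moves only $O(1)$ points of $S\cap C_{1}$ out of $S\cap C_{1}$'' to ``$S\cap C_{1}$ differs from a regular $m$-gon by $O(1)$ points'' is exactly the hard approximate-group step you would like to avoid, and the errors can in principle accumulate across the $\tfrac12 n$ reflections.

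The paper sidesteps this entirely by \emph{projecting}. Pick $p\in S\cap C_{2}$ on at most $dn$ ordinary planes; then the projection of $S$ from $p$ is a planar set with at most $dn$ ordinary lines, and \cite[Lemma~7.4]{GT2013} says it differs from a B\"or\"oczky configuration by $O(1)$ points. Since $S\cap C_{1}$ sits on the conic part of that configuration, it differs from a regular $m_{1}$-gon by $O(1)$ points. The same argument with the roles of $C_{1}$ and $C_{2}$ swapped handles $S\cap C_{2}$. The synchronisation of the two polygons is then obtained by comparing the two sets $D_{1},D_{2}\subset\pi_{1}\cap\pi_{2}$ of directions of bisecants: any point of $D_{1}\triangle D_{2}$ is on $\tfrac14 n^{2}-O(n)$ ordinary planes, so $D_{1}=D_{2}$, whence $m_{1}=m_{2}$, and the integrality of $i+j=k+\ell+2\delta$ forces $\delta\in\{0,\tfrac12\}$. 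This is both shorter and works uniformly for odd and even $n$; your remark that one can quote \cite{Monserrat2015} for even $n$ is in fact the same projection idea.

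Finally, your endgame (``an exceptional point would span $\gtrsim\binom{n/2}{2}$ ordinary planes'') is the right shape but misses some case analysis that the paper carries out: an exceptional point $x$ must be treated separately according as $x\notin\pi_{1}\cup\pi_{2}\cup\{p_{\infty},p_{\infty}'\}$, $x\in\pi_{1}\cup\pi_{2}$, or $x\in\{p_{\infty},p_{\infty}'\}$ (the last case forces the configuration to be an anti-prism and then every mixed plane through $x$ is ordinary). Each case uses \cite[Corollary~7.6]{GT2013} to produce $\tfrac14 n^{2}-O(n)$ ordinary planes through $x$, and the bound $|X\setminus S|\le 1$ comes from the fact that each vertex of $X$ lies on $\tfrac18 n^{2}-O(n)$ four-point planes of $X$.
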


In the following section we define precisely what we mean by a prism and an anti-prism.

\section{The prism and the anti-prism} \label{prism}

Let $\pi_1$ and $\pi_2$ be distinct planes of $\mathrm{PG}_3({\mathbb R})$. Let $R_1$ be a set of points projectively equivalent to the vertices of a regular $m$-gon in $\pi_1$ and let $R_2$ be a set of points projectively equivalent to the vertices of a regular $m$-gon in $\pi_2$. If there is a point $p_{\infty}$ with the property that for every point $p \in R_1$, the line joining $p$ and $p_{\infty}$ intesects $\pi_2$ in a point of $R_2$ then we say that $R_1 \cup R_2$ is a {\em prism}.

For example, taking $\pi_1$ to be the plane $X_3=0$ and $\pi_2$ to be the plane $X_4=0$, we let 
$$
p_i=(\cos (2\pi i/m), \sin (2 \pi i/m), 0 , 1 )
$$ 
and define
$$
R_1=\{ p_i \ | \ i=0,\ldots,m-1 \},
$$
let 
$$
q_k=(\cos (2\pi k/m)-a, \sin (2 \pi k/m)-b,  1, 0 )
$$
for some $a,b \in {\mathbb R}$ and define
$$
R_2=\{ q_k  \ | \ k=0,\ldots,m-1 \}.
$$
Then $p_{\infty}$ is the projective point $(a,b,-1,1)$.

It follows from the sum-angle formulas for sine and cosine that the line joining the points $p_i$ and $p_j$ meets the line $\ell_{\infty}= \pi_1 \cap \pi_2$ in the point
$$
s_{\theta}=(\sin (\pi \theta/m), \cos (\pi \theta/m),0,0),
$$
where $\theta=-(i+j)$ mod $m$. Likewise, the line joining the points $q_k$ and $q_{\ell}$ meets the line $\ell_{\infty}= \pi_1 \cap \pi_2$ in the point $s_{\theta}$, where $\theta=-(k+\ell)$ mod $m$.

Let $S$ be the union of $R_1$ and $R_2$. The three points $p_i$, $p_j$ and $q_k$ and the three points $q_i$, $q_j$ and $p_k$ span ordinary planes if and only if there is no $\ell \neq k$ such that $i+j=k+\ell$ mod $m$, which is if and only if $i+j=2k$ mod $m$. 

If $m$ is odd then $i+j=2k$ mod $m$ has $\frac{1}{2}m(m-1)$ solutions where $i \neq j$. Therefore, if $n=2$ mod $4$ then a prism with $n$ points spans $2\frac{1}{2}(\frac{1}{2}n)(\frac{1}{2}n-1)=\frac{1}{4}n^2-\frac{1}{2}n$ ordinary planes. 

If $m$ is even then $i+j=2k$ mod $m$ has $\frac{1}{2}m^2-m$ solutions where $i \neq j$. Therefore, if $n=0$ mod $4$ then a prism with $n$ points spans $\frac{1}{4}n^2-n$ ordinary planes.

\begin{figure}[h]
\centering
\includegraphics[width=4.6 in]{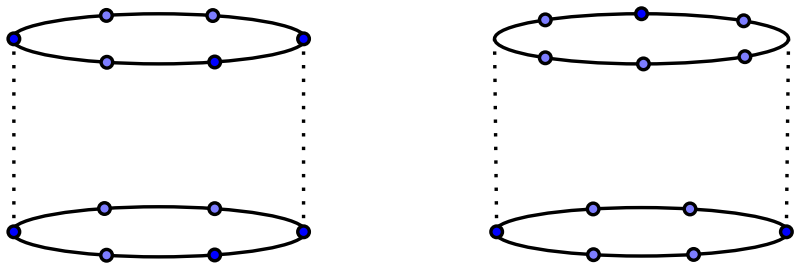}
\caption{The prism and the anti-prism with 12 points.}
\label{prisms}
\end{figure}

Let 
$$
r_k=(\cos (2\pi (k+ \tfrac{1}{2})/m)-a, \sin (2 \pi (k+ \tfrac{1}{2})/m)-b, 1,0 )
$$
for some $a,b \in {\mathbb R}$ and define
$$
R_3=\{ r_k  \ | \ k=0,\ldots,m-1 \}.
$$
Then the line joining the points $r_k$ and $r_{\ell}$ meets the line $\ell_{\infty}= \pi_1 \cap \pi_2$ in the point $s_{\theta}$, where $\theta=-(k+\ell+1)$ mod $m$.

Let $S$ be the union of $R_1$ and $R_3$. The three points $p_i$, $p_j$ and $r_k$ span an ordinary plane if and only if there is no $\ell \neq k$ such that $i+j=k+\ell+1$ mod $m$, which is if and only if $i+j=2k+1$ mod $m$. The three points $r_i$, $r_j$ and $p_k$ span an ordinary plane if and only if there is no $\ell \neq k$ such that $i+j+1=k+\ell$ mod $m$, which is if and only if $i+j=2k-1$ mod $m$. 

If $m$ is odd then $i+j-1=2k$ mod $m$ and $i+j+1=2k$ both have $\frac{1}{2}m(m-1)$ solutions where $i \neq j$. Therefore, if $n=2$ mod $4$ then an anti-prism with $n$ points spans $\frac{1}{4}n^2-\frac{1}{2}n$ ordinary planes. 

If $m$ is even then $i+j-1=2k$ mod $m$ and $i+j+1=2k$ both have $\frac{1}{2}m^2$ solutions where $i \neq j$. Therefore, if $n=0$ mod $4$ then an anti-prism with $n$ points spans $\frac{1}{4}n^2$ ordinary planes.

\section{Extending Melchior's use of Euler's formula}

In this section we use Melchior's approach to Sylvester's problem from  \cite{Melchior1940}, which uses Euler's formula for planar graphs, generalising it to three dimensions. 

Let $S$ be a set of $n$ points in $\mathrm{PG}_3({\mathbb R})$, no three collinear and not all co-planar.

In the dual space the set $S$ dualises to a set $S^{*}$ of $n$ planes. 
Any $3$ planes of $S^{*}$ intersect in a point, since any $3$ points of $S$ span a plane. Let $V$ denote this set of points. Let $L$ denote the set of lines cut out by the intersection of two planes of $S^{*}$, i.e. the dual of the lines spanned by two points of $S$.

Denote by $\Gamma=\Gamma(S)$ the graph embedded in the three dimensional projective space which has vertices $V$ and edges $E$ which are the line segments induced from the lines of $L$, cut out by the vertices.

Let $\pi$ be an element of $S^{*}$.

Consider the graph $\Gamma_{\pi}$ embedded in the projective plane $\pi$, which has vertices $V_{\pi}=V \cap \pi$ and edges $E_{\pi}$ which are the line segments induced from the lines of $L$ which are contained in $\pi$.  Euler's formula gives
$$
|V_{\pi}|-|E_{\pi}|+|F_{\pi}|=1,
$$
where $F_{\pi}$ is the number of faces of the embedded graph $\Gamma_{\pi}$.
Summing over the elements of $S^{*}$ gives
\begin{equation} \label{eq0}
\sum_{i \geqslant 3} i v_i- 2|E|+|F|=n,
\end{equation}
where $F$ is the set of faces of $\Gamma$ contained in a $\pi$ for some $\pi \in S^*$
and $v_i$ is the number of vertices incident with exactly $i$ planes of $S^{*}$.

Let $f_i$ denote the number of faces of $F$ with $i$ edges.
An edge is contained in a line $\ell$ which is the intersection of two planes of $S^{*}$. So each edge of $E$ is on precisely $4$ faces of $F$. Hence
\begin{equation} \label{eq1}
4|E|=\sum_{j \geqslant 3} j f_j.
\end{equation}

Next, we count in two ways pairs $(v,e)$ where $v$ is an end vertex of an edge $e$. If $v$ is a vertex incident with $i$ planes of $S^*$ then it is on $i(i-1)$ edges. Since an edge has two end vertices, we have
\begin{equation} \label{eq2}
2|E|=\sum_{i \geqslant 3} i(i-1) v_i.
\end{equation}

Combining (\ref{eq0}),  (\ref{eq1}) and  (\ref{eq2}), we obtain
$$
-\sum_{i \geqslant 3} i(i-4)v_i-\sum_{j \geqslant 4} (j-3)f_j=3n
$$
which we write as
\begin{equation} \label{eq3}
3v_3=3n+\sum_{i\geqslant 5} i(i-4)v_i+\sum_{j \geqslant 4} (j-3)f_j.
\end{equation}

Let us call an edge of $\Gamma$ {\em good} if it is an edge on four triangles of $F$ and if both its end vertices have degree $12$. If an edge is not good then it is {\em bad}.

\begin{lemma} \label{bad}
Let $S$ be a set of $n$ points of $\mathrm{PG}_3({\mathbb R})$, no three collinear and not all co-planar, spanning at most $Kn^2$ ordinary planes. Then the number of bad edges in $\Gamma$ is at most $30Kn^2$.
\end{lemma}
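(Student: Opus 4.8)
The plan is to read off what (\ref{eq3}) says once one knows that $v_3$ counts the ordinary planes, and then to sort the bad edges into a few types, each controlled by one of the nonnegative sums appearing in (\ref{eq3}).

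First I would record two structural facts. A vertex $v \in V$ incident with exactly $i$ planes of $S^{*}$ lies on $\binom{i}{2}$ lines of $L$, and on each such line it is an end vertex of exactly two edges of $\Gamma$; hence $v$ has degree $i(i-1)$, and in particular $v$ has degree $12$ precisely when $i=4$. Secondly, a point incident with exactly three planes of $S^{*}$ dualises to a plane incident with exactly three points of $S$ (here the hypothesis that no three points of $S$ are collinear guarantees that any three planes of $S^{*}$ meet in a single point), so $v_3$ equals the number of ordinary planes and therefore $v_3 \leqslant Kn^2$. Substituting this into (\ref{eq3}) and discarding the nonnegative term $3n$ gives
$$
\sum_{i\geqslant5} i(i-4)v_i \leqslant 3Kn^2 \qquad\text{and}\qquad \sum_{j\geqslant4}(j-3)f_j \leqslant 3Kn^2,
$$
since both sums are themselves nonnegative.

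Next I would split the bad edges into three classes, not necessarily disjoint: (a) those with an end vertex counted by $v_3$ (degree $6$); (b) those with an end vertex incident with $i \geqslant 5$ planes of $S^{*}$ (degree $i(i-1)$); (c) those lying on a face of $F$ with at least four edges. Every bad edge lies in at least one class: if the edge fails to have both end vertices of degree $12$, then some end vertex is incident with $i \neq 4$ planes, so $i=3$ or $i\geqslant 5$; and if the edge fails to lie on four triangles, then, as each edge lies on exactly four faces of $F$, one of those faces has at least four edges. The three classes are then bounded by elementary estimates. Class (a) has at most $6v_3 \leqslant 6Kn^2$ edges. Class (b) has at most $\sum_{i\geqslant5} i(i-1)v_i$ edges, and since $i(i-1)\leqslant 4\,i(i-4)$ for all $i\geqslant5$ this is at most $4\sum_{i\geqslant5}i(i-4)v_i\leqslant 12Kn^2$. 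Class (c) has at most $\sum_{j\geqslant4} j f_j$ edges, and since $j\leqslant 4(j-3)$ for all $j\geqslant4$ this is at most $4\sum_{j\geqslant4}(j-3)f_j\leqslant 12Kn^2$. Summing, the number of bad edges is at most $6Kn^2+12Kn^2+12Kn^2=30Kn^2$.

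I do not expect a genuine obstacle here; the content is entirely in identifying $v_3$ with the ordinary-plane count and in the bookkeeping. The only points needing care are: checking that the degree of a vertex on $i$ planes is $i(i-1)$, so that ``degree $12$'' is the same as ``on four planes''; verifying that the three classes together cover all bad edges; and choosing the numerical constants so that the total comes out to exactly $30Kn^2$. The overcounting inherent in the classes (an edge may lie in more than one class, and in class (c) an edge may lie on as many as four large faces) only works in our favour, since we are after an upper bound.
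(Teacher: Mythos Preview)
Your proof is correct and follows essentially the same approach as the paper: identify $v_3$ with the ordinary-plane count, extract the two inequalities $\sum_{i\geqslant 5} i(i-4)v_i \leqslant 3Kn^2$ and $\sum_{j\geqslant 4}(j-3)f_j \leqslant 3Kn^2$ from (\ref{eq3}), and then bound the bad edges by $6v_3 + \sum_{i\geqslant 5} i(i-1)v_i + \sum_{j\geqslant 4} j f_j$. The only cosmetic difference is that the paper bounds $\sum_{j\geqslant 4} j f_j$ and $\sum_{i\geqslant 5} i(i-1)v_i$ via the decompositions $j = (j-3)+3$ and $i(i-1) = i(i-4)+3i$, whereas you use the single-step inequalities $j \leqslant 4(j-3)$ and $i(i-1) \leqslant 4\,i(i-4)$; both routes yield the same $12Kn^2$ bound for each sum.
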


\begin{proof}
Since the number of planes incident with exactly $3$ points of $S$ is less than $Kn^2$, we have that $v_3 \leqslant Kn^2$. Then (\ref{eq3}) gives
$$
\sum_{j \geqslant 4} (j-3)f_j \leqslant 3Kn^2,
$$
which implies $\sum_{j \geqslant 4} f_j \leqslant 3Kn^2$ and so
$$
\sum_{j \geqslant 4} j f_j \leqslant 3Kn^2+9Kn^2=12Kn^2.
$$
Therefore, there are at most $12Kn^2$ bad edges which are the edge of some
non-triangular face.

Similarly (\ref{eq3}) implies
$$
\sum_{i\geqslant 5} i(i-4)v_i \leqslant 3Kn^2,
$$
which implies $\sum_{i\geqslant 5} i v_i \leqslant 3Kn^2$ and so
$$
\sum_{i\geqslant 5} i(i-1)v_i \leqslant 3Kn^2 + 3\sum_{i\geqslant 5} iv_i \leqslant 12Kn^2.
$$

The number of edges with end vertices are of degree $6$ or more than $12$ is at most
$$
6v_3+ \sum_{i \geqslant 5} i(i-1)v_i \leqslant 18Kn^2.
$$

The number of edges on faces (contained in some $\pi \in S^*$) which are not triangles is at most
$$
\sum_{j \geqslant 4} jf_j \leqslant 12Kn^2.
$$

Therefore, there are at most $30Kn^2$ bad edges.
\end{proof}

We call an edge of $\Gamma$ {\em rather good} if it is good and all edges emanating from its endpoints are also good. If an edge is not rather good then it is {\em slightly bad}.

\begin{lemma} \label{slightlybad}
Let $S$ be a set of $n$ points of $\mathrm{PG}_3({\mathbb R})$, no three collinear and not all co-planar, spanning at most $Kn^2$ ordinary planes. Then the number of slightly bad edges in $\Gamma$ is at most $690Kn^2$.
\end{lemma}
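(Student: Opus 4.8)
The plan is to split the slightly bad edges into two groups: those that are already bad, and those that are good but fail to be rather good. An edge $e$ is slightly bad exactly when either $e$ is bad, or $e$ is good but some bad edge emanates from one of its two endpoints. By Lemma~\ref{bad} the first group has size at most $30Kn^2$, so everything reduces to bounding the second group.

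To bound the second group I would use a double count of pairs $(e,b)$ in which $e$ is good but not rather good, $b$ is bad, and $e$ and $b$ share an endpoint. Each such $e$ produces at least one pair, by the definition of ``rather good''. For the other side of the count, recall that if $e$ is good then both of its endpoints have degree $12$ in $\Gamma$ (this is part of the definition of ``good''), so at each endpoint of a good edge there are exactly $12$ incident edges, and hence at most $11$ incident edges other than $e$ itself. Now fix a bad edge $b$ with endpoints $w_1$ and $w_2$. Any good edge $e$ that pairs with $b$ at $w_j$ must have $w_j$ as an endpoint, so $w_j$ has degree $12$ and $e$ is one of the at most $11$ edges at $w_j$ different from $b$; thus $b$ lies in at most $11+11 = 22$ pairs. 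It follows that the number of good-but-not-rather-good edges is at most $22$ times the number of bad edges, which by Lemma~\ref{bad} is at most $22\cdot 30Kn^2 = 660Kn^2$.

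Adding the two bounds gives at most $30Kn^2 + 660Kn^2 = 690Kn^2$ slightly bad edges, as claimed. I do not expect a genuine obstacle here: the argument is a purely combinatorial consequence of Lemma~\ref{bad} together with the degree-$12$ constraint built into the definition of a good edge. The only thing to be careful about is the bookkeeping in the double count --- in particular, noting that the edge $e$ emanates from both of its own endpoints but needs no separate treatment because it is good by hypothesis, and that summing the contributions at the two endpoints of a bad edge correctly yields the factor $22$ rather than $24$.
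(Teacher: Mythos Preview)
Your proof is correct and follows essentially the same approach as the paper: split into bad edges plus good-but-not-rather-good edges, then use the degree-$12$ constraint at the endpoints of good edges to bound the latter by $22$ times the number of bad edges. The paper's proof is terser but the factor $22$ arises from exactly the double count you describe.
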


\begin{proof}
Each slightly bad edge is either bad or is a good edge which is adjacent to a bad edge. By Lemma~\ref{bad}, there are at most $30Kn^2$ bad edges. Each end of a good edge is a vertex of degree $12$, so we can bound the number of slightly bad edges in $\Gamma$ by
$$
30Kn^2+(22 \times 30)Kn^2=690Kn^2.
$$
\end{proof}

\section{The tetra-grid and the double-diamond}

We define a tetra-grid, which is a three-dimensional analogue of the triangular grid defined by Green and Tao in \cite{GT2013}.

Let $I$, $J$, $K$ and $L$ be four distinct intervals in ${\mathbb Z}$ (so $I$ is of the form $\{ i \in {\mathbb Z} \ | \ i_{-} \leqslant i \leqslant i_+\}$ for some $i_{-}$ and $i_+$). A {\em tetra-grid} is a collection of planes $(p_i^*)_{i \in I}$, $(q_j^*)_{j \in J}$, $(r_k^*)_{k \in K}$ and $(s_l^*)_{l \in L}$, (where $p_i^*$ indicates the plane dual to the point $p_i$), such that $i+j+k+l=0$ if and only if the planes $p_i^*,q_j^*,r_k^*,s_l^*$ meet in a point. The {\em dimension} of a tetra-grid is $|I| \times |J| \times |K| \times |L|$.

\begin{lemma} \label{notwisting}
Let $e$ be a rather good edge of $\Gamma$ lying on the line $p^*\cap q^*$, where $p,q \in S$. The sides of a triangle in $p^*$ with side $e$ has sides cut out by $r^*$ and $s^*$ if and only if the sides of a triangle in $q^*$ with side $e$ has sides cut out by $r^*$ and $s^*$.
\end{lemma}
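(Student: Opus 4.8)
The plan is to translate the statement into a purely local combinatorial assertion about the arrangements of planes through the two endpoints of $e$, and then to rule out the ``twisted'' possibility using the triangulations forced by the hypothesis that $e$ is rather good.

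First I would fix notation. Write $e=v_1v_2$. Since $e$ is rather good, every edge emanating from $v_1$ and from $v_2$ is good, so $v_1$ and $v_2$ have degree $12$; hence each lies on exactly four planes of $S^*$, and every face of $\Gamma$ incident with $v_1$ or $v_2$ is a triangle. Let $A,B$ be the two planes through $v_1$ other than $p^*,q^*$, and $C,D$ the two planes through $v_2$ other than $p^*,q^*$. A quick argument shows $\{A,B\}\cap\{C,D\}=\emptyset$ and none of these four planes is $p^*$ or $q^*$: otherwise a plane of $S^*$ would contain the whole line $p^*\cap q^*$, forcing three points of $S$ to be collinear. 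Consequently any triangle of $\Gamma_{p^*}$ (resp.\ $\Gamma_{q^*}$) with side $e$ has its other two sides cut out by one of $A,B$ at $v_1$ and one of $C,D$ at $v_2$, and its apex $p^*\cap r^*\cap s^*$ (resp.\ $q^*\cap r^*\cap s^*$) is automatically a vertex of $\Gamma$. Inside $p^*$ the three lines $p^*\cap q^*$, $p^*\cap A$, $p^*\cap B$ through $v_1$ have their six rays in cyclic order so that the ray of $e$ at $v_1$ is flanked by a ray of $p^*\cap A$ on one side and a ray of $p^*\cap B$ on the other, and similarly at $v_2$ with $C,D$. Since $e$ lies on exactly four triangular faces, two in $p^*$ and two in $q^*$ (one on each side of the line $p^*\cap q^*$ inside each of these two planes), it follows after relabelling that the two triangles of $\Gamma_{p^*}$ on $e$ have side-pairs $\{A,C\}$ and $\{B,D\}$, while those of $\Gamma_{q^*}$ have side-pairs $\{A,C'\}$ and $\{B,D'\}$ with $\{C',D'\}=\{C,D\}$. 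Thus the lemma is equivalent to the statement that the matching between $\{A,B\}$ and $\{C,D\}$ read off inside $p^*$ coincides with the one read off inside $q^*$, i.e.\ that the ``twisted'' case $\{A,D\},\{B,C\}$ in $q^*$ does not occur.

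To exclude twisting, suppose $v_1v_2w$ is the triangle of $\Gamma_{p^*}$ of type $\{A,C\}$, so $w=p^*\cap A\cap C$ with $v_1w\subset p^*\cap A$ and $v_2w\subset p^*\cap C$, and let $v_1v_2w'$ be the triangle of $\Gamma_{q^*}$ whose side at $v_1$ lies on $q^*\cap A$, so $w'=q^*\cap A\cap C'$ with $C'\in\{C,D\}$. Both $v_1w$ and $v_1w'$ lie in the plane $A$ and both are good edges. I would then work inside $\Gamma_A$: its faces at $v_1$ are triangles, the lines through $v_1$ in $A$ are $A\cap p^*$, $A\cap q^*$, $A\cap B$, and $v_1w$, $v_1w'$ lie on the first two of these. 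Tracking the triangular faces of $\Gamma_A$ interpolating between these two edges at $v_1$ — using that the intervening edges and $v_1w,v_1w'$ are good, that $w$ lies on exactly the four planes $p^*,A,C$ and one further plane, and that $w'$ lies on $q^*,A,C'$ and one further plane (so that a plane other than $A$ carrying an edge through both $w$ and $w'$ must be $C$) — should force $w$ and $w'$ to be joined by an edge on the line $A\cap C$, whence $C'=C$; since the two triangles of $\Gamma_{q^*}$ on $e$ use $\{A,B\}$ at $v_1$ and $\{C,D\}$ at $v_2$, the other one then has type $\{B,D\}$, and the lemma follows.

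I expect the last step to be the main obstacle. Ruling out the twisted configuration is genuinely not forced by the local structure at $v_1$ and $v_2$ alone, since the cyclic orders inside $p^*$ and inside $q^*$ are a priori unrelated; it really requires propagating the triangulation into one of the auxiliary planes $A,B,C,D$, and the delicate point is the bookkeeping of exactly which triangular faces appear around $v_1$ (and around the apices $w,w'$) inside that plane, in particular whether the line $A\cap B$ separates the rays $v_1w$ and $v_1w'$ or not. Handling that case distinction carefully, using the goodness of all the relevant edges, is where the work lies.
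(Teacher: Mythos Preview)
Your reduction to the twist/no-twist dichotomy is correct, and the instinct to move into one of the auxiliary planes and exploit the triangulation there is exactly what the paper does. But there are two genuine gaps, not merely unfinished bookkeeping.

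First, the adjacency of $v_1w$ and $v_1w'$ inside $\Gamma_A$ is not a case split to be survived; it is the crux. The paper settles it by \emph{projecting from an endpoint of $e$}: the four planes through that vertex become four lines in a plane, and the adjacencies already known from the triangles in $p^*$ and $q^*$ pin down enough of the cyclic order of those four lines to force the desired adjacency in the auxiliary plane. Without this projection you cannot decide whether $A\cap B$ separates your two rays, and no amount of local goodness at $v_1$ alone will tell you.

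Second, your parenthetical claim that the common plane carrying $ww'$ ``must be $C$'' is too strong. With $w$ on $p^*,A,C,E$ and $w'$ on $q^*,A,C',F$ (where $E,F$ are the respective fourth planes), the edge $ww'$ lies on $A\cap X$ with $X\in\{C,E\}\cap\{C',F\}$ after one rules out $p^*,q^*$ via no-three-collinear. In the twisted configuration this does \emph{not} yield $C'=C$; a careful elimination (again using no-three-collinear) gives only $E=F$, i.e.\ the two fourth planes coincide. That single coincidence is not yet a contradiction. The paper, arguing by contradiction from the twisted configuration, runs the auxiliary-plane step \emph{twice}---once in a plane through $v_2$ and once (``by symmetry'') in a plane through $v_1$---obtaining two such coincidences which together force two demonstrably distinct planes (in its labels, $p_1$ and $p_{-1}$) to be equal. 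So a single pass through $\Gamma_A$ cannot close the argument: you need the projection idea to establish adjacency, and then a second, symmetric application to manufacture the contradiction.
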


\begin{proof}
Let us suppose that $e$ joins the vertices
$$
p_0^*\cap q_0^*  \cap r_{-1}^* \cap s_1^* \ \ \mathrm{and} \ \ \ p_0^*  \cap q_0^* \cap r_0^* \cap s_0 ^*,
$$ 
and that in the plane $q_0^*$, $e$ is a side of the triangles cut out by $r_0^*$ and $s_1^*$ on one side and $r_{-1}^*$ and $s_0^*$ on the other, see Figure~\ref{notwist}. Furthermore, suppose that $p_{-1}^*$ is the other plane of $S^*$ incident with $q_0^* \cap r_0^* \cap s_1^*$ and that $p_{1}^*$ is the other plane of $S^*$ incident with $q_0^* \cap r_{-1}^* \cap s_0^*$. Note that if $p_1=p_{-1}$ then $p_1 \cap q_0$ would intersect $e$ which it doesn't, so $p_1 \neq p_{-1}$.

\begin{figure}[h]
\centering
\includegraphics[width=4.5 in]{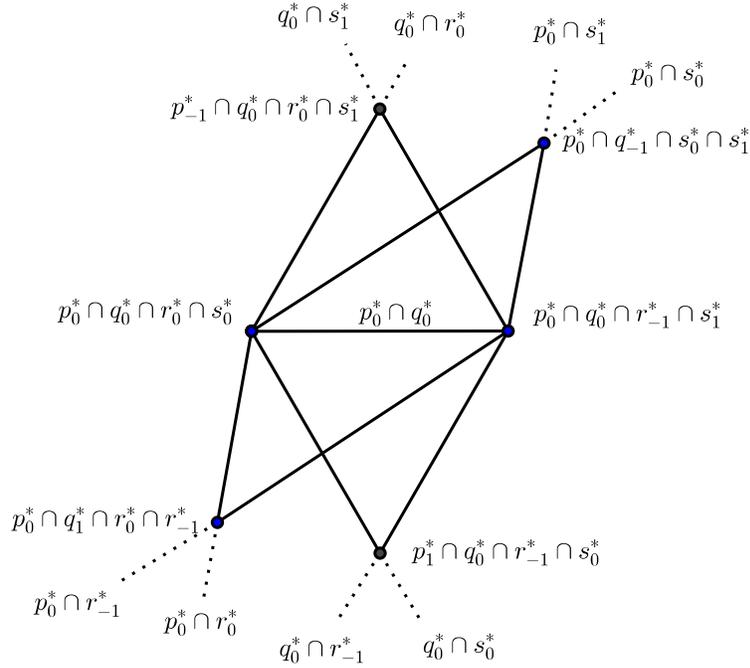}
\caption{The planes around an edge twisting.}
\label{notwist}
\end{figure}

We will prove by contradiction that $r_0^*$ and $s_1^*$ cut out a triangle in $p_0^*$ containing the edge $e$. Suppose that they don't, then without loss of generality we can assume that $r_0^*$ and $r_{-1}^*$ cut out one of the triangles containing $e$ as a side in $p_0^*$, and $s_0^*$ and $s_1^*$ cut out the other, see Figure~\ref{notwist}. Let $q_{1}^*$ be the other plane of $S^*$ incident with $p_0^* \cap r_0^* \cap r_{-1}^*$ and let $q_{-1}^*$ be the other plane of $S^*$ incident with $p_0^* \cap s_{1}^* \cap s_0^*$.

Consider the projection of the space from the point $p_0^*  \cap q_0^* \cap r_0^* \cap s_0 ^*$. The four planes $p_0^*$, $q_0^*$,  $r_0^*$ and  $s_0 ^*$ project onto lines. The plane $r_0^*$ does not cut $p_0^*$ between $q_0^*$ and $s_0^*$, so in the projection the line that $r_0^*$ projects onto, does not cut the projected line $p_0^*$ between $q_0^*$ and $s_0^*$. It does not cut $q_0^*$ between $p_0^*$ and $s_0^*$ either. Therefore, by Figure~\ref{0projection}, it doesn't cut $s_0^*$ between $q_0^*$ and $p_0^*$.

\begin{figure}[h]
\centering
\includegraphics[width=3.9 in]{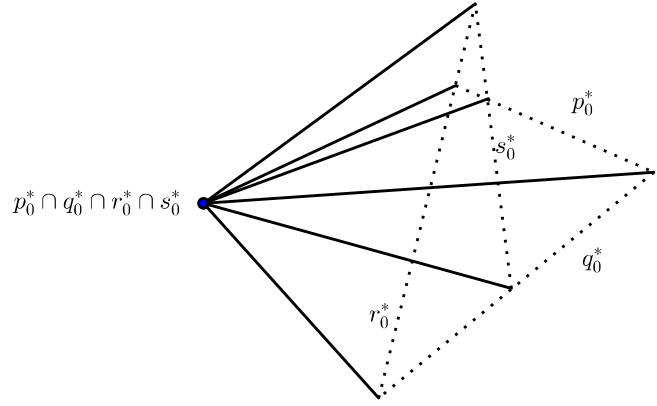}
\caption{The projection of {\rm Figure~\ref{notwist}} from $p_0^*  \cap q_0^* \cap r_0^* \cap s_0 ^*$.}
\label{0projection}
\end{figure}

This implies that the plane $r_0^*$ does not cut the plane $s_0^*$ in a line between the edge joining $p_0^*  \cap q_0^* \cap r_0^* \cap s_0 ^*$ to $p_0^*  \cap q_{-1}^* \cap s_0^* \cap s_1 ^*$ and 
the edge joining $p_0^*  \cap q_0^* \cap r_0^* \cap s_0 ^*$ to $p_1^*  \cap q_0^* \cap r_{-1}^* \cap s_0 ^*$.
Since the edges emanating from $p_0^*  \cap q_0^* \cap r_0^* \cap s_0 ^*$ are good, we deduce that the vertices $p_0^*  \cap q_0^* \cap r_0^* \cap s_0 ^*$, $p_0^*  \cap q_{-1}^* \cap s_0^* \cap s_1 ^*$ and $p_1^*  \cap q_0^* \cap r_{-1}^* \cap s_0 ^*$ form a triangle in $s_0^*$. Thus, $q_{-1}=p_1$.
By symmetry, we also get that $q_{-1}=p_{-1}$ and therefore $p_{-1}=p_1$, a contradiction.
\end{proof}

\begin{figure}[h]
\centering
\includegraphics[width=6 in]{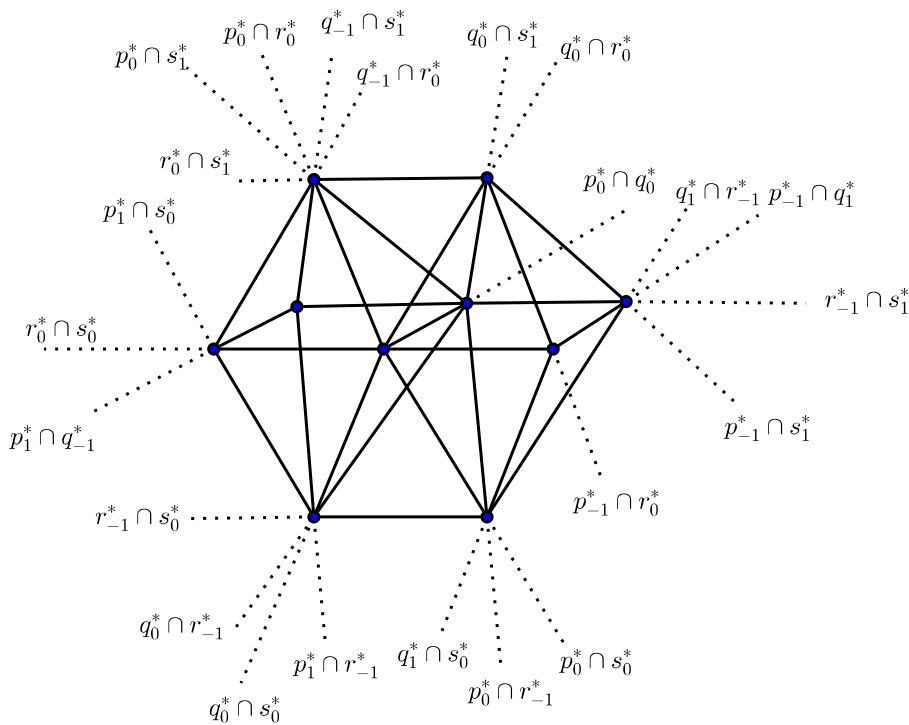}
\caption{The double diamond structure around a rather good edge.}
\label{doublediamond}
\end{figure}

\begin{lemma} \label{doubled}
The local structure of $\Gamma$ around a rather good edge is a tetra-grid containing a double diamond as indicated in Figure~\ref{doublediamond}.
\end{lemma}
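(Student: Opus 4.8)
The plan is to bootstrap from the single rather good edge handled in Lemma~\ref{notwisting} to the full local picture by propagating the ``no twisting'' property around each vertex and then along adjacent edges. First I would fix a rather good edge $e$ on the line $p_0^*\cap q_0^*$, with endpoints $v=p_0^*\cap q_0^*\cap r_0^*\cap s_0^*$ and $v'=p_0^*\cap q_0^*\cap r_{-1}^*\cap s_1^*$ as in the statement of Lemma~\ref{notwisting}, and set up the indexing so that the four planes through each of these vertices are labelled consistently with the additive relation $i+j+k+l=0$. By Lemma~\ref{notwisting}, the triangulation induced on $p_0^*$ and on $q_0^*$ near $e$ agree: on both sides the triangles containing $e$ are cut out by the pairs $\{r_0^*,s_1^*\}$ and $\{r_{-1}^*,s_0^*\}$. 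This is precisely the compatibility needed to start building the diamond.

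Next I would analyse the link of the vertex $v$ (and symmetrically of $v'$). Since $v$ has degree $12$ and every edge at $v$ is good, the $12$ edges at $v$ lie on $\binom{4}{2}=6$ lines $p_0^*\cap q_0^*$, $p_0^*\cap r_0^*$, $p_0^*\cap s_0^*$, $q_0^*\cap r_0^*$, $q_0^*\cap s_0^*$, $r_0^*\cap s_0^*$, two edges on each; and the $8$ triangular faces at $v$ (four in pairs of opposite faces around the vertex figure, which is combinatorially an octahedron) are each the intersection of two of the four planes $p_0^*,q_0^*,r_0^*,s_0^*$ with a fifth plane of $S^*$. Running the projection argument of Lemma~\ref{notwisting} from $v$, applied to each of the six lines through $v$ in turn, forces the fifth plane incident with each such face to be exactly the one predicted by the tetra-grid relation: e.g.\ the face in $p_0^*$ opposite to the one cut by $\{r_0^*,s_0^*\}$ must be cut by $\{r_{-1}^*,s_1^*\}$, because $i+j+k+l$ must balance and the ``no-twisting'' conclusion rules out any other plane. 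Iterating this around $v$ pins down all neighbours of $v$ as $p_{\pm1}^*, q_{\pm1}^*, r_{\pm1}^*, s_{\pm1}^*$ together with their pairwise intersections, i.e.\ the octahedral vertex figure is exactly the one coming from a $3\times3\times3\times3$ tetra-grid centred at $v$. Doing the same at $v'$, and checking that the two octahedra share the common face structure along $e$ guaranteed by Lemma~\ref{notwisting}, glues them into the double diamond depicted in Figure~\ref{doublediamond}.

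Finally I would record that this local configuration \emph{is} a tetra-grid in the sense of the definition: the planes involved are $(p_i^*)_{i\in\{-1,0,1\}}$, $(q_j^*)_{j\in\{-1,0,1\}}$, $(r_k^*)_{k\in\{-1,0,1\}}$, $(s_l^*)_{l\in\{-1,0,1\}}$ (of dimension $3\times3\times3\times3$), and the incidences established above show a quadruple of them meets in a point exactly when $i+j+k+l=0$; the double diamond is the subcomplex of the tetra-grid consisting of the two octahedral stars at the endpoints of the edge $e=\{i=j=0\}$.

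The main obstacle I expect is the second step: verifying that the projection argument of Lemma~\ref{notwisting}, which was carried out for one line and one edge, can be applied uniformly to all six lines through the degree-$12$ vertex without the combinatorial book-keeping of signs and face-orientations going wrong — in particular making sure that ``goodness'' of \emph{every} edge at the vertex (guaranteed by the edge being \emph{rather} good) is genuinely what is needed to force each of the eight faces into the predicted position, and that the two octahedra are oriented compatibly across $e$ rather than, say, reflected. The twisting phenomenon ruled out in Lemma~\ref{notwisting} is exactly the failure mode here, so the argument should go through, but the case analysis of how the six lines and eight faces fit around the octahedral link is where the real work lies.
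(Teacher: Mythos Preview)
Your opening move---applying Lemma~\ref{notwisting} to the rather good edge $e$ to synchronise the triangulations in $p_0^*$ and $q_0^*$---matches the paper exactly. After that, however, the proposal diverges from the paper's argument in a way that creates a genuine gap.

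First, the target structure is misidentified. The double diamond of Figure~\ref{doublediamond} involves only the ten planes $p_{-1}^*,p_0^*,p_1^*,q_{-1}^*,q_0^*,q_1^*,r_{-1}^*,r_0^*,s_0^*,s_1^*$; it is a $3\times 3\times 2\times 2$ tetra-grid, not $3\times 3\times 3\times 3$. In particular there are no planes $r_1^*$ or $s_{-1}^*$ in the picture, so your plan to ``pin down all neighbours of $v$ as $p_{\pm1}^*, q_{\pm1}^*, r_{\pm1}^*, s_{\pm1}^*$'' is asking for more than the lemma asserts and more than the hypothesis delivers. Relatedly, the link of $v$ is not an octahedron with eight faces: each of the four planes through $v$ contributes six triangular sectors at $v$, so the count of faces at $v$ is $24$, not $8$.

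Second, the mechanism you propose---iterating Lemma~\ref{notwisting} along the other edges at $v$---does not go through, because those edges are only good, not rather good (we have no control over the edges at \emph{their} far endpoints). The paper does not iterate Lemma~\ref{notwisting} at all. Instead, after using it once, it looks at two specific triangles: the one in $r_0^*$ with sides cut by $p_0^*,s_0^*$ and the one in $s_1^*$ with sides cut by $p_0^*,r_{-1}^*$. There is a dichotomy: either these two triangles are closed by the same edge on $r_0^*\cap s_1^*$ (Figure~\ref{5carat}) or they are closed by distinct edges, necessarily both cut out by $q_{-1}^*$ (Figure~\ref{6carat}). The first alternative forces $e$ to border a non-triangular face in $p_0^*$, contradicting goodness of $e$; hence the six-vertex diamond of Figure~\ref{6carat} holds, and symmetry (swapping the roles of $p_0\leftrightarrow q_0$, or equivalently reflecting through $e$) yields the second diamond. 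This five-vs-six vertex dichotomy is the step your proposal is missing.
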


\begin{proof}
Let us suppose that $e$ joins the vertices
$$
p_0^*\cap q_0^*  \cap r_{-1}^* \cap s_1^* \ \ \mathrm{and} \ \ \ p_0^*  \cap q_0^* \cap r_0^* \cap s_0 ^*,
$$ 
and that in the plane $q_0^*$, $e$ is a side of the triangles cut out by $r_0^*$ and $s_1^*$ on one side and $r_{-1}^*$ and $s_0^*$ on the other. Furthermore, label $p_{-1}^*$ as the other plane of $S^*$ incident with $q_0^* \cap r_0^* \cap s_1^*$ and $p_{1}^*$ as the other plane of $S^*$ incident with $q_0^* \cap r_{-1}^* \cap s_0^*$. 
\begin{figure}[h]
\centering
\includegraphics[width=3.6 in]{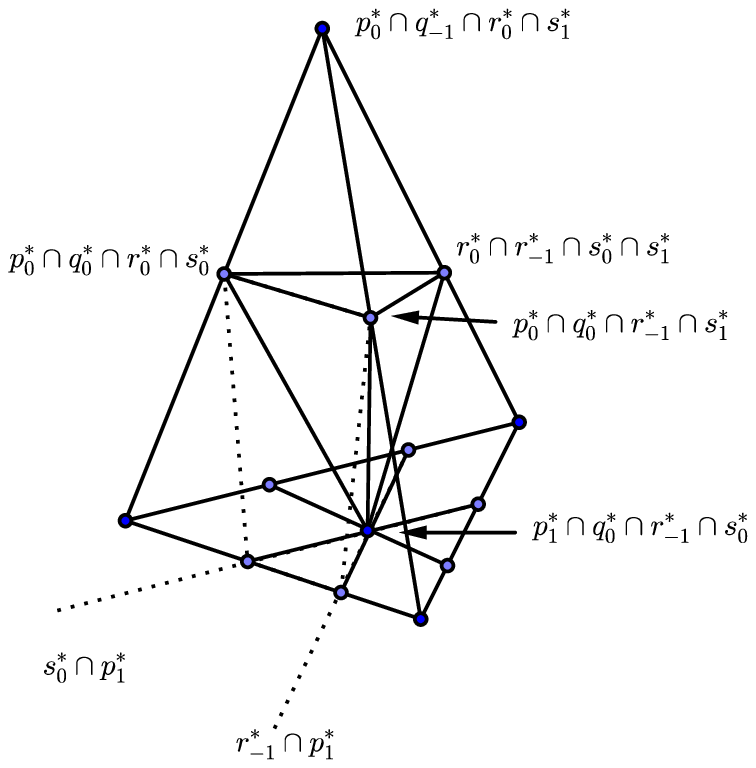}
\caption{The non-occuring diamond structure around a rather good edge.}
\label{5carat}
\end{figure}
By Lemma~\ref{notwisting}, in the plane $p_0^*$, $e$ is a side of the triangles cut out by $r_0^*$ and $s_1^*$ on one side and $r_{-1}^*$ and $s_0^*$ on the other. Let $q_{-1}^*$ be the other plane of $S^*$ incident with $p_0^* \cap r_0^* \cap s_1^*$ and let $q_{1}^*$ be the other plane of $S^*$ incident with $p_0^* \cap r_{-1}^* \cap s_0^*$.
\begin{figure}[h]
\centering
\includegraphics[width=3.6 in]{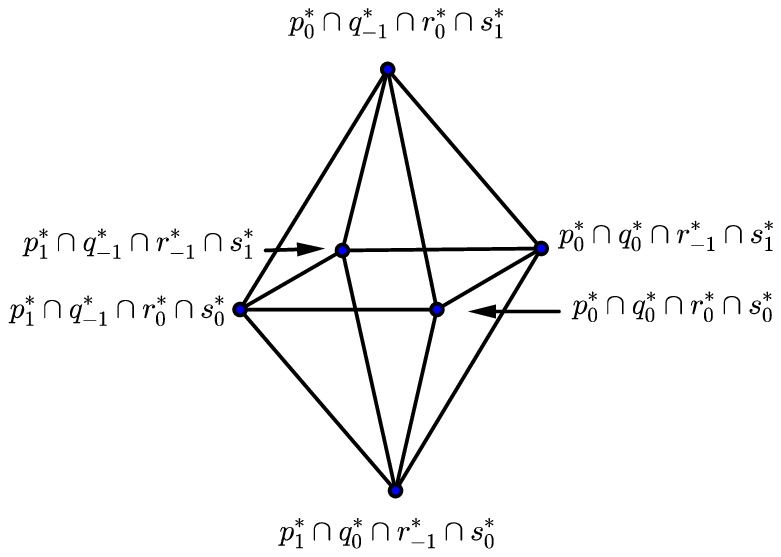}
\caption{The diamond structure around a rather good edge.}
\label{6carat}
\end{figure}
Consider the triangle in $r_0^*$ whose edges are cut out by $p_0^*$ and $s_0^*$ and the triangle in $s_1^*$ whose edges are cut out by $p_0^*$ and $r_{-1}^*$. These triangles are either closed by the same edge cut out by $r_0^* \cap s_1^*$, as in Figure~\ref{5carat}, or different edges which must be both cut out by $q_{-1}^*$, as in Figure~\ref{6carat}. In the former case, we have that $e$ is on a face in the plane $p_0^*$ which is not a triangle, which contradicts the fact that it is good. Hence, we conclude that we obtain a six vertex diamond structure containing the rather good edge $e$, as in Figure~\ref{6carat}, and by symmetry, the double diamond structure in Figure~\ref{doublediamond}.
\end{proof}

\section{The weak structure theorem}

 A key ingredient in the work of Green and Tao in \cite{GT2013} was Chasles' theorem, Theorem~\ref{8gives9}. 

\begin{theorem} \label{8gives9}
Suppose $\{ \ell_1,\ell_2,\ell_3\} $ and $\{ \ell_1^*,\ell_2^*,\ell_3^*\} $ are two sets of three lines and $\{ \ell_i \cap \ell_j^* \ | \ i,j \in \{1,2,3 \}\}$ define nine points of intersection in $\mathrm{PG}_2({\mathbb R})$. Then any cubic curve passing through eight of these points also passes through the ninth.
\end{theorem}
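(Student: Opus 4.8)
The plan is to recognise Theorem~\ref{8gives9} as the degenerate case of the Cayley--Bacharach theorem in which both curves of the complete intersection are triangles, and to prove it by the classical linear-systems argument. Fix linear forms $L_1,L_2,L_3$ and $M_1,M_2,M_3$ defining $\ell_1,\ell_2,\ell_3$ and $\ell_1^*,\ell_2^*,\ell_3^*$ respectively, and set $T_1=L_1L_2L_3$ and $T_2=M_1M_2M_3$. Both $T_1$ and $T_2$ are cubic forms vanishing at all nine points $P_{ij}=\ell_i\cap\ell_j^*$, so every member of the pencil $\langle T_1,T_2\rangle$ is a cubic through all nine points. The whole statement follows once we show that the space of cubic forms vanishing at any prescribed eight of the nine points is exactly this two-dimensional pencil: a cubic $C$ through eight of the points would then lie in $\langle T_1,T_2\rangle$, and since both $T_1$ and $T_2$ vanish at the remaining point, so does $C$.

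First I would note that $T_1$ and $T_2$ are linearly independent. Indeed, if $T_1$ were proportional to $T_2$ we would have $\{\ell_1,\ell_2,\ell_3\}=\{\ell_1^*,\ell_2^*,\ell_3^*\}$, so some $\ell_i$ would coincide with some $\ell_j^*$ and $\ell_i\cap\ell_j^*$ would not be a single point, contradicting the hypothesis that the nine intersections are genuine (and distinct) points. Hence $\dim\langle T_1,T_2\rangle=2$. Since cubic forms in the three homogeneous coordinates of $\mathrm{PG}_2(\mathbb R)$ form a $10$-dimensional vector space and vanishing at a point is one linear condition, the space of cubics through eight given points has dimension at least $2$. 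The crux is therefore to prove that the eight points impose \emph{independent} conditions on cubics, i.e. that this dimension is exactly $2$.

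For this I would exploit the combinatorial structure of the grid together with the fact that the nine points are distinct. Distinctness forces that no three of the six lines $\ell_1,\ell_2,\ell_3,\ell_1^*,\ell_2^*,\ell_3^*$ are concurrent at a grid point, for if, say, $\ell_1,\ell_2,\ell_j^*$ were concurrent then $P_{1j}=P_{2j}$. Consequently any line other than the six given ones meets each ``row'' $\ell_i$ and each ``column'' $\ell_j^*$ in at most one $P_{ij}$, hence contains at most three of the nine points, and similarly any conic contains at most six of them. It is classical that a set of eight points in the plane imposes independent conditions on cubics unless five of them are collinear or all eight lie on a conic, so our eight points impose independent conditions; alternatively, and in a self-contained way, one checks directly that for each of the eight points $P$ there is a reducible cubic through the remaining seven but not through $P$ --- up to the symmetry of the grid there are only two positions of $P$ relative to the omitted ninth point, and in each case one takes the union of two of the six lines with a suitably chosen third line, using distinctness to guarantee that $P$ avoids this cubic.

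Combining these facts, the space of cubics through the eight points has dimension exactly $2$ and therefore equals $\langle T_1,T_2\rangle$; any cubic $C$ passing through eight of the nine points lies in this pencil and hence also passes through the ninth. The argument is valid over any field, in particular over $\mathbb{R}$. The step I expect to need the most care is precisely the independence of the eight conditions: this is where distinctness of the nine points is essential, ruling out the degenerate collinearity and conic configurations in which the conclusion of Cayley--Bacharach genuinely fails.
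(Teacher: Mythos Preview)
The paper does not actually prove Theorem~\ref{8gives9}; it is stated as Chasles' classical result and used only to motivate the analogous Theorem~\ref{7gives8}, which \emph{is} proved in the Appendix. Your argument is correct and is precisely the linear-systems proof that the paper's Appendix codifies in Theorem~\ref{chaslesextend} (there with $k=4$, $t=2$, $r=3$; your situation is $k=3$, $t=3$, $r=2$): exhibit $r$ independent degree-$t$ forms through all the points, and show that any $|S|-1$ of the points impose independent conditions. The only methodological difference is in that last step. For Theorem~\ref{7gives8} the paper chooses explicit coordinates and checks directly that the $7\times 10$ evaluation matrix has rank $7$; you instead invoke the classical characterisation that eight distinct plane points impose independent conditions on cubics unless five are collinear or all eight lie on a conic, and then rule out those degeneracies from the grid structure. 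Both routes are standard and yield the result.

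One small caveat on your parenthetical ``self-contained'' alternative: the claim that for each of the eight points $P$ one can take a union of \emph{three lines} through the remaining seven avoiding $P$ does not quite go through in the case where $P$ shares neither a row nor a column with the omitted ninth point. If the omitted point is $P_{33}$ and $P=P_{11}$, the only products of three of the six grid lines that avoid $P_{11}$ are built from $\ell_2,\ell_3,\ell_2^*,\ell_3^*$, and no three of these cover all of $P_{12},P_{13},P_{21},P_{31}$; introducing an auxiliary line such as $P_{13}P_{31}$ may accidentally pass through $P_{11}$ in special configurations. The fix is easy---use a line times a conic, e.g.\ $\ell_2$ together with a conic through $P_{12},P_{13},P_{31},P_{32}$ missing $P_{11}$ (such a conic exists since a pencil of conics has at most four base points)---so this does not affect the validity of your main argument.
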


Here, the role of Theorem~\ref{8gives9} is played by Theorem~\ref{7gives8}. Theorem~\ref{7gives8} is a special case of the ``eight associated points theorem'', see \cite[Section 93.3]{Pedoe1988}. Since it plays an an important role in this article, we include a proof in the {\sc Appendix}.

\begin{theorem} \label{7gives8}
Suppose $\{ \pi_1,\pi_2 \} $, $\{ \pi_1',\pi_2' \} $  and $\{ \pi_1'',\pi_2'' \}$ are three sets of two planes and $\{ \pi_i \cap \pi_j' \cap \pi''_k \ | \ i,j,k \in \{1,2 \}\}$ define eight points of intersection in $\mathrm{PG}_3({\mathbb R})$. Then any quadric which passes through seven of these points also passes through the eighth.
\end{theorem}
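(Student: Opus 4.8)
The plan is to recast the statement as the existence of a single linear relation, with all eight coefficients nonzero, among the evaluations $Q\mapsto Q(P_{ijk})$, valid for every quadratic form $Q$; this is the Cayley--Bacharach theorem for the complete intersection of three quadrics in $\mathrm{PG}_3(\mathbb{R})$, and I would prove it by elementary linear algebra. Write $\pi_i=V(\ell_i)$, $\pi_j'=V(\ell_j')$, $\pi_k''=V(\ell_k'')$ for linear forms, and set $P_{ijk}=\pi_i\cap\pi_j'\cap\pi_k''$, the eight points, which by hypothesis are distinct. Quadratic forms in four variables form a $10$-dimensional space $W$, and vanishing at a point is one linear condition on $W$. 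If one can produce constants $c_{ijk}$ ($i,j,k\in\{1,2\}$), all nonzero, with $\sum_{i,j,k} c_{ijk}Q(P_{ijk})=0$ for every $Q\in W$, the theorem follows immediately: if a quadric $Q$ vanishes at seven of the eight points, then seven terms of the sum vanish, and the remaining coefficient being nonzero forces $Q$ to vanish at the eighth point too.

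First I would exhibit three quadrics through all eight points, namely the reducible quadrics $\ell_1\ell_2$, $\ell_1'\ell_2'$ and $\ell_1''\ell_2''$: each $P_{ijk}$ lies on one of the two planes in each pair, so each of these quadrics kills all eight points. Next I would check that these three quadrics are linearly independent. The planes $\pi_1$ and $\pi_1'$ are distinct (otherwise $P_{11k}$ would be a line), so $\pi_1\cap\pi_1'$ is a line, and it meets each of $\pi_1''$, $\pi_2''$ in a single point (otherwise two of the $P_{ijk}$ would coincide); a point of $\pi_1\cap\pi_1'$ lying on neither $\pi_1''$ nor $\pi_2''$ is a zero of $\ell_1\ell_2$ and of $\ell_1'\ell_2'$ but not of $\ell_1''\ell_2''$, and by symmetry the three quadrics are independent. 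Hence the evaluation map $W\to\mathbb{R}^8$ has a kernel of dimension at least $3$, its image is a proper subspace of $\mathbb{R}^8$, and so there is a nonzero vector $(c_{ijk})$ with $\sum c_{ijk}Q(P_{ijk})=0$ for all $Q$.

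The remaining, and crucial, point is that every such nonzero $c$ has all coordinates nonzero. Here the key is to test the relation against the reducible quadrics $Q=\ell_a^{(1)}\ell_b^{(2)}$, the product of a defining form of one plane from the first pair and one from the second. Because the eight points are distinct, $\ell_a^{(1)}(P_{ijk})=0$ precisely when $i=a$, and $\ell_b^{(2)}(P_{ijk})=0$ precisely when $j=b$; hence this $Q$ is nonzero exactly at the two points $P_{\bar a,\bar b,1}$ and $P_{\bar a,\bar b,2}$ (where $\bar 1=2$, $\bar 2=1$), which differ only in their third index. Substituting into the relation forces $c_{\bar a\bar b1}$ and $c_{\bar a\bar b2}$ to be simultaneously zero or simultaneously nonzero. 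Repeating with the first and third pairs, and with the second and third, links the coefficients of any two points differing in a single index; the graph on $\{1,2\}^3$ formed by these links is connected, so all eight coefficients vanish together or are all nonzero. Since $(c_{ijk})\neq 0$, they are all nonzero, and the theorem is proved.

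The step I expect to cost the most care is the non-degeneracy bookkeeping: that the bare hypothesis ``the $2^3$ triple intersections are eight distinct points'' genuinely yields $\ell_a^{(1)}(P_{ijk})\neq 0$ whenever $a\neq i$ (else $P_{ijk}$ lies on $\pi_1\cap\pi_2$ and hence equals $P_{\bar i jk}$), the independence of the three reducible quadrics, and the existence of the auxiliary point on $\pi_1\cap\pi_1'$. Each is short, but each invokes the distinctness hypothesis in a slightly different way, so the cleanest write-up pins that hypothesis down at the outset and appeals to it uniformly.
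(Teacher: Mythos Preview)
Your argument is correct and complete. The non-degeneracy checks you flag are exactly the ones needed, and each follows from the distinctness hypothesis as you indicate: $\pi_1\neq\pi_1'$ (else $P_{11k}$ is not a point), the line $\pi_1\cap\pi_1'$ is not contained in $\pi_1''$ or $\pi_2''$ (else $P_{111}$ or $P_{112}$ is not a point), and $\ell_{\bar i}(P_{ijk})\neq 0$ (else $P_{ijk}=P_{\bar i jk}$). The linking argument via the reducible quadrics $\ell_a\ell_b'$, $\ell_a\ell_c''$, $\ell_b'\ell_c''$ is clean and does exactly what you claim.

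Your route is genuinely different from the paper's. The paper proceeds by invoking a general lemma (its Theorem~\ref{chaslesextend}) which reduces the claim to showing that any seven of the eight points impose seven independent conditions on quadrics, i.e.\ $\dim I(S')=3$. It then verifies this by explicit coordinate calculation, splitting into two cases according to whether the three lines $\pi_1\cap\pi_2$, $\pi_1'\cap\pi_2'$, $\pi_1''\cap\pi_2''$ are pairwise skew or pairwise intersecting, writing down a $7\times 10$ (or $8\times 10$) evaluation matrix in each case, and checking its rank. Your approach instead exhibits the Cayley--Bacharach relation directly and shows all eight coefficients are nonzero by evaluating at well-chosen reducible quadrics; it is coordinate-free, avoids the case split entirely, and is shorter. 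What the paper's approach buys is that it also establishes $\dim I(S')=3$ exactly (not merely $\geqslant 3$), which is slightly more information, though not needed for the theorem as stated.
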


In the following lemma we apply Theorem~\ref{7gives8} to the double diamond structure in Lemma~\ref{doubled}. The double diamond is cut out around a rather good edge by ten planes in the dual space. In Figure~\ref{doublediamond}, the ten points dual to these planes are $\{p_{-1},p_0,p_1,q_{-1},q_0,q_1,r_{-1},r_0,s_0,s_1\}$.

\begin{lemma} \label{tenpoints}
The ten points of $S$, which in the dual space cut out the double diamond structure from Lemma~\ref{doubled} around a rather good edge of $\Gamma$, are contained in the intersection of two linearly independent quadrics. 
\end{lemma}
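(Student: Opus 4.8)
The plan is to apply Theorem~\ref{7gives8} repeatedly, starting from the eight points of the first diamond and propagating to the remaining two points of the double diamond. First I would set up coordinates following the tetra-grid description from Lemma~\ref{doubled}: the ten planes are $p_{-1}^*,p_0^*,p_1^*$, $q_{-1}^*,q_0^*,q_1^*$, $r_{-1}^*,r_0^*$, $s_0^*,s_1^*$, and the incidences are exactly those dictated by the condition $i+j+k+l=0$ within the relevant index ranges. Concretely, the vertices of $\Gamma$ appearing in Figure~\ref{doublediamond} are the points $p_i^*\cap q_j^*\cap r_k^*\cap s_l^*$ with $i+j+k+l=0$; listing them, the double diamond consists of the six vertices of one diamond sharing the rather good edge $e=p_0^*\cap q_0^*$ with the six vertices of a second diamond, for a total of ten distinct vertices of $\Gamma$.

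The key step is to produce a suitable configuration of eight points to which the ``seven implies eight'' theorem applies. Consider the three pairs of planes $\{p_{-1}^*,p_1^*\}$, $\{q_{-1}^*,q_1^*\}$ and $\{r_{-1}^*,r_0^*\}$ (or $\{s_0^*,s_1^*\}$, depending on which diamond we work with). Their $2\times2\times2=8$ triple intersections are exactly vertices of $\Gamma$ lying in the double diamond, because the incidence pattern of the tetra-grid forces $p_i^*\cap q_j^*\cap r_k^*$ to be a vertex precisely when there is an $l$ with $i+j+k+l=0$ in range — and for the small index sets here this always holds. So these eight points are ``associated''. Now the ten points of $S$ include the points dual to all these planes; I want to show they all lie on two independent quadrics. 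Take any quadric $Q_1$ through nine of the ten points (a quadric in $\mathrm{PG}_3(\mathbb R)$ has a $9$-dimensional linear system, so nine points always impose at most nine conditions and such a $Q_1$ exists, and generically more than one does); by choosing the nine points to contain one of the associated octuples, Theorem~\ref{7gives8} forces $Q_1$ to pass through the eighth associated point as well, hence — after checking the octuples overlap enough to cover all ten — through all ten. To get two \emph{linearly independent} quadrics through the ten points, I count dimensions: ten points impose at most ten linear conditions on the $10$-dimensional space of quadratic forms, so there is always at least one quadric; the claim is that there are at least two independent ones. This follows because the ten points are \emph{not} in ``general position'' — they satisfy the associated-points relations, so they impose at most $10-1=9$ independent conditions (the eighth point of an associated octuple is dependent on the other seven), leaving a pencil of quadrics.

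The main obstacle I anticipate is the bookkeeping needed to verify that the relevant triples of plane-pairs really do have all eight triple intersections landing among the ten double-diamond vertices, and that by varying which three pairs one picks one can chain the ``eighth point'' conclusions so as to sweep out all ten points rather than just eight — i.e. showing the associated octuples one can form from the ten planes have a connected enough overlap pattern. A secondary subtlety is ruling out the degenerate case where the two quadrics forced through the points are not linearly independent, which is where the precise count ``ten associated points impose only nine conditions'' must be made rigorous, presumably again via Theorem~\ref{7gives8} applied to the excess point. Once the tetra-grid indexing is fixed, though, both points should reduce to finite, explicit verifications on the index sets in Figure~\ref{doublediamond}.
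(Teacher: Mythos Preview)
Your proposal has a fundamental confusion between the primal and dual spaces that makes the argument break down. You form plane-pairs $\{p_{-1}^*,p_1^*\}$, $\{q_{-1}^*,q_1^*\}$, $\{r_{-1}^*,r_0^*\}$ in the \emph{dual} space and consider their eight triple intersections. Those intersections are points of the dual space (at best, vertices of $\Gamma$), and Theorem~\ref{7gives8} applied there constrains quadrics in the dual space through those vertices. The lemma, however, asserts that the ten \emph{points of $S$} --- namely $p_{-1},p_0,p_1,q_{-1},q_0,q_1,r_{-1},r_0,s_0,s_1$ in the original $\mathrm{PG}_3(\mathbb R)$ --- lie on two quadrics. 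Nothing you do in the dual transfers to this conclusion. (Incidentally, your side claim that all eight triple intersections are double-diamond vertices is also false: e.g.\ $p_{-1}^*\cap q_{-1}^*\cap r_{-1}^*$ would require $s_3^*$, which is not in range.)

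The missing idea is to dualise the tetra-grid incidences back: the fact that $p_i^*\cap q_j^*\cap r_k^*\cap s_l^*$ is a point (for $i+j+k+l=0$) means exactly that the four points $p_i,q_j,r_k,s_l$ of $S$ are \emph{coplanar}. Thus the double-diamond data supplies many explicit planes in the primal space, each through four of the ten points, and it is \emph{these} planes one feeds into Theorem~\ref{7gives8}. The paper carries this out cleanly: by dimension count there are two independent quadrics through the eight points $p_{-1},p_0,p_1,q_{-1},q_0,r_{-1},r_0,s_0$; then one exhibits three explicit plane-pairs (e.g.\ the planes through $\{p_0,q_0,r_0,s_0\}$ and $\{p_1,q_{-1},r_{-1},s_1\}$, etc.) whose eight intersection points are seven of those eight together with $s_1$, forcing both quadrics through $s_1$; a second such array forces them through $q_1$. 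Your dimension heuristic ``ten associated points impose only nine conditions'' is not a substitute for this: you need an actual associated octuple among the ten primal points, and that is precisely what the coplanarity information provides.
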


\begin{proof}
Suppose that $\{p_{-1},p_0,p_1,q_{-1},q_0,q_1,r_{-1},r_0,s_0,s_1\}$ are the ten points of $S$ that in the dual space are the ten planes which cut out the double diamond structure around the rather good edge $e$, as in Figure~\ref{doublediamond}. Define six planes $\pi_1$, $\pi_2$, $\pi_1'$, $\pi_2'$, $\pi_1''$, $\pi_2''$ to be the planes which contain the four points, as indicated in the following array.
$$
\begin{array}{||c|c|c|c||}
\hline
\pi_1 & p_0,q_0,r_0,s_0 & \pi_2 & p_1,q_{-1},r_{-1},s_1 \\
\pi_1' & p_0,q_{0},r_{-1},s_1 & \pi_2' & p_1,q_{-1},r_{0},s_0\\
\pi_1''& p_0,q_{-1},r_{0},s_1 & \pi_2'' & p_1,q_{0},r_{-1},s_0\\
\hline
\end{array}
$$

The vector space of homogeneous polynomials of degree two in four variables has dimension $10$. Therefore, there are two linearly independent quadrics containing the eight points $p_{-1},p_0,p_1,q_0,q_{-1},s_0,r_{-1},r_0$. By Theorem~\ref{7gives8}, these quadrics also contain the point $s_1$.

Now redefine the six planes $\pi_1$, $\pi_2$, $\pi_1'$, $\pi_2'$, $\pi_1''$, $\pi_2''$, by the four points they contain, as in the following array.
$$
\begin{array}{||c|c|c|c||}
\hline
\pi_1 & p_0,q_0,r_0,s_0 & \pi_2 & p_{-1},q_{1},r_{-1},s_1 \\
\pi_1' & p_0,q_{0},r_{-1},s_1 & \pi_2' & p_{-1},q_{1},r_{0},s_0\\
\pi_1''& p_0,q_{1},r_{-1},s_0 & \pi_2'' & p_{-1},q_{0},r_{0},s_1\\
\hline
\end{array}
$$

Applying Theorem~\ref{7gives8} to these re-defined planes, we conclude that the quadrics also contain the point $q_1$.

\end{proof}

\begin{lemma} \label{segment}
Suppose that $p^* \cap q^*$ contains a segment $T$ of $m$ rather good edges in $\Gamma$. Then there are two linearly independent quadrics both containing $p$ and $q$ with the property that if $r^*$ intersects a vertex of $T$, for some $r \in S$, then $r$ is also a point of both quadrics.\end{lemma}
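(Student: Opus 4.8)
The plan is to build the two quadrics at one rather good edge of the segment $T$ and then propagate them along $T$, one edge at a time, using that consecutive rather good edges share a double diamond. First, pick a rather good edge $e$ of $T$. By Lemma~\ref{tenpoints}, the ten points of $S$ whose dual planes cut out the double diamond around $e$ lie on two linearly independent quadrics $Q_1,Q_2$; in particular $p$ and $q$ lie on $Q_1\cap Q_2$ (they are among the ten points, being two of the planes through $e$). It remains to show that for every vertex $v$ of $T$ and every plane $r^*$ through $v$, the point $r$ lies on both $Q_1$ and $Q_2$.

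The key step is the propagation. Consider two consecutive rather good edges $e$ and $e'$ of $T$ on the line $p^*\cap q^*$, sharing an endpoint $v$. The double diamond around $e$ (Lemma~\ref{doubled}) and the double diamond around $e'$ overlap: they share the vertex $v$ and all the triangular faces of $\Gamma$ incident with $v$ that lie in $p^*$ or $q^*$. By Lemma~\ref{doubled} the local structure around each of $e,e'$ is a piece of a tetra-grid; matching the two tetra-grid labellings along $v$ shows that the ten points associated to $e'$ share at least eight points with the ten points associated to $e$ — indeed they share the six points of the common diamond in the plane through $v$, plus the shared apexes. Since eight general points in $\mathrm{PG}_3({\mathbb R})$ (here, eight of the ten double-diamond points, which are in sufficiently general position by the tetra-grid structure — no seven of them on a quadric in a degenerate way) determine at most a two-dimensional pencil of quadrics, and the two quadrics $Q_1,Q_2$ from the $e$-diamond pass through these eight common points, the pencil they span is the \emph{same} pencil as the one coming from the $e'$-diamond. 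Hence by Lemma~\ref{tenpoints} applied to $e'$, the remaining two points of the $e'$-diamond also lie on $Q_1\cap Q_2$. Iterating along the $m$ edges of $T$, every point $r$ whose dual plane $r^*$ meets a vertex of $T$ appears in some double-diamond associated to an edge of $T$, so $r\in Q_1\cap Q_2$.

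The main obstacle is controlling the linear algebra of the propagation: I must ensure that the eight points common to consecutive diamonds are in \emph{sufficiently general position} that they span a pencil (a $2$-dimensional space of quadrics) and not a larger family, so that ``the two quadrics through the eight shared points'' is unambiguous and the pencils from $e$ and $e'$ genuinely coincide. This is exactly where the tetra-grid structure from Lemma~\ref{doubled} is essential — it guarantees that the configuration of points near a rather good edge is a genuine three-dimensional grid and not contained in a plane or in a single quadric in a degenerate way, which would make the space of quadrics through eight of them too big. A secondary, more bookkeeping obstacle is to verify that as one moves from the diamond at one end of a shared vertex $v$ to the diamond at the other, the labelling of planes $p_i^*,q_j^*,r_k^*,s_l^*$ can be chosen consistently, i.e. the indices increment in a single coherent direction along $T$; this follows from Lemma~\ref{notwisting} (no twisting) applied at each internal vertex of $T$, but it must be invoked carefully so that the ``$r^*$ through a vertex of $T$'' in the statement are precisely the planes indexed by the tetra-grid and nothing is missed.
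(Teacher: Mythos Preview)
Your overall strategy---build the quadrics at one rather good edge via Lemma~\ref{tenpoints} and then propagate along $T$---matches the paper's. However, your propagation mechanism is different from the paper's and contains a genuine gap that you yourself flag but do not close.

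You argue that consecutive double diamonds along $p^*\cap q^*$ share eight of their ten points, and hence that the pencil spanned by $Q_1,Q_2$ must coincide with the pencil obtained from Lemma~\ref{tenpoints} at the next edge. This only works if those eight shared points impose eight \emph{independent} linear conditions on the ten-dimensional space of quadratic forms, so that the space of quadrics through them is exactly two-dimensional. You assert that the tetra-grid structure ``guarantees that the configuration \ldots\ is a genuine three-dimensional grid and not contained in a plane or in a single quadric in a degenerate way'', but the tetra-grid is a statement about incidences in the \emph{dual} space; it does not directly give you independence of point-evaluation functionals on quadrics in the primal space. Concretely, the eight shared points $p_{-1},p_0,p_1,q_{-1},q_0,q_1,r_{-1},s_1$ satisfy several coplanarities (e.g.\ $\{p_0,q_0,r_{-1},s_1\}$, $\{p_{-1},q_1,r_{-1},s_1\}$, $\{p_1,q_{-1},r_{-1},s_1\}$ are each coplanar, all three planes containing the line through $r_{-1}$ and $s_1$), and you give no argument ruling out that the space of quadrics through them has dimension three or more. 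If it did, the pencil coming from the next edge need not coincide with $\langle Q_1,Q_2\rangle$ and the induction breaks.

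The paper avoids this difficulty entirely. Instead of comparing pencils, it adds the new points $r_{-2}$ and $s_2$ (and then $r_{-3},s_3,\ldots$) \emph{one at a time} by a direct application of Theorem~\ref{7gives8}, the eight associated points theorem. At each step it exhibits three explicit pairs of planes, read off from the tetra-grid coplanarities, whose eight intersection points consist of seven points already known to lie on $V(\psi_1)\cap V(\psi_2)$ together with the single new point; Theorem~\ref{7gives8} then forces the new point onto both quadrics. This uses the tetra-grid exactly where it has content---to supply the required coplanar quadruples---and never needs to know that any particular collection of points imposes independent conditions. If you want to make your pencil-matching argument rigorous you would essentially have to reprove a version of Theorem~\ref{7gives8} for your eight points, so it is simpler and cleaner to invoke it directly as the paper does.
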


\begin{proof}
Let $p_0=p$ and $q_0=q$ and label the vertices in the segment of rather good edges as
$p_0^* \cap q_0^* \cap r_{-j}^* \cap s_j^*$, for $j=0,\ldots,m$.

By Lemma~\ref{doubled}, we get a tetra-grid of dimension $3 \times 3 \times m\times m$ as indicated in Figure~\ref{diamonds}. Note that in Figure~\ref{diamonds}, only half of the double diamond structure is drawn. 
\begin{figure}[h]
\centering
\includegraphics[width=6.6 in]{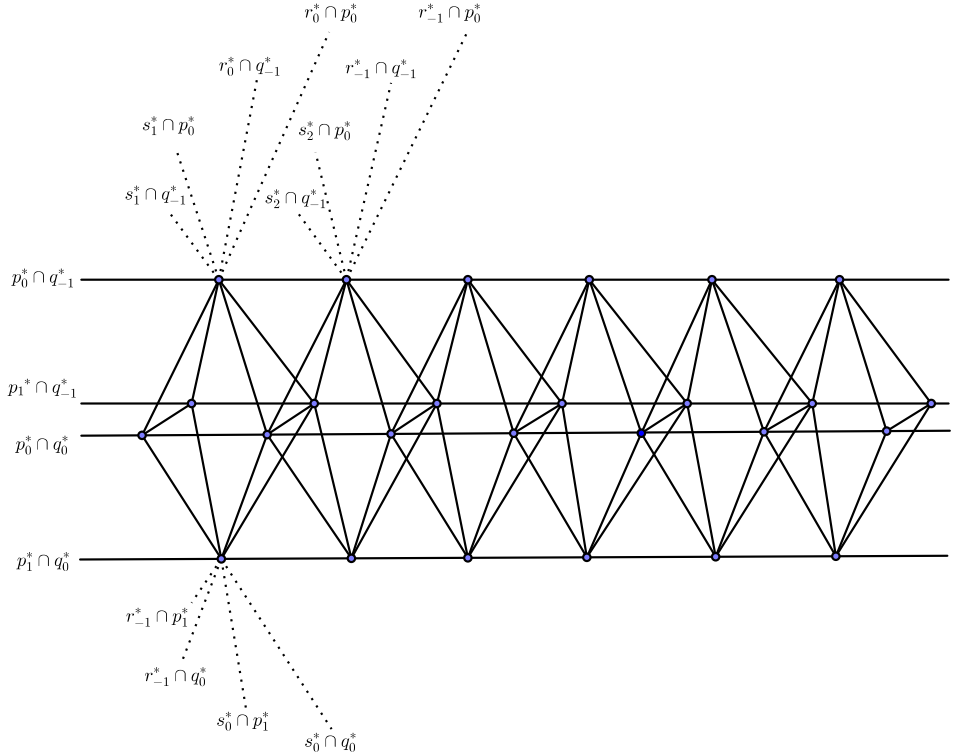}
\caption{The tetra-grid around a segment of rather good edges.}
\label{diamonds}
\end{figure}
By Lemma~\ref{tenpoints}, there are two linearly independent quadrics $\psi_1$ and $\psi_2$ containing the points
$$
p_{-1},p_0,p_1,q_{-1},q_0,q_{1},r_{-1},r_0,s_0,s_1.
$$
Now apply Theorem~\ref{7gives8} with the six planes $\pi_1$, $\pi_2$, $\pi_1'$, $\pi_2'$, $\pi_1''$, $\pi_2''$, defined by the four points they contain as in the following array.
$$
\begin{array}{||c|c|c|c||}
\hline
\pi_1 & p_0,q_1,r_{-2},s_1 & \pi_2 & p_{1},q_{0},r_{-1},s_0 \\
\pi_1' & p_0,q_{1},r_{-1},s_0 & \pi_2' & p_{1},q_{0},r_{-2},s_1\\
\pi_1''& p_0,q_{0},r_{-1},s_1 & \pi_2'' & p_{1},q_{1},r_{-2},s_0\\
\hline
\end{array}
$$
Again the quadrics $\psi_1$ and $\psi_2$ contain seven of the points of intersection, so they contain the eighth point $r_{-2}$ too.
Now, redefine the planes as in the following array.
$$
\begin{array}{||c|c|c|c||}
\hline
\pi_1 & p_0,q_0,r_{-2},s_2 & \pi_2 & p_{-1},q_{1},r_{-1},s_1 \\
\pi_1' & p_0,q_{0},r_{-1},s_1 & \pi_2' & p_{-1},q_{1},r_{-2},s_2\\
\pi_1''& p_0,q_{1},r_{-2},s_1 & \pi_2'' & p_{-1},q_{0},r_{-1},s_2\\
\hline
\end{array}
$$

Since the quadrics $\psi_1$ and $\psi_2$ contain seven of the points of intersection, they contain the eighth point $s_{2}$ too. Continuing in this way we conclude that the quadrics contain all the points $r_{-j}$ and $s_j$, for $j=1,\ldots,m$.
\end{proof}

In the following theorem we make no restriction on $K$, so Theorem~\ref{weak} gives some information for any set of $n$ points, no 3 collinear, spanning $o(n^3)$ ordinary planes. In the following sections we will bound $K$ which will allow us to obtain stronger structural results.

\begin{theorem} \label{weak} {\rm (Weak structure theorem)}
Let $S$ be a set of $n$ points in $\mathrm{PG}_3({\mathbb R})$, no three collinear, and not all co-planar. If there are less than $Kn^2$ ordinary planes, $K \geqslant 1$, then $S$ is contained in the union of $4143K$ varieties, each of which is the intersection of two quadrics.
\end{theorem}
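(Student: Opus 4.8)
The plan is to extract from the graph $\Gamma$ enough ``long'' segments of rather good edges, apply Lemma~\ref{segment} to each, and argue that the resulting intersection-of-two-quadrics varieties cover all of $S$ apart from a controlled deficit, after which the deficient points are absorbed into a few extra such varieties by brute force. First I would use Lemma~\ref{slightlybad}: the number of slightly bad edges is at most $690Kn^2$, so the vast majority of edges are rather good. Since every vertex of $\Gamma$ has degree at most $12$ (we are working among the good vertices, which have degree exactly $12$) and every edge lies on a line $p^*\cap q^*$ that is the dual of a line through two points of $S$, I would count, for each unordered pair $\{p,q\}\subseteq S$, the number of rather good edges of $\Gamma$ on the line $p^*\cap q^*$. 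The total number of such pairs is $\binom n2<\tfrac12 n^2$, so on average each line $p^*\cap q^*$ carries only a bounded number of edges; the point is rather that most lines carry a consecutive run — a \emph{segment} — of many rather good edges, to which Lemma~\ref{segment} applies and forces a whole family of points of $S$ (all the $r$ with $r^*$ meeting a vertex of the segment) onto a common pair of quadrics.

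The key counting step is this. Call a line $p^*\cap q^*$ \emph{rich} if it contains a segment of at least, say, $13$ consecutive rather good edges. If $p^*\cap q^*$ is not rich, then its rather good edges come in runs of length $\le 12$, and each such run is bounded at both ends by a slightly bad edge or by a vertex of degree ${<}12$ (a bad vertex); in either case we can charge the run to a slightly bad edge incident to it. Since each slightly bad edge is incident to at most two lines and bounds at most two runs per line, the total number of runs over all non-rich lines is $O(Kn^2)$, hence the number of rather good edges on non-rich lines is $O(Kn^2)$. On the other hand the \emph{total} number of rather good edges is at least (total edges) $-690Kn^2$, and the total number of edges $|E|$ is, by \eqref{eq2}, at least $\tfrac12\cdot 12\cdot 11\cdot(n - o(n^2\cdot\text{stuff}))$ — more precisely $|E|\ge \tfrac12\sum i(i-1)v_i$ and $v_3$ together with the good-vertex analysis of Lemma~\ref{bad} give $|E|\gtrsim 66 n$ once $K=O(1)$, so in particular $|E|-690Kn^2>0$ for $n$ large whenever $K$ is bounded; but we only assumed $K\ge1$, so I would instead argue directly that there are at least $66n - O(Kn^2)$ rather good edges and that these cannot all sit on non-rich lines unless $Kn^2\gtrsim n$, which is automatic. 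Thus for $n$ large there exist rich lines, and more quantitatively the rich lines account for all but $O(Kn^2)$ edges; each rich line yields (via Lemma~\ref{segment}) a variety $V_{p,q}=\psi_1\cap\psi_2$ which is the intersection of two linearly independent quadrics and which contains $p$, $q$, and every $r\in S$ with $r^*$ through a vertex of the segment.

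Next I would show these varieties are few and cover almost all of $S$. Two rich lines whose segments share a vertex $p_0^*\cap q_0^*\cap r^*\cap s^*$ both produce varieties containing the four points $p_0,q_0,r,s$; by walking along the tetra-grid one sees that overlapping segments force the same pair of quadrics (this is exactly the propagation already used inside the proof of Lemma~\ref{segment}), so the distinct varieties are in bijection with equivalence classes of rich segments under ``connected in the tetra-grid'', and a point $v=p^*\cap q^*\cap r^*\cap s^*$ lying on a rich segment has all of $p,q,r,s$ in one common variety. A point $x\in S$ fails to be covered only if $x^*$ meets no vertex lying on a rich segment; but $x^*$ contains $\binom{n-1}{2}$ vertices of $V$, of which all but $O(Kn)$ are endpoints of rather good edges along rich lines (summing the per-line bounds over the $n-1$ lines $x^*\cap y^*$), so every such $x$ is covered provided $n$ is large — hence \emph{every} point of $S$ is covered. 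Finally, counting the number of distinct varieties: each contributes at least two rather good edges disjointly (the segments are edge-disjoint across classes), and there are at most $2\binom n2$ edges in total, but a sharper count using \eqref{eq1}–\eqref{eq2} and the explicit constants from Lemmas~\ref{bad} and \ref{slightlybad} — each rather good edge forces, via its double diamond, ten named planes, and the bookkeeping of how many segments can be pairwise non-equivalent — yields the bound $4143K$; I would chase the constant by noting $690 + 30 = 720$-type accumulations multiplied through the degree-$12$ branching ($4143 \approx 6\cdot 690 + \text{lower order}$, to be pinned down). The main obstacle I expect is precisely this last quantitative step: organizing the slightly-bad-edge charging so that (a) the number of equivalence classes of rich segments is $O(K)$ with an \emph{explicit} constant, and (b) no point of $S$ escapes all rich segments — both require careful use of the degree-$12$ regularity of good vertices and of the fact that slightly bad edges number only $690Kn^2$ out of $\Theta(n)$ total edges, so the honest difficulty is bounding $|E|$ from below by $\approx 66n$ (equivalently, showing the bad/slightly-bad vertices are a vanishing fraction), which follows from \eqref{eq3} only after one checks $\sum_{i\ge 5} v_i$ and $v_3$ are both $O(Kn^2)=o(n^3)$ so that $v_{12}$ dominates.
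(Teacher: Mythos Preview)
Your approach misses the central simplifying observation and, as a result, wanders into an unworkable global argument. The paper's proof fixes a \emph{single} dual line $p^*\cap q^*$, chosen by pigeonhole so that it carries at most $1380K/(1-1/n)\le 1381K$ slightly bad edges (using Lemma~\ref{slightlybad} and the fact that there are $\binom{n}{2}$ such lines). The crucial point you overlook is that \emph{every} $r\in S\setminus\{p,q\}$ has its dual plane $r^*$ meeting this one line in a vertex of $\Gamma$: three planes in $\mathrm{PG}_3(\mathbb{R})$ always meet in a point, and that point lies on $p^*\cap q^*$. Hence all of $S$ is accounted for by the vertices on this single line. The slightly bad edges cut the line into at most $1381K$ segments of rather good edges; each segment yields one intersection-of-two-quadrics variety by Lemma~\ref{segment}. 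The remaining vertices (endpoints of slightly bad edges on the line, at most $2\cdot1381K$ of them) are handled by an explicit plane-pair construction, giving $2762K$ further varieties, for a total of $4143K$.

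By contrast, your plan tries to organise \emph{all} lines into ``rich'' and ``non-rich'', then merge rich segments into equivalence classes via the tetra-grid. Several steps fail. First, your edge count is off by orders of magnitude: $|E|=\tfrac12\sum_i i(i-1)v_i$ is of order $n^3$ (since $\sum_i\binom{i}{3}v_i=\binom{n}{3}$), not $66n$; this invalidates the comparison with $690Kn^2$ slightly bad edges that you attempt. Second, the claim that overlapping rich segments force the \emph{same} pair of quadrics is not justified: Lemma~\ref{segment} produces \emph{some} pair of quadrics for each segment, but two linearly independent quadrics through ten points need not be unique, so the ``equivalence class'' merging is unfounded. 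Third, your charging of non-rich runs to slightly bad edges does not yield $O(K)$ classes, only $O(Kn^2)$ runs, which is far too many. The fix is not to repair these counts but to abandon the global picture entirely: once you realise one well-chosen line already sees all of $S$, the theorem is almost immediate.
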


\begin{proof}
By Lemma~\ref{slightlybad}, there are at most $690Kn^2$ slightly bad edges in $\Gamma$.

By the pigeon-hole principle, there are points $p,q \in S$, such that $p^*\cap q^*$ contains at most $1380K/(1-1/n)$ slightly bad edges. 
We can assume that $n>4143K$ since otherwise the claim is trivial, and conclude that $p^*\cap q^*$ contains at most $1381K$ slightly bad edges. 

The slightly bad edges split the edges of $\Gamma$ along $p^*\cap q^*$ into segments of rather good edges and vertices incident with slightly bad edges. By Lemma~\ref{segment}, the points whose dual plane is incident with a vertex in the same segment of rather good edges all lie in the same quadric intersection variety. There are at most $1381K$ such segments.

Suppose that $v_0,\ldots,v_{m-1}$ are the vertices on $p^* \cap q^*$ which are the end vertices of some slightly bad edge. Define linear forms $\beta_j$ so that $v_j^*=\ker \beta_j$. Then $v_j$ is on the intersection of plane pair quadrics $\beta_{j-1}\beta_j$ and $\beta_j\beta_{j+1}$, for $j=0,\ldots,m$ (indices read modulo $m$). Therefore, the points of $S$ whose dual plane is incident with a vertex of a slightly bad edge on $p^* \cap q^*$, are contained in the union of $2762K$ varieties, which are the intersection of plane pair quadrics.
\end{proof}

\section{Some lemmas on quadrics and cubics}

In this section we will prove some lemmas on quadrics and cubics which we will need in the next section to prove the main structure theorem.

For any set of homogeneous polynomials $f_1,\ldots,f_r$ in four variables, we denote by
$$
V(f_1,\ldots,f_r),
$$
the set of common zeros of $f_1,\ldots,f_r$, a subset of $\mathrm{PG}_{3}({\mathbb R})$.

Let $\psi_1,\psi_2$ be quadratic forms. Denote by $b_i(X,Y)$ the symmetric bilinear form obtained from the polarisation of $\psi_i$, in other words
$$
b_i(X,Y)=\psi_i(X+Y)-\psi_i(X)-\psi_i(Y),
$$
for $i=1,2$.

For each $p \in V(\psi_1,\psi_2)$, let 
$$
\phi_p(X)=b_1(p,X)\psi_2(X)-b_2(p,X)\psi_1(X).
$$

\begin{lemma} \label{phip}
Let $q \in V(\psi_1,\psi_2)$, for some quadratic forms $\psi_1,\psi_2$. If $y \in V(\phi_q)$ then the line joining $q$ to $y$ is contained in $V(\phi_q)$.
\end{lemma}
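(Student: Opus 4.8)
The plan is to exploit the fact that $\phi_q$ has degree $3$, so that the line $\ell$ joining $q$ and $y$ meets $V(\phi_q)$ in at most three points unless $\ell \subseteq V(\phi_q)$; hence it suffices to exhibit three distinct points of $\ell$ lying on $V(\phi_q)$ (or to show directly that the restriction of $\phi_q$ to $\ell$ is the zero polynomial). First I would parametrise the line as $z(t) = q + t y$ (working affinely on $\ell$, with the point $t=\infty$ corresponding to $y$) and compute $\phi_q(z(t))$ as a cubic polynomial in $t$. Using bilinearity of $b_i$ and the quadratic homogeneity of $\psi_i$, one has $\psi_i(q+ty) = \psi_i(q) + t\, b_i(q,y) + t^2 \psi_i(y)$ and $b_i(q, q+ty) = b_i(q,q) + t\, b_i(q,y) = 2\psi_i(q) + t\, b_i(q,y)$, since $b_i(q,q) = \psi_i(q+q) - 2\psi_i(q) = 4\psi_i(q) - 2\psi_i(q) = 2\psi_i(q)$.

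Next I would substitute these expressions into
$$
\phi_q(z(t)) = b_1(q,z(t))\,\psi_2(z(t)) - b_2(q,z(t))\,\psi_1(z(t))
$$
and expand. Writing $a_i = \psi_i(q)$, $c_i = b_i(q,y)$, $d_i = \psi_i(y)$, this is
$$
(2a_1 + t c_1)(a_2 + t c_2 + t^2 d_2) - (2a_2 + t c_2)(a_1 + t c_1 + t^2 d_1).
$$
Now I invoke the hypotheses: $q \in V(\psi_1,\psi_2)$ gives $a_1 = a_2 = 0$, and $y \in V(\phi_q)$ gives $\phi_q(y) = 0$, which (reading off the leading behaviour as $t \to \infty$, i.e. the coefficient of $t^3$, or equivalently evaluating $\phi_q$ at $y$ directly) gives $c_1 d_2 - c_2 d_1 = 0$. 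With $a_1 = a_2 = 0$ the displayed expression collapses to
$$
t c_1 (t c_2 + t^2 d_2) - t c_2 (t c_1 + t^2 d_1) = t^3(c_1 d_2 - c_2 d_1) = 0.
$$
Hence $\phi_q$ vanishes identically on $\ell$, so $\ell \subseteq V(\phi_q)$.

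The computation is entirely routine; the only point requiring a little care is the bookkeeping of which terms survive after imposing $\psi_1(q) = \psi_2(q) = 0$, and correctly identifying that the condition $y \in V(\phi_q)$ is exactly the vanishing of the coefficient $c_1 d_2 - c_2 d_1$ of $t^3$ (equivalently $\phi_q(y) = b_1(q,y)\psi_2(y) - b_2(q,y)\psi_1(y)$). There is no real obstacle; one should simply note that the conclusion is a statement over $\mathrm{PG}_3(\mathbb{R})$ and the argument is valid over any field, since it only uses polynomial identities and the fact that a nonzero cubic in one variable has at most three roots — here we get the stronger conclusion that the cubic is identically zero.
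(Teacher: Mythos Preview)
Your proof is correct and follows essentially the same route as the paper: both argue by direct computation that $\phi_q$ vanishes identically on the line through $q$ and $y$, using $\psi_i(q)=0$ and the bilinearity of $b_i$. The only cosmetic difference is the choice of parametrisation (you use $q+ty$, the paper uses $y+\lambda q$); the initial framing about a cubic having at most three roots is unnecessary, since your expansion already shows the restriction is the zero polynomial.
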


\begin{proof}
We have to show that
$$
\phi_q(y+\lambda q)=0,
$$
for all $\lambda \in {\mathbb R}$, whenever $y \in V(\phi_q)$.

By direct calculation, observing that $b_1(q,q)=b_2(q,q)=0$,
$$
\phi_q(y+\lambda q)=b_1(q,y+\lambda q)\psi_2(y+\lambda q)-b_2(q,y+\lambda q)\psi_1(y+\lambda q)
$$
$$
=b_1(q,y)(\psi_2(y)+b_2(y,\lambda q))-b_2(q,y)(\psi_1(y)+b_1(y,\lambda q))=\phi_q(y)=0.
$$
\end{proof}

Lemma~\ref{phip} implies that $V(\phi_q)$ is a degenerate cubic surface which is a cone with vertex $q$ of a cubic curve in the projective plane obtained by projecting the space from $q$.

Let $\phi$ be a homogeneous polynomial in four variables. The tangent plane at a point $p$ of the surface $V(\phi)$, is
$$
\sum_{i=1}^4 \frac{d\phi}{dX_i} (p)X_i.
$$

Let $q$ be a point of $V(\psi_1,\psi_2)$ and suppose that $p\in V(\phi_q)$. We denote by  $t_p^q(X)$, the tangent plane to $V(\phi_q)$ at $p$.

Let $p$ be a point of $V(\psi_1,\psi_2)$ for which $\ker b_1(p,X) \neq \ker b_2(p,X)$ and define a line $\ell_p=\ker b_1(p,X) \cap \ker b_2(p,X)$.

\begin{lemma} \label{tangentproject}
If $p,q \in V(\psi_1,\psi_2)$ and $\ker b_1(p,X) \neq \ker b_2(p,X)$ then the line $\ell_p$ is contained in $t_p^q(X)$ and $V(\phi_p)$.
\end{lemma}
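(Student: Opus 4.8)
The plan is to prove both containments by a direct computation, exploiting the defining property of $\ell_p$: every point $y$ on $\ell_p$ satisfies $b_1(p,y)=0$ and $b_2(p,y)=0$, and I will arrange the relevant polynomials so that these two scalars appear as factors.

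First I would check that $t_p^q$ is legitimately defined in this situation, i.e. that $p\in V(\phi_q)$: since $p\in V(\psi_1,\psi_2)$ we have $\psi_1(p)=\psi_2(p)=0$, so $\phi_q(p)=b_1(q,p)\psi_2(p)-b_2(q,p)\psi_1(p)=0$. The containment $\ell_p\subseteq V(\phi_p)$ is then immediate from $\phi_p(X)=b_1(p,X)\psi_2(X)-b_2(p,X)\psi_1(X)$: for $y\in\ell_p$ both $b_1(p,y)$ and $b_2(p,y)$ vanish, hence $\phi_p(y)=0$.

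For the tangent plane I would compute $t_p^q(y)=\sum_{i=1}^4\frac{d\phi_q}{dX_i}(p)\,y_i$ as the coefficient of $t$ in the one-variable polynomial $\phi_q(p+ty)$. Using bilinearity of $b_1$ and $b_2$ and the expansions $\psi_i(p+ty)=t\,b_i(p,y)+t^2\psi_i(y)$ (valid since $\psi_i(p)=0$), one obtains
$$\phi_q(p+ty)=\bigl(b_1(q,p)+t\,b_1(q,y)\bigr)\bigl(t\,b_2(p,y)+t^2\psi_2(y)\bigr)-\bigl(b_2(q,p)+t\,b_2(q,y)\bigr)\bigl(t\,b_1(p,y)+t^2\psi_1(y)\bigr),$$
whose coefficient of $t$ is $t_p^q(y)=b_1(q,p)\,b_2(p,y)-b_2(q,p)\,b_1(p,y)$. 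For $y\in\ell_p$ this equals $b_1(q,p)\cdot 0-b_2(q,p)\cdot 0=0$, so $y$ lies on the plane $t_p^q(X)$; since $\ell_p$ is exactly the locus of points annihilated by $b_1(p,\cdot)$ and $b_2(p,\cdot)$, this gives $\ell_p\subseteq t_p^q(X)$. (If $p$ happens to be a singular point of $V(\phi_q)$, then $t_p^q\equiv 0$ and the containment is trivial.)

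There is no serious obstacle here; the only point requiring a little care is to justify that the coefficient of $t$ in $\phi_q(p+ty)$ really is the value at $y$ of the tangent linear form $t_p^q(X)=\sum_i\frac{d\phi_q}{dX_i}(p)X_i$, which is just the chain rule for $t\mapsto\phi_q(p+ty)$ at $t=0$.
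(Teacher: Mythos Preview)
Your proof is correct and follows essentially the same approach as the paper. Both arguments arrive at the identity $t_p^q(y)=b_1(q,p)\,b_2(p,y)-b_2(q,p)\,b_1(p,y)$ and then observe it vanishes on $\ell_p$; the only cosmetic difference is that the paper computes the tangent form via the product rule for $\sum_i\frac{d\phi_q}{dX_i}(p)X_i$ and afterwards kills the $\psi_1(p),\psi_2(p)$ terms, whereas you extract the linear-in-$t$ coefficient of $\phi_q(p+ty)$ having already used $\psi_i(p)=0$ in the expansion.
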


\begin{proof}
Since $\psi_i$ is a quadratic form, the tangent plane to the quadric $V(\psi_i)$ at $p$ is $\ker b_i(p,x)$. The tangent plane at the point $p$ is
$$
t_p^q(X)=\sum_i \frac{d \phi_q}{dX_i}(p)X_i=\sum_i b_1(q,p)\frac{d \psi_2}{dX_i}(p)X_i-b_2(q,p)\frac{d \psi_1}{dX_i}(p)X_i
$$
$$
+\psi_2(p)\frac{d b_1(q,X)}{dX_i}(p)X_i-\psi_1(p)\frac{d b_2(q,X)}{dX_i}(p)X_i
$$
$$
=b_1(q,p) b_2(p,X)-b_2(q,p)b_1(p,X)+\psi_2(p)b_1(q,X)-\psi_1(p) b_2(q,X).
$$
Since $p$ is a point of $V(\psi_1,\psi_2)$, we have that $t_p^q(x)=0$, for all $x \in \ell_p$.

It is immediate from the definitions that $\ell_p$ is contained in $V(\phi_p)$.

\end{proof}

Define
$$
\phi_{pq}(X)=b_1(p,X)b_2(q,X)-b_1(q,X)b_2(p,X).
$$

\begin{lemma} \label{thisorthat}
Suppose $p,q \in V(\psi_1,\psi_2)$. If $r \in V(\phi_p,\phi_q)$ and $r \not\in V(\phi_{pq})$ then $r \in V(\psi_1,\psi_2)$.
\end{lemma}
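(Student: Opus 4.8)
The plan is to work with the two cubic forms $\phi_p$ and $\phi_q$ directly, exploiting the algebraic identity that relates them to the quadratic forms $\psi_1,\psi_2$ and the ``mixed'' cubic $\phi_{pq}$. Recall that by definition
$$
\phi_p(X)=b_1(p,X)\psi_2(X)-b_2(p,X)\psi_1(X),\qquad
\phi_q(X)=b_1(q,X)\psi_2(X)-b_2(q,X)\psi_1(X).
$$
Treating this as a linear system in the two unknowns $\psi_1(X)$ and $\psi_2(X)$, the determinant of the coefficient matrix
$$
\begin{pmatrix} -b_2(p,X) & b_1(p,X) \\ -b_2(q,X) & b_1(q,X)\end{pmatrix}
$$
is precisely $-\bigl(b_2(p,X)b_1(q,X)-b_1(p,X)b_2(q,X)\bigr)=\phi_{pq}(X)$. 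First I would solve this system by Cramer's rule to obtain the two identities
$$
\phi_{pq}(X)\,\psi_1(X)=b_1(q,X)\,\phi_p(X)-b_1(p,X)\,\phi_q(X),
$$
$$
\phi_{pq}(X)\,\psi_2(X)=b_2(q,X)\,\phi_p(X)-b_2(p,X)\,\phi_q(X),
$$
which hold identically as polynomials in $X$ (they are just the expansions $b_1(q,X)\phi_p(X)-b_1(p,X)\phi_q(X)=\bigl(b_1(q,X)b_1(p,X)-b_1(p,X)b_1(q,X)\bigr)\psi_2(X)-\bigl(b_1(q,X)b_2(p,X)-b_1(p,X)b_2(q,X)\bigr)\psi_1(X)$ and similarly for the second). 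I would verify these by the same direct expansion used in the proof of Lemma~\ref{phip}.

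Now the conclusion is essentially immediate. Suppose $r\in V(\phi_p,\phi_q)$, so $\phi_p(r)=\phi_q(r)=0$, and suppose $r\notin V(\phi_{pq})$, i.e. $\phi_{pq}(r)\neq 0$. Evaluating the two identities above at $X=r$, the right-hand sides vanish, so $\phi_{pq}(r)\psi_1(r)=0$ and $\phi_{pq}(r)\psi_2(r)=0$. Since $\phi_{pq}(r)\neq 0$ is a nonzero real number, we may divide to get $\psi_1(r)=\psi_2(r)=0$, that is, $r\in V(\psi_1,\psi_2)$, as required.

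The only thing to be careful about is the polynomial identity step: one must make sure the Cramer's-rule manipulation is valid at the level of formal polynomials (it is, since we never divide by $\phi_{pq}$ until after evaluating at $r$), and one should double-check the signs in the definitions of $\phi_p$, $\phi_q$ and $\phi_{pq}$ so that the cross-terms cancel correctly. I do not anticipate any real obstacle here; the result is a purely formal consequence of the definitions, and the hypotheses $p,q\in V(\psi_1,\psi_2)$ are not even needed for the identities themselves (they enter only through the roles these forms play in the surrounding argument). The one substantive point is simply that working over ${\mathbb R}$ — or indeed any integral domain — the nonvanishing of $\phi_{pq}(r)$ lets us cancel it, which is where the hypothesis $r\notin V(\phi_{pq})$ is used.
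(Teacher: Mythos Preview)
Your proof is correct and is essentially the same argument as the paper's: the paper simply evaluates the two defining equations $\phi_p(r)=0$ and $\phi_q(r)=0$ at $r$, views them as a homogeneous linear system in $\psi_1(r),\psi_2(r)$ with determinant $\phi_{pq}(r)$, and concludes $\psi_1(r)=\psi_2(r)=0$. Your Cramer's-rule identities are exactly this computation written out as polynomial identities before evaluation, so the two proofs are the same up to presentation; your observation that $p,q\in V(\psi_1,\psi_2)$ is not actually used in the algebra is also correct.
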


\begin{proof}
Since $r \in V(\phi_p,\phi_q)$, we have
$$
b_1(p,r)\psi_2(r)-b_2(p,r)\psi_1(r)=0
$$
and
$$
b_1(q,r)\psi_2(r)-b_2(q,r)\psi_1(r)=0.
$$
Therefore, $\phi_{pq}(r)\psi_2(r)=0$ and $\phi_{pq}(r)\psi_1(r)=0$.
\end{proof}

We will need the following lemmas in the proof of Lemma~\ref{typeone}.

A bilinear form $b(X,Y)$ is {\em degenerate} at some point $u$, if $b(X,u)=0$. If the polariation of a quadratic form $\phi$ is degenerate at some point $u$, then we say that the quadratic form $\phi$ is also {\em degenerate} at $u$.

\begin{lemma} \label{2degen}
Let $b$ be the polarisation of a quadratic form $\psi$. If $b(X,Y)$ is degenerate at two points, $u$ and $v$ say, and $w$ is a point of $V(\psi)$, not on the line spanned by $u$ and $v$, then the plane spanned by $u$, $v$ and $w$ is in $V(\psi)$. Hence $V(\psi)$ consists of two planes or one repeated plane.
\end{lemma}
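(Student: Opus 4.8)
The plan is to show directly that $\psi$ vanishes on the whole plane $\Pi$ spanned by $u$, $v$ and $w$, and then to factor $\psi$ into linear forms. First I would record the consequences of the two degeneracy hypotheses. Since $b(X,u)=0$ for all $X$, taking $X=u$ gives $b(u,u)=0$; but $b(u,u)=\psi(2u)-2\psi(u)=2\psi(u)$, so $\psi(u)=0$, and likewise $\psi(v)=0$, whence $u,v\in V(\psi)$. Taking $X=v$ and $X=w$ in $b(X,u)=0$ gives $b(u,v)=0$ and $b(u,w)=0$, and taking $X=w$ in $b(X,v)=0$ gives $b(v,w)=0$.

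Next I would evaluate $\psi$ at a general point $\alpha u+\beta v+\gamma w$ of $\Pi$, using the polarisation identity $\psi(X+Y)=\psi(X)+\psi(Y)+b(X,Y)$ and the bilinearity of $b$. Expanding gives $\psi(\alpha u+\beta v+\gamma w)=\alpha^2\psi(u)+\beta^2\psi(v)+\gamma^2\psi(w)+\alpha\beta\, b(u,v)+\alpha\gamma\, b(u,w)+\beta\gamma\, b(v,w)$, and by the previous paragraph every term on the right vanishes — the three square terms because $u,v,w\in V(\psi)$, and the three cross terms by degeneracy. Hence $\psi$ is identically zero on $\Pi$, i.e. $\Pi\subseteq V(\psi)$.

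For the final assertion I would invoke the standard fact that a homogeneous polynomial vanishing identically on a hyperplane $V(L)$ is divisible by $L$: choosing coordinates in which $L=X_4$, the restriction of $\psi$ to $X_4=0$ is the sum of the monomials of $\psi$ not involving $X_4$, so these coefficients are zero and $X_4\mid\psi$. Applying this with $L$ a linear form cutting out $\Pi$ yields $\psi=LM$ for some linear form $M$, so that $V(\psi)=V(L)\cup V(M)$: two distinct planes when $M$ is not proportional to $L$, and a single repeated plane otherwise.

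The argument is essentially a short computation, so I do not expect a real obstacle; the only points needing a little care are checking that the expansion of $\psi(\alpha u+\beta v+\gamma w)$ leaves no surviving term (in particular that the $w$-diagonal contribution is exactly $\gamma^2\psi(w)$, using $b(w,w)=2\psi(w)$), and observing that it is precisely the hypothesis that such a point $w\in V(\psi)$ off the line $\langle u,v\rangle$ exists which excludes the degenerate possibility $V(\psi)=\langle u,v\rangle$ (as occurs for $\psi=X_3^2+X_4^2$), so that the stated dichotomy is correct under exactly these hypotheses.
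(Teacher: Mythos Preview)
Your proof is correct and follows the same approach as the paper: a direct expansion of $\psi$ on the plane spanned by $u$, $v$, $w$ using the polarisation identity together with the vanishing of $b(\cdot,u)$ and $b(\cdot,v)$. The paper's version is terser (it writes the generic point as $w+\lambda u+\mu v$ and leaves implicit both the verification that $\psi(\lambda u+\mu v)=0$ and the factorisation argument for the final sentence), but the substance is identical.
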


\begin{proof}
This is direct calculation. For all $\lambda, \mu \in {\mathbb R}$,
$$
\psi(w+\lambda u + \mu v)=b(w,\lambda u + \mu v)+\psi(w)+\psi(\lambda u+\mu v)=\lambda b(w,u)+\mu b(w,v)=0.
$$
\end{proof}

\begin{lemma} \label{theRlemma}
Let $\psi_1$ and $\psi_2$ be linearly independent quadratic forms, whose polarisations are $b_1$ and $b_2$ respectively. If $R$ is a subset of $V(\psi_1,\psi_2)$ spanning the whole space, with the property that there are at least five points, no three collinear, in $U=\cap_{r,q \in R} V(\phi_{rq})$ then one of the following holds.
\begin{enumerate}
\item[(i)]
There is a point $s$ where both $b_1$ and $b_2$ are degenerate.
\item[(ii)]
$R$ is contained in two (possible degenerate) planar conics.
\item[(iii)]
$U$ is contained in a conic.
\end{enumerate}
\end{lemma}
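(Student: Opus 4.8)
The plan is to analyse the quadratic forms $\phi_{rq}$ jointly as $r,q$ range over $R$, using the fact that $R$ spans the whole space. Write $\phi_{rq}(X)=b_1(r,X)b_2(q,X)-b_1(q,X)b_2(r,X)$; this is bilinear in $(r,q)$ and alternating, so for fixed $X$ the map $(r,q)\mapsto \phi_{rq}(X)$ is an alternating bilinear form on the span of $R$, i.e. on all of ${\mathbb R}^4$. Equivalently, for each $X$ we get the $2\times 2$ minors of the $2\times 4$ matrix $M(X)$ whose rows are $b_1(\cdot,X)$ and $b_2(\cdot,X)$ (thought of as linear functionals of the first argument). So $U=\bigcap_{r,q\in R}V(\phi_{rq})$ is exactly the set of $X$ for which $M(X)$ has rank at most $1$: either $b_1(\cdot,X)$ and $b_2(\cdot,X)$ are proportional as linear functionals, or one of them vanishes identically (the latter being case (i), a point $s$ where both $b_1$ and $b_2$ are degenerate — note $b_1(s,X)\equiv 0$ and $b_2(s,X)\equiv 0$ means exactly $X=s$ is degenerate for both). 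First I would dispose of case (i): if some point of $U$ makes both functionals vanish we are done, so assume from now on that at every point $u\in U$ the functional $b_1(u,\cdot)$ is a nonzero scalar multiple $\lambda(u)b_2(u,\cdot)$ (or symmetrically $b_2=\mu b_1$), and in particular the matrix pencil $b_1-\lambda b_2$ is singular at $u$.

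Next I would extract the key consequence of rank $\leqslant 1$: at each $u\in U$ there is $\lambda=\lambda(u)$ with $(b_1-\lambda b_2)(u,X)=0$ for all $X$, i.e. $u$ lies in the kernel of the (symmetric) form $b_1-\lambda b_2$. This says the pencil $\{\psi_1-\lambda\psi_2\}$ has a point in its singular locus lying on $U$, for each $u\in U$. Since we have at least five points of $U$ in general position (no three collinear), I would run the following dichotomy on the function $\lambda:U\to{\mathbb R}\cup\{\infty\}$. If $\lambda$ takes the same value $\lambda_0$ at three non-collinear points $u_1,u_2,u_3$ of $U$, then the quadratic form $\psi_1-\lambda_0\psi_2$ is degenerate at each $u_i$; applying Lemma~\ref{2degen} to (any two of) these shows $V(\psi_1-\lambda_0\psi_2)$ is a pair of planes (or a double plane), and then $R\subseteq V(\psi_1,\psi_2)\subseteq V(\psi_1-\lambda_0\psi_2)$ forces $R$ to lie in the union of two planes; intersecting with either $\psi_1$ shows $R$ lies in two planar conics, which is case (ii). (I should also handle the degenerate sub-case $\lambda_0=\infty$, i.e. $b_1$ degenerate at three non-collinear points, symmetrically, and the sub-case where $b_2=\mu b_1$ with a repeated value of $\mu$.) In the remaining case $\lambda$ takes at least — counting the five general-position points — three distinct values among any three points, so in particular $\lambda$ is non-constant and takes many values on $U$.

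The remaining case is the one I expect to be the main obstacle: showing that if $\lambda$ is suitably non-constant on $U$ then $U$ is contained in a conic (case (iii)). Here the idea is that $u\in\ker(b_1-\lambda(u)b_2)$ combined with $\psi_1(u)=\psi_2(u)=0$ — wait, actually points of $U$ need not lie on $V(\psi_1,\psi_2)$, only points of $R$ do, so I must be careful. Instead I would argue as follows: for $u\in U$, $b_1(u,\cdot)\parallel b_2(u,\cdot)$ means the tangent planes to $V(\psi_1)$ and $V(\psi_2)$ at $u$ coincide (when $u$ is a smooth point of both), so $U$ is contained in the locus where the two quadrics are tangent, which is classically $V(\psi_1,\psi_2)$ together with a residual curve cut out on each quadric; more concretely, $\lambda(u)$ is a regular (rational) function on $U$ determined by $\lambda(u)=b_1(u,x)/b_2(u,x)$ for any $x$ with $b_2(u,x)\neq0$, hence $b_1(u,x)b_2(u,y)-b_1(u,y)b_2(u,x)=0$ for \emph{all} $x,y$ — but that is just $\phi$-type relations again. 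The clean way: fix a generic point $x_0$; then on $U$ we have $b_1(u,x_0)b_2(u,X)=b_2(u,x_0)b_1(u,X)$ identically in $X$, which after choosing $X=x_0$ is vacuous, but choosing $X$ to run over a basis gives that the vector $(b_2(u,x_0)b_1(u,e_i)-b_1(u,x_0)b_2(u,e_i))_i$ vanishes, i.e. $u$ satisfies the single quadratic equation $\phi_{x_0}$-like condition. Concretely, $b_2(u,x_0)\,b_1(u,X)-b_1(u,x_0)\,b_2(u,X)$, as a function of $u$, is a quadratic form in $u$ for each fixed $X$ and $x_0$; picking two independent linear functionals of $X$ yields two quadrics vanishing on $U$, and picking a clever combination (or using that five general points pin down at most a pencil of conics in a plane, hence a unique conic if $U$ is planar, or forcing $U$ planar) concludes that $U$ lies on a conic. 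I would make this precise by noting the five points, no three collinear, of $U$ together with the quadric relations just derived force $U$ into a plane (five general points span at most a plane only if they are coplanar — so first I must argue $U$ is planar, perhaps because $\phi_{x_0}$ and $\phi_{x_1}$ for two choices already cut $U$ down to a curve of low degree through five general points, and then degree considerations via Bézout give a line component or coplanarity), and then the quadratic relation restricted to that plane is a conic containing the five non-collinear points and hence all of $U$. The delicate bookkeeping — ruling out that these derived quadrics are identically satisfied, and upgrading "$U$ lies on a low-degree curve" to "$U$ lies on a conic" using the five-points-no-three-collinear hypothesis — is where the real work lies, and I would lean on Bézout's theorem and the classification of intersections of two quadrics to control it.
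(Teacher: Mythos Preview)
Your rank-at-most-one reformulation of $U$ is correct and is essentially what the paper uses: $s\in U$ iff the linear functionals $b_1(\cdot,s)$ and $b_2(\cdot,s)$ are proportional, so there is a well-defined $\lambda(s)\in\mathbb{R}\cup\{\infty\}$ with $(b_1-\lambda(s)b_2)(s,\cdot)\equiv 0$. Your route to (i) is fine, and your route to (ii) via ``$\lambda$ equal on three non-collinear points of $U$ $\Rightarrow$ $\psi_1-\lambda_0\psi_2$ degenerate at three points $\Rightarrow$ plane pair'' is valid and close in spirit to the paper's.

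The genuine gap is your treatment of (iii). You acknowledge this yourself, but the suggested approaches (tangent-plane heuristics, B\'ezout, ``five general points determine a conic'') do not work: the five given points lie on \emph{some} conic trivially, but you need a single conic containing \emph{all} of $U$, and nothing you wrote produces one. The paper's argument is quite different from your sketch. Choose points $e_1,\ldots,e_k$ of $U$ that are linearly independent with a further point $u=\sum_{i=1}^k u_ie_i\in U$ in their span (the no-three-collinear hypothesis forces $k\geqslant 3$). Expanding $b_1(X,u)=\lambda_u b_2(X,u)$ using $b_1(X,e_i)=\lambda_i b_2(X,e_i)$ gives the key relation
\[
\sum_{i=1}^k u_i(\lambda_i-\lambda_u)\,b_2(X,e_i)=0.
\]
Now the dichotomy is on the degeneracy of $b_2$, not on how many values $\lambda$ takes. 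If $b_2$ is non-degenerate this forces $\sum u_i(\lambda_i-\lambda_u)e_i=0$, hence all $\lambda_i=\lambda_u$, which for $k=4$ contradicts linear independence of $\psi_1,\psi_2$ and for $k=3$ gives (ii). If $b_2$ is degenerate at (at most) one point $s$, then $\sum u_i(\lambda_i-\lambda_u)e_i$ is proportional to $s$; for $k=4$ one checks $b_1$ is also degenerate at $s$, giving (i). The only surviving case is $k=3$ with $b_2$ degenerate at a unique $s$: then for \emph{every} $v\in U$ the vector $\sum v_i(\lambda_i-\lambda_v)e_i$ must be proportional to the same $s$, and eliminating the unknown $\lambda_v$ from the two resulting proportionality equations yields an explicit quadratic in $v_1,v_2,v_3$ (vanishing also at $e_1,e_2,e_3,u$). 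That quadratic is the conic in (iii). Your proposal never arrives at this relation, and without it there is no mechanism to pin down all of $U$.
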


\begin{proof}
Suppose that $b_2$ is degenerate at two points. By Lemma~\ref{2degen}, $V(\psi_2)$ consists of two (possibly identical) planes $\pi_1$ and $\pi_2$. Since $R$ is contained in $V(\psi_2)$ and spans the whole space, $\pi_1 \neq \pi_2$. Since $R$ is contained in $V(\psi_1,\psi_2)$ and spans the whole space, $V(\psi_1)$ does not intersect $V(\psi_2)$ in $\pi_1$ or $\pi_2$. Hence $V(\psi_1)$ intersects $\pi_i$ in a (possibly degenerate) conic. Since $R$ is contained in $V(\psi_1,\psi_2)$, we have (ii).

We will assume from now on that $b_2$ is degenerate at at most one point.

Suppose there is a point $s \in U$ for which $\ker b_1(X,s) \neq \ker b_2(X,s)$. Since $s \in U$,
$$
b_1(r,s)b_2(q,s)-b_1(q,s)b_2(r,s)=0,
$$
for all $q,r \in R$. Since $R$ spans the whole space, we can find $\lambda_r \in R$ such that
$$
\sum_r \lambda_r r \in \ker b_2(X,s) \setminus \ker b_1(X,s).
$$
Summing the above accordingly, we have that
$$
b_1(\sum_r \lambda_r r,s) b_2(q,s)=0,
$$
and $b_1(\sum_r \lambda_r r,s) \neq 0$, and so $b_2(q,s)=0$, for all $q \in R$. This implies that $b_2(X,s)=0$ and so $b_2$ is degenerate at $s$. Similarly $b_1$ is degenerate at $s$ as well, and we have (i).

Suppose $\ker b_1(X,s) = \ker b_2(X,s)$, for each of the five points $s \in \{e_1,e_2,e_3,e_4,u \} \subset U$. Let $k$ be such that $e_1,\ldots,e_k$ are linearly independent and that $u=\sum_{i=1}^ku_i e_i$, for some $u_i \in {\mathbb R} \setminus \{ 0 \}$. Since, by assumption the points $e_1,\ldots,e_k,u$ are not collinear, $k \geqslant 3$. 

We have that for $i=1,\ldots,k$, there is a $\lambda_i \in {\mathbb R}$ such that $b_1(X,e_i)=\lambda_i b_2(X,e_i)$ and that there is a $\lambda_u$ such that $b_1(X,u)=\lambda_u b_2(X,u)$.
Substituting $u=\sum_{i=1}^ku_i e_i$ and using $b_1(X,e_i)=\lambda_i b_2(X,e_i)$, we get
$$
\sum_{i=1}^k u_i(\lambda_i-\lambda_u)b_2(X,e_i)=0.
$$

If $b_2$ is non-degenerate then 
$$
\sum_{i=1}^k u_i(\lambda_i-\lambda_u) e_i=0.
$$
Therefore $\lambda_i=\lambda_u=\lambda$ for $i=1,\ldots,k$, since $e_1,\ldots,e_k$ are linearly independent. This implies that $(b_1-\lambda b_2)(X,y)=0$ for all $y$ in the subspace spanned by $e_1,\ldots,e_k$. 
If $k=4 $ then $b_1 = \lambda b_2$ which implies $\psi_1$ and $\psi_2$ are linearly dependent, which by assumption, they are not. Therefore, $k=3$ and
$(b_1-\lambda b_2)(X,y)=0$ for all $y$ in the plane spanned by $e_1,e_2,e_3$. Hence $(\psi_1-\lambda \psi_2)(y)=0$ for all $y$ in the plane spanned by $e_1,e_2,e_3$.
Let $\psi=\psi_1-\lambda \psi_2$ and let $\psi'=\psi_1-\lambda' \psi_2 \neq \psi$, for some $\lambda' \in {\mathbb R}$. We have shown that $V(\psi)$ consists of two (possibly identical) planes $\pi_1$ and $\pi_2$. Since $R \in V(\psi_1,\psi_2)$, it follows that $R \in V(\psi,\psi')$. Since $R$ spans the whole space, $\pi_1 \neq \pi_2$ and $V(\psi')$ intersects $\pi_i$ in a (possible degenerate) conic, so we have (ii).

If $b_2$ is degenerate at $s=\sum_{i=1}^k u_i(\lambda_i-\lambda_u) e_i$, then, since $b_1(X,e_i)=\lambda_i b_2(X,e_i)$, we have that $b_1(s,e_i)=0$. If $k=4$ then this implies that $b_1$ is also degenerate at $s$, so we have (i) again. This leaves the case $k=3$. We are in the case that $b_2$ is degenerate at at most one point, so we have that for all $v \in U \setminus \{e_1,e_2,e_3,u \}$,
$\sum_{i=1}^3 v_i(\lambda_i-\lambda_v) e_i$ is the point $s$. Therefore, 
$$
\frac{v_3(\lambda_3-\lambda_v)}{v_2(\lambda_2-\lambda_v)}=\frac{u_3(\lambda_3-\lambda_u)}{u_2(\lambda_2-\lambda_u)}
$$
and 
$$
\frac{v_3(\lambda_3-\lambda_v)}{v_1(\lambda_1-\lambda_v)}=\frac{u_3(\lambda_3-\lambda_u)}{u_1(\lambda_1-\lambda_u)}.
$$
Eliminating $\lambda_v$ from these two equation, it follows that $v$ is a zero of the quadratic form, 
$$
X_1X_2u_3(\lambda_3-\lambda_u)(\lambda_2-\lambda_1)+X_1X_3u_2(\lambda_2-\lambda_u)(\lambda_1-\lambda_3)+X_2X_3u_1(\lambda_1-\lambda_u)(\lambda_3-\lambda_2),
$$
which is also zero at the points of $\{e_1,e_2,e_3,u \}$. Thus, we have (iii).
\end{proof}

\begin{lemma} \label{2pis}
Let $\psi_1$ and $\psi_2$ be linearly independent quadratic forms, where $V(\psi_2)$ consists of two planes and $V(\psi_1)$ intersects $V(\psi_2)$ in two non-degenerate conics. Then there are at most two quadratic forms, in the subspace spanned by $\psi_1$ and $\psi_2$, distinct from $\psi_2$, which are degenerate.
\end{lemma}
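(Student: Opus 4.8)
The plan is to read the statement as a fact about the discriminant of the pencil spanned by $\psi_1$ and $\psi_2$. Write $A_i$ for the symmetric matrix of $\psi_i$ and put $D(\lambda,\mu)=\det(\lambda A_1+\mu A_2)$, a binary quartic form; a member $\lambda\psi_1+\mu\psi_2$ of the pencil is degenerate precisely when $D(\lambda,\mu)=0$. Two elementary observations start things off. Since $V(\psi_2)$ is a union of two \emph{distinct} planes, $A_2$ has rank $2$, so any three of its columns are linearly dependent; hence in the expansion of $\det(A_2+tA_1)$ the terms of degree less than $2$ in $t$ vanish, that is, $D$ vanishes to order at least two at the point $[0:1]$ of $\mathrm{PG}_1({\mathbb R})$ corresponding to $\psi_2$, and we may write $D(\lambda,\mu)=\lambda^{2}E(\lambda,\mu)$ with $E$ a binary quadratic form. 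Granting that $D\not\equiv0$, the factor $\lambda^{2}$ contributes only the zero $[0:1]$, i.e.\ $\psi_2$ itself, while $E$ has at most two zeros; so there are at most two degenerate members of the pencil other than $\psi_2$, as claimed.

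All the content therefore lies in proving $D\not\equiv0$, i.e.\ that the pencil contains a non-degenerate quadric. Suppose not; then $\psi_1$ is degenerate. Its rank cannot be at most $2$, for then, by Lemma~\ref{2degen}, $V(\psi_1)$ would be a union of planes and $V(\psi_1)\cap\pi_1$ a union of lines, a degenerate conic, contrary to hypothesis. So $\psi_1$ has rank $3$; let $v$ be the point at which its polarisation $b_1$ is degenerate. If $v$ lay on $\pi_1$ or $\pi_2$, then $v$ would lie in the kernel of the matrix of the restriction of $\psi_1$ to that plane, forcing the corresponding conic to be degenerate; since $V(\psi_1)\cap\pi_1$ and $V(\psi_1)\cap\pi_2$ are non-degenerate, $v\notin\pi_1\cup\pi_2=V(\psi_2)$, so $d:=\psi_2(v)\ne0$.

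Now change coordinates, which only multiplies $D$ by a non-zero constant, so that $\psi_1=X_1^{2}+X_2^{2}+\varepsilon X_3^{2}$ with $\varepsilon\in\{1,-1\}$ and $v=(0,0,0,1)$, and write $A_2$ in $3+1$ block form with bottom-right entry $d$. Taking the Schur complement with respect to that entry gives, for every $\mu\ne0$,
\[
D(\lambda,\mu)=\mu d\cdot\det\!\bigl(\mathrm{diag}(\lambda,\lambda,\varepsilon\lambda)+\mu B'\bigr)
\]
for a fixed symmetric $3\times3$ matrix $B'$. Setting $\lambda=1$ and letting $\mu\to0$, the right-hand side is $\varepsilon d\,\mu+O(\mu^{2})$, which is non-zero for all small $\mu\ne0$; hence $D\not\equiv0$, and with the first paragraph the proof is complete. (One can push this a little further: the rank of $A_2$ being $2$ forces $B'$ to have rank $1$, so the $3\times3$ determinant collapses to $\lambda^{2}(\varepsilon\lambda+c\mu)$ for a scalar $c$, whence $D=d\lambda^{2}\mu(\varepsilon\lambda+c\mu)$, exhibiting the at most two extra degenerate members as $\psi_1$ and $c\psi_1-\varepsilon\psi_2$.)

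The step I expect to be the crux is exactly this exclusion of the degenerate scenario in which every quadric of the pencil is singular; this genuinely can happen for a general pencil of quadrics in $\mathrm{PG}_3({\mathbb R})$ --- for instance a pencil of quadric cones through a common vertex --- so the hypotheses must really be used. The purpose of assuming that the two plane sections of $V(\psi_1)$ by the planes of $V(\psi_2)$ are non-degenerate conics is precisely to keep the vertex of $\psi_1$ off $V(\psi_2)$ in the one delicate case, when $\psi_1$ is a cone, and it is this that makes the Schur-complement reduction work. A small point worth flagging in the write-up: ``two planes'' must mean two \emph{distinct} planes, since a repeated plane would make $V(\psi_1)\cap V(\psi_2)$ a single conic --- so the hypothesis does ensure $A_2$ has rank exactly $2$, as used throughout.
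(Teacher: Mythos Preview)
Your proof is correct and takes a genuinely different route from the paper's. The paper normalises $\psi_2=X_3X_4$, fixes one degenerate member $\psi$ of the pencil with vertex $x$, and then shows by direct manipulation of the polarisation equations that any other degenerate member $\psi+\lambda X_3X_4$ must have its vertex at a single point $y$ determined by $x$; this pins down $\lambda$ and yields at most two degenerate members besides $\psi_2$. Your argument instead works with the discriminant $D(\lambda,\mu)=\det(\lambda A_1+\mu A_2)$ of the pencil: the rank condition on $A_2$ forces a factor of $\lambda^2$, leaving a binary quadratic $E$ whose at most two zeros account for the remaining degenerate members, and the hypotheses on the two conic sections are used only to show $D\not\equiv 0$ via the Schur complement. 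This is the standard pencil-of-quadrics viewpoint and is cleaner and more structural; it also makes transparent why ``two'' is the right bound (it is the residual degree of $D$). The paper's computation is more elementary in that it avoids any determinant machinery, but it is longer and somewhat ad hoc. One small stylistic point: your appeal to Lemma~\ref{2degen} to rule out $\mathrm{rank}\,A_1\le 2$ is a slight detour---it is more direct to note that a radical of dimension at least two must meet the hyperplane $\pi_1$ nontrivially, forcing $\psi_1|_{\pi_1}$ to be degenerate---but the conclusion is the same.
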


\begin{proof}
After a suitable projective transformation we can assume that $\psi_2(X)=X_3X_4$.

Let $\psi$ be a quadratic form in the subspace spanned by $\psi_1$ and $\psi_2$, which is degenerate at the point $x$.

Suppose that for some $\lambda \in {\mathbb R}$, the quadratic form $\psi(X)+\lambda X_3X_4$ is degenerate at the point $y$.

We will prove that the point $y$ is determined.

Let $b=\sum_{i=1}^4 \alpha_i(Y)X_i$ be the polarisation of $\psi$. Then, since $\psi$ is degenerate at the point $x$,
$$
\sum_{i=1}^4 \alpha_i(Y)x_i=0,
$$
and $\alpha_i(x)=0$ for $i=1,\ldots,4$.

The polarisation of the quadratic form $\psi(X)+\lambda X_3X_4$ is 
$$
b(X,Y)+\lambda(X_3Y_4+Y_3X_4).
$$
Since $b$ is degenerate at $y$,
$$
0=b(X,y)+\lambda(X_3y_4+y_3X_4),
$$
so $\alpha_1(y)=\alpha_2(y)=0$ and $\alpha_3(y)+\lambda y_4=0$ and $\alpha_4(y)+\lambda y_3=0$.

Now, $b(x,y)=0$ implies $x_3 \alpha_3(y)+x_4 \alpha_4(y)=0$, so combining these equations, we get $(x_4y_3+y_4x_3)\alpha_3(y)=0$. The linear forms $\alpha_1,\alpha_2,\alpha_3$ and $\alpha_4$ are linearly dependent (since $x$ is a common zero of these forms), so $\alpha_3(y)=0$ implies $x=y$. Hence, $y$ is a common zero of $\alpha_1(y)=\alpha_2(y)=0$ and $x_4y_3+y_4x_3=0$.

If $\alpha_1$ and $\alpha_2$ are linearly dependent or $x_3=x_4=0$ then $\psi$ is degenerate at a point $(1,a,0,0)$, which is one the line $\ker X_3 \cap \ker X_4$. This implies that  $V(\psi)$ intersect $X_3$ and $X_4$ in degenerate conics, which contradicts the hypothesis.

If $x_3=0$ and $x_4 \neq 0$ then $\alpha_4(y)=0$ and so $x=y$. Likewise, if $x_4=0$ and $x_3 \neq 0$ then $\alpha_3(y)=0$ and so $x=y$.

If $x_3x_4 \neq 0$ then $x_3X_4+x_4X_3$ is a linear form, linearly independent from $\alpha_1(X)$ and $\alpha_2(X)$, since $\alpha_1(x)=\alpha_2(x)=0$ and $2x_3x_4 \neq 0$. So we conclude that there is a unique point $y$ which is a common solution of $\alpha_1(y)=\alpha_2(y)=0$ and $x_4y_3+y_4x_3=0$.
\end{proof}

\section{The main structure theorem}

In the following three lemmas, we will make extensive use of the following theorem, which is \cite[Proposition 5.3]{GT2013}.

\begin{theorem} \label{GTinter}
Let $S$ be a set of $n$ points in $\mathrm{PG}_2({\mathbb R})$ spanning at most $Kn$ ordinary lines, for some $K\geqslant 1$. Then one of the following holds:
\begin{enumerate}
\item[(i)]
$S$ is contained in the union of an irreducible cubic and an additional $2^{75}K^5$ points.
\item[(ii)]
$S$ is contained in the union of an irreducible conic $\sigma$ and an additional $2^{64}K^4$ lines. Furthermore, $\sigma$ contains between $\frac{1}{2}n-2^{76}K^5$ and $\frac{1}{2}n+2^{76}K^5$ points of $S$, and $S \setminus \sigma$ spans at most $2^{62}K^4n$ ordinary lines.
\item[(iii)]
$S$ is contained in the union of $2^{16}K$ lines and an additional $2^{87}K^6$ points.
\end{enumerate}
\end{theorem}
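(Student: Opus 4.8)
Theorem~\ref{GTinter} is not proved here; it is quoted verbatim as \cite[Proposition 5.3]{GT2013}, so for the purposes of this article it suffices to cite it. Were one to reprove it, the plan would mirror, one dimension down, the route this paper takes to its own weak structure theorem, Theorem~\ref{weak}.

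The combinatorial heart is a pair count together with a Melchior-type inequality. Writing $k_\ell=|\ell\cap S|$ for the number of points of $S$ on a line $\ell$, the identity
$$
\binom{n}{2}=(\text{number of ordinary lines})+\sum_{\ell:\,k_\ell\geqslant 3}\binom{k_\ell}{2}
$$
shows that the hypothesis of at most $Kn$ ordinary lines forces the $3$-rich lines of $S$ to carry all but at most $Kn$ of the $\binom{n}{2}$ pairs, and an averaging argument shows that a typical point of $S$ lies on only $O(K)$ ordinary lines. Dualising and applying Melchior's use of Euler's formula to the resulting arrangement of $n$ lines in the projective plane --- the two-dimensional version of the computation behind Lemma~\ref{bad} here --- shows that, outside an exceptional set polynomial in $K$, this arrangement is locally a triangular grid. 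This yields the dichotomy: either a bounded number of lines contains all but $O(\mathrm{poly}(K))$ points of $S$, which is conclusion (iii), or there is a large subset $S'\subseteq S$, with $|S\setminus S'|=O(\mathrm{poly}(K))$, which sits very regularly with respect to its rich lines.

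In the latter regime one turns to the projective geometry of plane cubics. The regular rich-line structure on $S'$, together with the Chasles statement Theorem~\ref{8gives9}, is used to build a cubic curve $\gamma$ containing all but $O(\mathrm{poly}(K))$ points of $S$: a dimension count produces a cubic through enough points of a local triangular grid, and one then extends $\gamma$ one point at a time, each new point being forced onto $\gamma$ as the residual intersection of a rich line with $\gamma$ by Theorem~\ref{8gives9} --- the exact two-dimensional analogue of how Lemma~\ref{segment} propagates a pair of quadrics along a segment of rather good edges. If $\gamma$ may be chosen irreducible one is in case (i); if every such $\gamma$ degenerates, a further ordinary-line count forces roughly $\tfrac12 n$ of the points onto an irreducible conic component --- any significant deviation from $\tfrac12 n$ would create too many ordinary lines --- with the remaining points on the linear components, which is case (ii). The explicit constants $2^{75}K^5$, $2^{64}K^4$, $2^{76}K^5$ and so forth are produced entirely by tracking the growth of the exceptional set through each estimate and through the point-by-point extension.

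The delicate part, and the reason Sections~4 and~5 of \cite{GT2013} are long, is exactly this transfer from a combinatorial statement to an algebraic one while keeping the exceptional set of size only polynomial in $K$: one has to stop points from accumulating off the curve, control its possible degenerations, strip off the stray lines in the correct order, and in the conic case pin down its cardinality near $\tfrac12 n$ by a separate count. For the complete argument we refer to \cite{GT2013}.
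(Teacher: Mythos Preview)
Your identification is correct: the paper does not prove Theorem~\ref{GTinter} at all, it simply quotes it as \cite[Proposition~5.3]{GT2013} and uses it as a black box in the subsequent lemmas. Your opening sentence already matches the paper's treatment exactly; the remainder of your proposal is a helpful but supererogatory sketch of the Green--Tao argument that the paper itself makes no attempt to reproduce.
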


In the first lemma of its type, we consider the case that $S$ has many points which project the other points of $S$ onto a set of type (i) of Theorem~\ref{GTinter}.

\begin{lemma} \label{typeone} 
Let $S$ be a set of $n$ points in $\mathrm{PG}_3({\mathbb R})$, no three collinear and not all co-planar, spanning less than $Kn^2$ ordinary planes, for some function $K=o(n^{\frac{1}{6}})$. Let $S'$ be the subset of all points of $S$ which project $S$ onto a set spanning at most $dKn$ ordinary lines, for some large constant $d$. Suppose there is a subset $S''$ of $S'$ of size at least $(1-4/d)n$, where the projection from each point $p$ of $S''$ is contained in the union of an irreducible cubic $V(\psi_p)$ and an additional $2^{75}(dK)^5$ points. Then all but at most $O(K)$ points of $S$ are contained in the intersection of two quadrics. Furthermore, all but at most $O(K)$ points of $S'$ are incident with at least $\frac{3}{2}n-O(K)$ ordinary planes.
\end{lemma}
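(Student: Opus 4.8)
The plan is to convert the cubic projections into a single quadric intersection by passing to cones. For each $p\in S''$ let $C_p$ be the cone with vertex $p$ over the irreducible cubic $V(\psi_p)$ in the plane onto which $S$ is projected from $p$; it is an irreducible cubic surface containing all but at most $2^{75}(dK)^5+1$ points of $S$, a set of size $o(n)$ since $K=o(n^{1/6})$. First I would apply Theorem~\ref{weak} and the pigeon-hole principle to find quadratic forms $\psi_1,\psi_2$ with $R:=S''\cap V(\psi_1,\psi_2)$ of size at least $|S''|/(4143K)\gg n/K$; such an $R$ must span $\mathrm{PG}_3(\mathbb R)$, for a set of more than $O((dK)^5)$ coplanar points would, projected from one of its own points, land on a line rather than an irreducible cubic. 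Now $\phi_p$ vanishes identically on $V(\psi_1,\psi_2)$, so by Lemma~\ref{phip} the surface $V(\phi_p)$ is, for every $p\in R$, a cubic cone with vertex $p$; since $V(\phi_p)$ and $C_p$ both contain the projections from $p$ of the more than nine points of $R$, and $V(\psi_p)$ is irreducible, $C_p$ and $V(\phi_p)$ coincide off an $O((dK)^5)$-set. Hence all but $O((dK)^5)$ points of $S$ lie on $V(\phi_p)$ for every $p\in R$.

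Next I would pin down the quadric intersection. A point $r\in S$ lying on all the $V(\phi_p)$, $p\in R$, but not on $V(\psi_1,\psi_2)$, lies in $V(\phi_{pq})$ for all $p,q\in R$ by Lemma~\ref{thisorthat}; so $S\setminus V(\psi_1,\psi_2)$ is contained, up to $O((dK)^5)$ points, in $U:=\bigcap_{p,q\in R}V(\phi_{pq})$. If $|U\cap S|<5$ this already places all but $O((dK)^5)$ points of $S$ on $V(\psi_1,\psi_2)$; otherwise Lemma~\ref{theRlemma} applies to $R$, and of its three conclusions, (i) and (ii) are excluded because a common degeneracy of both polarisations, or a pair of planar conics, would make most of $S$ project onto a reducible curve, contradicting the hypothesis on $S''$. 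This leaves (iii), so the exceptional points lie on a conic. Running the analogous discussion with Lemmas~\ref{2degen} and \ref{2pis} to exclude degenerate members of the pencil $\langle\psi_1,\psi_2\rangle$, and hence a reducible $V(\psi_1,\psi_2)$ (any decomposition into lines, conics and cubics would again give reducible projections from points of $S''$), I would conclude that $V(\psi_1,\psi_2)$ is an irreducible space curve $\Gamma$ of degree four and that $S$ lies on $\Gamma$ apart from a set $E$ with $|E|=o(n)$.

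It remains to reduce $|E|$ to $O(K)$ and to count ordinary planes, and here I would use the group law on $\Gamma$: four points of $\Gamma$ are coplanar precisely when they sum, in $\mathrm{Pic}^0(\Gamma)$, to the fixed hyperplane class $\kappa$. Writing $T=S\cap\Gamma$, the bound of $Kn^2$ on ordinary planes forces $T$ to agree, outside $O(K)$ points, with a coset of a subgroup correctly aligned with $\kappa$ (an arithmetic- or geometric-progression-like set when $\Gamma$ is singular), for otherwise a positive proportion of the $\binom{|T|}{2}$ planes through a fixed point of $T$ and two others would be ordinary, producing $\gg n^3$ of them. With $T$ so structured, the ordinary planes through a fixed $a\in T$ are, up to $O(K)$, those tangent to $\Gamma$ at some $b\neq a$ and passing through $a$ — one for each $b\in T$ — together with those tangent to $\Gamma$ at $a$ whose two remaining intersection points lie in $T$ — about $\tfrac12|T|$ of them, by the invariance of $T$ under $x\mapsto\kappa-2a-x$ (here Lemma~\ref{tangentproject} identifies the relevant tangent line $\ell_a$) — for a total of $\tfrac32 n-O(K)$; this gives the ``furthermore'' for points of $S'\cap\Gamma$, hence for all but $O(K)$ points of $S'$. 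The same count shows that any $r\in S\setminus\Gamma$ lies on $\gg n^2$ ordinary planes, so $\tfrac12 n^2|E|\leqslant 3Kn^2$ and $|E|=O(K)$.

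I expect the main difficulty to be the structural statement about $T$ on the quartic $\Gamma$ — that few ordinary planes force $T$ to be essentially a correctly aligned coset — which is the three-dimensional analogue of the hardest part of the Green and Tao argument, together with the bookkeeping needed to exclude every way in which $V(\psi_1,\psi_2)$ could fail to be an irreducible quartic; Lemmas~\ref{2degen}, \ref{theRlemma} and \ref{2pis} are precisely the tools tailored to this last task.
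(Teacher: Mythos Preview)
Your overall architecture is sensible up through the application of Lemma~\ref{theRlemma}, but the proposal has a genuine gap at the end, and the paper's route is quite different from yours at exactly that point.

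The gap is the structural statement you flag yourself: that few ordinary planes force $T=S\cap\Gamma$ to be, up to $O(K)$ points, a coset in $\mathrm{Pic}^0(\Gamma)$ aligned with the hyperplane class. This is a genuine three\nobreakdash-dimensional analogue of the hardest part of Green--Tao, and you give no argument for it. Both of your concluding claims---that each $a\in T$ lies on $\tfrac32 n-O(K)$ ordinary planes, and that each $r\in S\setminus\Gamma$ lies on $\gg n^2$ ordinary planes---rest on this unproven coset structure. You also need $\Gamma$ irreducible for the group law to make sense, and your argument for that (``running the analogous discussion with Lemmas~\ref{2degen} and~\ref{2pis}'') is only a gesture; in fact the lemma does not assert irreducibility, and the paper never establishes it.

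The paper avoids all of this by staying in the plane. Its first move (Claim~1) is, \emph{before} producing any quadric, to project from each $p\in S''$ and apply \cite[Lemma~7.2]{GT2013} to the irreducible cubic $V(\psi_p)$: this already cuts the exceptional set for each projection from $O((dK)^5)$ down to $O(K)$, because each stray point is incident with $\gg n$ ordinary lines. Only then (Claim~2) does it locate $\psi_1,\psi_2$, not via Theorem~\ref{weak} but via a sharper pigeon\nobreakdash-hole on ``really good'' edges (good, not rotten, with good non\nobreakdash-rotten neighbours) to get a segment of $13$ such edges on some $p^*\cap q^*$ with $p,q\in S''$, and hence via Lemma~\ref{segment} a set $R$ of $13$ points in $S''\cap V(\psi_1,\psi_2)$. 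When Lemma~\ref{theRlemma}(iii) puts $U$ on a conic, Claim~1 immediately gives $|U\cap S|=O(K)$ (a conic meets the irreducible cubic $V(\psi_p)$ in at most six points), so $n-O(K)$ points of $S$ lie on $V(\psi_1,\psi_2)$. For the $\tfrac32 n-O(K)$ count (Claim~3), the paper again projects from $q\in S'$, shows $q$ must in fact lie in $S''$, and then does two\nobreakdash-dimensional tangent\nobreakdash-line bookkeeping: the tangent to $V(\psi_p)$ at each point of the coset $C_p$ is ordinary, and via Lemma~\ref{tangentproject} these lift to $n-O(K)$ ordinary planes through $q$, while the planes containing the fixed line $\ell_q$ contribute a further $\tfrac12 n-O(K)$ concurrent ordinary lines in the projection from $q$. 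No group law on a space curve is ever used.
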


\begin{proof}
This proof is rather long, so we break it down into three claims.

Claim 1: The projection of $S$ from $p \in S''$ is contained in a coset $C_p$ of a subgroup of the non-singular points of the curve $V(\psi_p)$ and an additional $O(K)$ points.

Let $p \in S''$. By hypothesis, the projection of $S$ from $p$ is contained in an irreducible cubic $V(\psi_p)$ and an additional $2^{75}(dK)^5$ points and spans at most $dKn$ ordinary lines. By \cite[Lemma 7.2]{GT2013}, $V(\psi_p)$ is either an elliptic curve or an acnodal cubic and the projection differs from a coset $C_p=H_p \oplus x_p$, where $H_p$ is a subgroup of the non-singular points of the curve and $x_p\oplus x_p\oplus x_p \in H_p$, by at most $\epsilon=O((dK)^5)$ points. If $q$ is a point of the projection that is not in $V(\psi_p)$ then by \cite[Lemma 7.7]{GT2013}, $q$ is incident with at least $n/1000-\epsilon$ ordinary lines. If $q$ is a point of the projection that is in $V(\psi_p) \setminus C_p$ then $q$ is incident with at least $n-2\epsilon$ ordinary lines, since a line joining $q$ to a point of $C_p$ will be ordinary unless it is incident with another point of the projection of $S$ from $p$, which is necessarily not in $C_p$. Thus, we have that $\epsilon(n/1000-\epsilon) \leqslant 2dKn$ and therefore $\epsilon=O(K)$, since $\epsilon=o(n)$. 

Claim 2: All but at most $O(K)$ points of $S$ are contained in the intersection of two quadrics.

Define $(q)_p$ to be the projection of the point $q$ from $p$.

Let $p \in S''$. Firstly, we show that $\Gamma$ has a lot of edges. Let $(q)_p$ be a point of the coset $C_p$. At least $n-O(K)$ points of the projection are contained in $C_p$, so there are $\frac{1}{2}(n-O(K))$ lines incident with $(q)_p$ and a further two points of the projection. Therefore, there are at least  $\frac{1}{2}(n-O(K))$ planes containing the line joining $p$ and $q$ which contain at least $4$ points of $S$. This implies that in the graph $\Gamma$ the line $p^* \cap q^*$ contains at least $\frac{1}{2} (n-O(K))$ edges. Given that there are at least $\frac{1}{2}(n-O(K))(1-4/d)n$ such pairs $p$ and $q$ we deduce that, for $n$ and $d$ large enough, the graph $\Gamma$ has more than $\frac{1}{5}n^3$ edges. 

Let us call all the edges on a line $p^* \cap q^*$, where $p \in S$ and $q \in S\setminus S''$, {\em rotten}. There are less than $4n^3/d$ rotten edges. Let us call an edge {\em really good} if it is good, not rotten, and all the edges emanating from its end vertices are also good and not rotten. Recall that a good edge is an edge both of whose end vertices have degree twelve and which is on four triangles. If an edge is not really good, then either it is bad or rotten or it is good and not rotten and one of the edges emanating is bad or rotten. We can bound the number of not really good edges by
$$
30Kn^2+(4/d)n^3+(2 \times 11)(30Kn^2+(4/d)n^3)<92n^3/d+O(Kn^2)<93n^3/d,
$$
for $d$ large enough.

By the pigeon-hole principle, there is a $p \in S''$, such that $p^*$ has at most 
$$
2 \times \frac{93n^3}{d}/(1-\frac{4}{d})n= \frac{186n^2}{d-4}
$$
not really good edges. 

There are at least $n-4(n/d)-cK$ points of $S''$ which are projected from $p$ onto $C_p$, for some constant $c$ and $n$ large enough.
Again by the pigeon-hole principle, there is a $q \in S''$, projected from $p$ onto $C_p$, such that $p^* \cap q^*$ has at most 
$$
\frac{186n}{(d-4)(1-4/d-cK/n)}
$$ 
not really good edges. By Claim 1, there are at most $O(K)$ points of $S$ which are not projected from $p$ onto $V(\psi_p)$, and likewise at most $O(K)$ points of $S$ which are not projected from $q$ onto $V(\psi_q)$. If $r$ is such a point then we also count the edges emanating from $p^* \cap q^* \cap r^*$ as not really good. This discards at most $O(K)$ extra edges. Therefore, there are at most
$$
\frac{186n}{(d-4)(1-4/d-cK/n)}+O(K)
$$ 
not really good edges on the line $p^* \cap q^*$.
Since the line $p^* \cap q^*$ has at least $\frac{1}{2} (n-O(K))>\frac{1}{3}n$ edges, we can conclude that there is a segment $T$ on $p^* \cap q^*$ of at least 13 really good edges, since
$$
\frac{1}{558}(d-4)(1-4/d-cK/n)>13,
$$
choosing $d$ large enough. 

Let $R$ be a set of $13$ points of $S$ where $r \in R$ if and only if $r^*$ cuts $p^* \cap q^*$
in the segment $T$. Since $T$ is a segment of really good edges $R$  is a subset of $S''$. Since a really good edge is a rather good edge, Lemma~\ref{segment} implies that there are two linearly independent quadrics $V(\psi_1)$ and $V(\psi_2)$ both containing $R \cup \{p,q\}$. The points of $R$ are projected from $p$ onto both the projection of $V(\phi_p)$, where $\phi_p$ is defined (as before) as 
$$
\phi_p(x)=b_1(p,x)\psi_2(x)-b_2(p,x)\psi_1(x),
$$
and $V(\psi_p)$. By Lemma~\ref{phip}, the projection of $V(\phi_p)$ from $p$ is a cubic curve. Since $|R|>9$  and $V(\psi_p)$ is an irreducible cubic, we have by Bezout's theorem that the two curves are the same. Similarly, the projection of $V(\phi_q)$ from $q$ and $V(\psi_q)$ are the same curve.

Let $S_{pq}$ be the set of points of $S$ which are projected onto $V(\psi_q)$ from $q$ and onto $V(\psi_p)$ from $p$. By Claim 1, $S_{pq}$ contains at least $n-O(K)$ points of $S$. By the previous paragraph the points of $S_{pq}$ are contained in $V(\phi_p,\phi_q)$ and so by Lemma~\ref{thisorthat}, if a point of $S_{pq}$ is not in $V(\psi_1,\psi_2)$, then it is in $V(\phi_{pq})$.

Now we consider two cases. 

Suppose that for all $r \in R$, the irreducible cubic $V(\psi_r)$, from Claim 1, is the projection of $V(\phi_r)$. Then, repeating the above, we have that there are $n-O(K)$ points of $S$ which are in $V(\phi_{rt})$, for all $r,t \in R\cup \{p,q\}$ or they are in $V(\psi_1,\psi_2)$. Let
$$
U=\cap_{r,t \in R\cup \{p,q\}} V(\phi_{rt}).
$$
The set $R\cup \{p,q\}$ spans the whole space, so Lemma~\ref{theRlemma} applies. The set $R$ is not contained in two planar conics, since the projection of $R$ from $p$ is onto an irreducible cubic, which by Bezout's theorem intersects the union of two conics in at most $12$ points, so we are in case (i) or (iii). If we are in case (i) then $b_1$ and $b_2$ are both degenerate at the same point $s$ and so for all $x \in V(\phi_p)$ and all $\lambda \in {\mathbb R}$, we have $\phi_p(x+\lambda s)=0$, which implies that the projection of $V(\phi_p)$ contains lines. However, the projection of $V(\phi_p)$ is $V(\psi_p)$ which is an irreducible cubic, so this case does not occur. If we are in case (iii) then $U$ is contained in a conic. By Bezout's theorem at most six points of $U$ are projected from $p$ onto $C_p$. Hence, by Claim 1, $|U|=O(K)$. This implies that at least $n-O(K)$ points of $S$ are contained in $V(\psi_1,\psi_2)$. 

Now, suppose that there is an $r \in R$, for which the irreducible cubic $V(\psi_r)$, from Claim 1, is not the projection of $V(\phi_r)$. The points of $V(\psi_1,\psi_2)$ are projected onto the projection of $V(\phi_r)$, so by Bezout's theorem, we conclude that there are at most $9$ points of the $n-O(K)$ points of Claim 1, that are in $V(\psi_1,\psi_2)$. Therefore, by Lemma~\ref{thisorthat}, there are at least $n-O(K)$ points of $S$ in $V(\phi_{pq})$. If $\phi_{pq}$ is degenerate at $p$, then the $n-O(K)$ points of $V(\phi_{pq}) \cap S$ are projected from $p$ onto a (possibly degenerate) conic, which contradicts Claim 1. Hence $\phi_{pq}$ is not degenerate at $p$. The quadric $V(\phi_{pq})$ contains the line joining $p$ and $q$ and since it is not degenerate at $p$ is contains at most one other line incident with $p$. To find another quadric containing at least $n-O(K)$ points of $S$, we repeat the entire process with a different $q$ (prohibiting $q$ when we apply the pigeon-hole argument), say $q'$, with the property that the line joining $p$ and $q'$ is not contained in $V(\phi_{pq})$. 

Therefore, we conclude that at least $n-O(K)$ points of $S$ are contained in the intersection of two linearly independent quadrics, which from now on will be $V(\psi_1,\psi_2)$, and Claim 2 is proven.

Claim 3: Every point of $S' \cap V(\psi_1,\psi_2)$ is incident with at least $\frac{3}{2}n-O(K)$ ordinary planes.

Suppose $p\in S'\cap V(\psi_1,\psi_2)$. The projection $S_p$ of $S$ from $p$ spans at most $dKn$ ordinary lines and Theorem~\ref{GTinter} applies. There are $(1-4/d)n-c'K$ points in $S'' \cap V(\psi_1,\psi_2)$ which are projected from $p$ onto the projection of $V(\phi_p)$, for some constant $c'$. If the projection of $V(\phi_p)$ is reducible then, by Theorem~\ref{GTinter}, there is a line of the projection which contains at least 
$$
((1-4/d)n-2^{87}(dK)^6-c'K)/2^{64}(dK)^4>c n/K^4
$$ 
points of the projection of $S'' \cap V(\psi_1,\psi_2)$, for some constant $c$ when $n$ is large enough. However, the projection of $S$ from a point of $S'' \cap V(\psi_1,\psi_2)$ is contained in an irreducible cubic and an additional $O(K)$ points by Claim 1. Since $K=o(n^{1/6})$, $c n/K^4>K^2$ for $n$ large enough, so there cannot be a line containing at least $cn/K^4$ points of the projection of $S'' \cap V(\psi_1,\psi_2)$. Hence, we deduce that $S_p$ is contained in an irreducible cubic $V(\psi_p)$, which by Bezout's theorem must be the projection of $V(\phi_p)$, and an additional $2^{75}(dK)^5$ points. This implies that $p \in S''$ and by Claim 1, the projection $S_p$ of $S$ from a point $p \in S'$ is contained in an irreducible cubic and an additional $O(K)$ points. Lemma 7.2 from \cite{GT2013} implies that there is a coset $x_p \oplus H_p$ of $V(\psi_p)$ such that the symmetric difference of $S_p$ and $x_p \oplus H_p$ has size $O(K)$. 

Let $(q)_p \in S_p \cap (x_p \oplus H_p)$. The tangent to $V(\psi_p)$ at $(q)_p$ is an ordinary line of $S_p$ for all but at most $O(K)$ points $q \in S'$. By Lemma~\ref{tangentproject}, the line $\ell_q$
is projected onto the tangent to $V(\psi_p)$ at $(q)_p$, since $V(\psi_p)$ is the projection of $V(\phi_p)$ from $p$. Hence, for all but at most $O(K)$ points $q$ of $S'$, the plane $\pi_p^q$, spanned by the line $\ell_q$ and the point $p$, is an ordinary plane of $S$.

Suppose $q \in S'$. Let $(\ell_q)_q$ be the projection of $\ell_q$ from $q$ which, by Lemma~\ref{tangentproject}, is a point of the projection of $V(\phi_q)$. Projecting from $q$, the plane $\pi_p^q$ is projected onto an ordinary line of $S_q$ for all but at most $O(K)$ points of $S'$. Moreover, these lines are all incident with the point $(\ell_q)_q$, since the planes $\pi_p^q$ all contain the line $\ell_q$. Therefore, we have that $S_q$ spans $\frac{1}{2}n-O(K)$ concurrent ordinary lines. Moreover, as we have already seen, the symmetric difference of $S_q$ and $x_q \oplus H_q$ has size $O(K)$ which implies that $S_q$ spans at least $n-O(K)$ other ordinary lines. Thus, $q$ is incident with at least $\frac{3}{2}n-O(K)$ ordinary planes.
\end{proof}

In the second lemma of its type, we consider the case in which $S$ has a few points which project the other points of $S$ onto a set of type (ii) of Theorem~\ref{GTinter}.

\begin{lemma} \label{typetwo} 
Let $S$ be a set of $n$ points in $\mathrm{PG}_3({\mathbb R})$, no three collinear and not all co-planar, spanning at most $Kn^2$ ordinary planes, where $K=o(n^{\frac{1}{7}})$. Let $S_{\sigma}$ be the subset of $S$ for which $p \in S_{\sigma}$ if and only if the projection of $S$ from $p$ spans at most $dKn$ ordinary lines and is contained in the union of an irreducible conic $\sigma_p$, containing at least $\frac{1}{2}n-2^{76}(dK)^5$ points of the projection, and a set $L_p$ of at most $2^{64}(dK)^4$ lines. If $|S_{\sigma}|>n/2d$, for some large constant $d$, then there are two planar conic sections of a quadric which each contain at least $\frac{1}{2}n-cK$ points of $S$, for some constant $c$.
\end{lemma}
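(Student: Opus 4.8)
The plan is to follow the architecture of the proof of Lemma~\ref{typeone}, replacing ``irreducible cubic'' throughout by ``irreducible conic'' and controlling the extra degeneracy this produces. Fix $p\in S_\sigma$. Lifting the planar description of the projection $S_p$ of $S$ from $p$ back through the projection, let $Q_p$ be the quadric cone with vertex $p$ over the conic $\sigma_p$; then $S$ lies in $Q_p$ together with the at most $2^{64}(dK)^4$ planes through $p$ obtained by coning the lines of $L_p$, and by the point count in Theorem~\ref{GTinter}(ii) exactly $\tfrac12 n+O(K^5)$ points of $S$ lie on $Q_p$. Since no three points of $S$ are collinear, each coned plane meets $Q_p$ in at most two points of $S$, so the other $\tfrac12 n+O(K^5)$ points of $S$ lie on those planes and off $Q_p$.

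Next I would show that $\Gamma$ has $\Omega(n^3)$ edges. For $p\in S_\sigma$ and any of the $\tfrac12 n+O(K^5)$ points $q\in S$ with $(q)_p\in\sigma_p$, the number of edges of $\Gamma$ on the line $p^*\cap q^*$ equals the number of distinct lines joining $(q)_p$ to a further point of $S_p$; as every line through the conic point $(q)_p$ meets $\sigma_p$ in at most one further point, this number is at least $(n-2)-|S_p\setminus\sigma_p|\geqslant\tfrac12 n-O(K^5)$. Summing over the $\geqslant n/2d$ such $p$ and the corresponding $q$, distinct pairs contributing distinct lines, gives $|E|\geqslant cn^3/d$ for $n$ large. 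Now I would run the pigeon-hole scheme of the proof of Lemma~\ref{typeone}: declare an edge \emph{rotten} if it lies on $r^*\cap s^*$ with $r$ or $s$ outside $S_\sigma$, so that there are $O(n^3/d)$ rotten edges and, by Lemma~\ref{slightlybad}, the edges that are not ``really good'' number $O(n^3/d)$, a small fraction of $|E|$ once $d$ is large. Two pigeon-hole steps produce $p,q\in S_\sigma$ with $(q)_p\in\sigma_p$ for which $p^*\cap q^*$ carries a long segment $T$ of really good edges; taking $R$ to be the set of points of $S$ whose dual plane meets $T$ in an interior vertex gives $R\subseteq S_\sigma$ and $|R|$ as large as needed, so by Lemma~\ref{segment} there are linearly independent quadratic forms $\psi_1,\psi_2$ with $R\cup\{p,q\}\subseteq V(\psi_1,\psi_2)$.

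Projecting from $p$, we have $V(\psi_1,\psi_2)\subseteq V(\phi_p)$, and by Lemma~\ref{phip} the surface $V(\phi_p)$ is a cubic cone with vertex $p$, hence projects onto a plane cubic $\gamma_p$ containing the projections of $R\cup\{q\}$. Setting $S_{pq}$ to be the points of $S$ projecting into $\gamma_p$ from $p$ and into the corresponding cubic from $q$, and running the case analysis of Lemma~\ref{typeone} via Lemma~\ref{thisorthat} (suitably adapted, since the projected cubic is now reducible), one concludes that all but $O(K)$ points of $S$ lie on $V(\psi_1,\psi_2)$. As $\tfrac12 n+O(K^5)$ of these lie on $Q_p$ and a plane cubic meeting the irreducible conic $\sigma_p$ in more than six points must contain it as a component, $\gamma_p=\sigma_p\cup m_p$ for a line $m_p$, so $V(\phi_p)=Q_p\cup\Pi_p$ for a plane $\Pi_p$. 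Now $V(\psi_1,\psi_2)\subseteq Q_p\cup\Pi_p$ forces the part of $V(\psi_1,\psi_2)$ lying in $\Pi_p$ to carry $\tfrac12 n-O(K^5)$ points of $S$, which (no three collinear) must be an irreducible plane conic $C_1\subseteq\Pi_p$; then $\psi_1|_{\Pi_p}$ and $\psi_2|_{\Pi_p}$ are both proportional to $C_1$, so some combination $\psi'=\psi_1-\lambda\psi_2$ vanishes identically on $\Pi_p$. Thus $V(\psi')=\Pi_p\cup\Pi_p'$ and $V(\psi_1,\psi_2)=V(\psi_2,\psi')=(\Pi_p\cap V(\psi_2))\cup(\Pi_p'\cap V(\psi_2))=C_1\cup C_2$, two planar conic sections of the quadric $V(\psi_2)$, with $|S\cap C_1|,|S\cap C_2|\geqslant\tfrac12 n-O(K^5)$ and $|S\cap(C_1\cup C_2)|\geqslant n-O(K)$.

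The remaining, and most delicate, step is to sharpen the error on each conic from $O(K^5)$ to $O(K)$. First, a point of $S$ lying off $C_1\cup C_2$ is incident with at least a positive-constant fraction of $n$ ordinary planes — projecting from it, $S$ maps onto the two conic images together with $O(K)$ points, and such a point is then forced onto many ordinary lines of that planar configuration — so only $O(K)$ points of $S$ lie off $C_1\cup C_2$. Second, the ruling of the quadric $V(\psi_2)$ matches the points of $C_1$ to those of $C_2$, and the point is to show that an imbalance $\big||S\cap C_1|-|S\cap C_2|\big|$ exceeding $O(K)$ forces more than $Kn^2$ ordinary planes. This is where the real work lies: the crude incidence count only controls the imbalance up to $O(Kn)$, so one has to exploit the rigidity of this ruling-matching — that unmatched points of one conic are collinear with points of the other only through the rulings of $V(\psi_2)$ — in order to extract the extra ordinary planes. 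I expect this balancing argument, rather than the more routine quadric bookkeeping, to be the main obstacle.
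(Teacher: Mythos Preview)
Your proposal has two genuine gaps. First, the rotten-edge bound is wrong. You only know $|S_\sigma| > n/(2d)$, so $|S \setminus S_\sigma|$ can be as large as $(1-1/(2d))n$; then almost every pair $\{r,s\}$ has a point outside $S_\sigma$, and almost every edge of $\Gamma$ is rotten --- the bound $O(n^3/d)$ held in Lemma~\ref{typeone} only because there $|S \setminus S''| \leqslant 4n/d$. The paper avoids this entirely: it uses only the slightly-bad-edge count (Lemma~\ref{slightlybad}) and a further pigeon-hole inside $S_\sigma$ to find a segment of $m \geqslant n/(15d^2K)$ rather good edges, without ever requiring the endpoints to lie in $S_\sigma$.

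Second, and more seriously, the claim that all but $O(K)$ points of $S$ lie on $V(\psi_1,\psi_2)$ cannot be obtained by transplanting the case analysis of Lemma~\ref{typeone}. That analysis needs the projected cubic $\gamma_p$ to be an irreducible cubic carrying all but $O(K)$ of the projection, so that $|S_{pq}| \geqslant n - O(K)$, and it uses irreducibility again to eliminate cases (i) and (ii) of Lemma~\ref{theRlemma}. Here the projection of $S$ from $p$ lies on a conic together with up to $2^{64}(dK)^4$ lines; the cubic $\gamma_p$ can absorb the conic and at most one line, so only about $\tfrac12 n$ points of the projection lie on $\gamma_p$, and $|S_{pq}|$ is nowhere near $n - O(K)$. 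Moreover case (ii) of Lemma~\ref{theRlemma} ($R$ in two planar conics) is no longer excluded --- indeed it is the expected outcome --- but it says nothing about $S \setminus R$. Your deduction that $\gamma_p = \sigma_p \cup m_p$, and everything after it, rests on this unproved step. The paper proceeds quite differently: it never tries to put $n-O(K)$ points on $V(\psi_1,\psi_2)$. Instead, it first rules out by a counting argument (this is where $K=o(n^{1/7})$ is actually used) the possibility that $\gamma_p$ is three lines for every $p$, then uses the tetra-grid combinatorics of the segment to show directly that the points $r_{-j}$ are coplanar, giving a conic with $m \approx n/K$ points of $S$; a repeat gives a second conic, and then projections from points of $S'$ bootstrap $m$ up to $\tfrac12 n - O(K^5)$, with a final ordinary-plane count improving the number of stray points to $O(K)$.
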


\begin{proof}

This proof is long, so we break it down into five claims.

Let $p\in S_{\sigma}$.

Claim 1. There are two linearly independent quadrics $V(\psi_1)$ and $V(\psi_2)$ which contain $p$ and more than $2n/(15d^2K)$ points of $S$.

A point $q$ of $S$ projects from $p$ onto a point $(q)_p$. By hypothesis, $\sigma_p$ contains at least $\frac{1}{2}n-2^{76}(dK)^5$ points of $S_p$, the projection of $S$. Each point of $\sigma_p \cap S_p$ is joined to $(q)_p$ by a line which meets at most one of the other points of $S_p \cap \sigma_p$. Therefore, there are at least $\frac{1}{2}(\frac{1}{2}n-2^{76}(dK)^5)>\frac{1}{5}n$ lines incident with $(q)_p$ and a further point of the projection. Hence, there are at least $\frac{1}{5}n$ planes containing the line joining $p$ and $q$, which are incident with a further point of $S$. This implies that the line $p^* \cap q^*$ contains at least $\frac{1}{5}n$ edges of $\Gamma$. 

Let $S'$ be the points of $S$ which project $S$ onto a set spanning at most $dKn$ ordinary lines. By counting pairs $(x,\pi)$ where $x \in S \setminus S'$ and $\pi$ is an ordinary plane containing $x$,
$$
|S \setminus S'|dKn \leqslant 3Kn^2,
$$
so $|S'| \geqslant (1-3/d)n$. 

Let $S_p'$ be the set of points of $S'$ which are projected from $p$ onto $\sigma_p$ but are not incident with a line of $L_p$. There are at least
$$
\tfrac{1}{2}n-3n/d-2^{76}(dK)^5-2^{65}(dK)^4
$$
points in $S_p'$, since any line of $L_p$ is incident with at most two points of $\sigma_p$ and $|S \setminus S'| \leqslant 3n/d$.

Hence, for $n$ large enough, $|S_p'|>\frac{1}{3}n$. 

By Lemma~\ref{slightlybad}, there are at most $690Kn^2$ slightly bad edges in $\Gamma$. 

Let $S_{\sigma}'$ be the subset of $S_{\sigma}$ with the property that $p \in S_{\sigma}'$ if and only if $p^*$ contains at most $d^2Kn$ slightly bad edges. By counting pairs $(x,e)$ where $x \in S_{\sigma} \setminus S_{\sigma}'$ and $e$ is a slightly bad edge in $x^*$,
$$
|S_{\sigma} \setminus S_{\sigma}'|d^2Kn \leqslant 2\times 690 Kn^2.
$$
By choosing $d$ large enough, we can assume
$$
|S_{\sigma}'| \geqslant (\frac{1}{2d}-\frac{1380}{d^2})n>\frac{n}{d^2}.
$$ 

Let $p\in S_{\sigma}'$. Then $p^*$ contains at most $d^2Kn$ slightly bad edges and so
by the pigeon-hole principle, there is a $q \in S_p'$ such that $p^* \cap q^*$ contains at most $3d^2K$ slightly bad edges. Again, by the pigeon-hole principle, since the line $p^* \cap q^*$ contains at least $\frac{1}{5}n$ edges of $\Gamma$, there is a segment $T$ of $p^* \cap q^*$ of 
$$
m \geqslant \frac{n}{15d^2 K}
$$ 
rather good edges. By Lemma~\ref{segment}, and labelling the points in the same way as in the proof of Lemma~\ref{segment}, there are linearly independent quadrics $V(\psi_1)$ and $V(\psi_2)$ containing  
$$
M_p=\{ r_{-m},\ldots,r_{-1},r_0,s_m,\ldots,s_1,s_0,p_{-1},p_0,p_1,q_{-1},q_0,q_1 \},
$$
where $p=p_0$ and $q=q_0$ and $M_p$ is a subset of $S$. Claim 1 is proven.

As before, define
$$
\phi_p(x)=b_1(p,x)\psi_2(x)-b_2(p,x)\psi_1(x).
$$
Clearly $M_p$ is contained in $V(\phi_p)$.

Claim 2. There is a $p \in S_{\sigma}'$ for which the projection of $V(\phi_p)$ from $p$, is the union of a conic $\sigma_p$ and a line $\ell_p$.

If $\phi_p$ is irreducible then each line of $L_p$ meets the projection of $V(\phi_p)$ in at most three points and the conic $\sigma_p$ meets the projection of $V(\phi_p)$ in at most six points. However, the projection of $V(\phi_p)$ contains at least $2m+7$ points of the projection and
$$
3 \times 2^{64}K^4+6 < 2m+7,
$$
so we conclude that $\phi_p$ is reducible.

Suppose that for all $p \in S_{\sigma}'$, $\phi_p$ factorises into three linear forms. Then the projection of $V(\phi_p)$ is three lines. These three lines contain the projection of $M_p$, whose points have the property that the projection of $q_i, r_j, s_k$ are collinear if and only if $i+j+k=0$. Therefore, one of the lines contains the projection of $\{r_{-m},\ldots,r_{-1},r_0\}$ and one of the other lines contains the projection of $\{ s_m,\ldots,s_1,s_0\}$. So we have that there are two planes $\pi_p$ and $\pi_p'$, incident with $p$, which contain at least $n/15d^2K$ points of $S$. Let $R_{{\sigma},p}=S_{\sigma}' \cap \pi_p$. For each $q \in R_{{\sigma},p}$, at least one of the planes $\pi_q$ or $\pi_q'$ is not $\pi_p$, so assume that $\pi_q \neq \pi_p$. Then $\pi_q \cap \pi_p$ is a line which contains at most two points of $S$ and so at most two points of $R_{{\sigma},p}$. Hence, there are at least $\frac{1}{2}|R_{{\sigma},p}|$ planes containing at least $m$ points of $S$. By counting triples of points of $S$ we have
$$
{n \choose 3}=\sum_{i \geqslant 3} {i \choose 3} \tau_i,
$$
where $\tau_i$ is the number of planes incident with exactly $i$ points of $S$. Since $m \geqslant n/15d^2K$ we have that $|R_{{\sigma},p}|=O(K^3)$. Therefore, there are at least $n/16d^2K$ points of $S$ in $\pi_p \setminus S_{\sigma}$, for $n$ sufficiently large. We will now exploit this to prove that $S$ has more than $n$ points.

Let $T$ be a subset of $S_{\sigma}'$ of size $\lceil n/(d^3K) \rceil$. For any $q \in T  \setminus \pi_p$, the planes $\pi_q$ and $\pi_q'$ intersect $\pi_p$ in a line containing at most two points of $S$, so there are at least 
$$
\frac{n}{15d^2K}-\frac{4n}{d^3K}
$$
points of $S \cap \pi_p$, which are not incident with a plane $\pi_q$ or $\pi_{q}'$, where $q \in T  \setminus \pi_p$. For $n$ and $d$ large enough, this number is at least $n/20d^2K$.
These points may be be incident with a plane $\pi_q$ or $\pi_{q}'$, where $q \in T \cap \pi_p$. But, recall that we have just proved that $|T \cap \pi_p|=O(K^3)$, so they are incident with at most $O(K^3)$ planes $\pi_q$, where $q \in T$.

Counting points of $S$ on $\pi_p$, for all $p \in T$, we get
$$
cK^3|S|>\frac{n}{d^3K} \times \frac{n}{20d^2K},
$$ for some constant $c$, which is a contradiction since $K=o(n^{\frac{1}{7}})$ and $|S|=n$.

Therefore, there is a point $p \in S_{\sigma}$, for which the projection of $V(\phi_p)$ from $p$, is the union of a conic $\sigma_p$ and a line $\ell_p$ and Claim 2 is proven.

Claim 3. There are two plane conics $\sigma$ and $\sigma'$ each containing at least $m$ points of $S$.

By Lemma~\ref{doubled}, the structure around a rather good edge of $\Gamma$ is a tetra-grid, so we have that if $i,j \in \{-1,0,1\}$, $k \in \{-m,\ldots,0 \}$, $\ell \in \{0,\ldots,m \}$ and $i+j+k+\ell=0$, then the points $p_i, q_j, r_k,s_{\ell}$ are co-planar.

Since $q_0 \in S_p'$, $q_0$ projects from $p=p_0$ onto $\sigma_p$, and we have that one of $\{ (r_{-j})_p, (s_j)_p \}$ is on $\ell_p$ and the other on $\sigma_p$. Without loss of generality, assume that $(r_0)_p \in \ell_p$ and $(s_0)_p \in \sigma_p$. The line $\ell_p$ contains at least $m$ points of $\{r_{-j},s_j \ | \ j=1,\ldots m\}$, so the point $(q_{-1})_p \in \sigma_p$, since $q_{-1}$ is on planes containing the four points $p_0,q_{-1},r_{-j+1},s_j$ of $S$, for all $j=1,\ldots,m$. If $(s_1)_p \in \ell_p$ then, since $(r_0)_p \in \ell_p$ and $p_0,q_{-1},r_0,s_1$ are co-planar, this would imply that $(q_{-1})_p \in \ell_p$, which we have just shown it is not. Hence, $(s_1)_p \in \sigma_p$ and $(r_{-1})_p \in \ell_p$. Similarly $(s_j)_p \in \sigma_p$ and $(r_{-j})_p \in \ell_p$, for all $j=0,\ldots,m$.

Hence, there is a plane $\pi$ which contains $p_0,r_0\ldots,r_m$. These points lie in $V(\psi_1,\psi_2)$ too. Since $S$ does not contain three collinear points we conclude that $p_0,r_0\ldots,r_m$ are contained in an irreducible conic $\sigma$, contained in $V(\psi_1, \psi_2)$. Moreover, $q=q_0$ is not contained in $\sigma$.

Since $q=q_0 \in S'$, we can apply Theorem~\ref{GTinter} to the projection of $S$ from $q$. The projection of $S$ from $q$, projects the points of $\sigma$ onto a conic. There are at least $m$ points of $S$ on the conic $\sigma$, we can rule out case (i), since $m\geqslant n/(15d^2K)$. Likewise, we can rule out case (iii), since $K=o(n^{\frac{1}{7}})$ implies $m>2^{87}K^6+2^{17}K$ for $n$ sufficiently large. Therefore we are in case (ii), so $q \in S_{\sigma}$. The projection of $V(\phi_q)$ from $q$ cannot be three lines or an irreducible cubic since the $m$ points of $S \cap \sigma$ are projected onto a conic. Therefore, the projection of $\phi_q$ from $q$ is a conic and a line and we can repeat the first part of Claim 3, replacing $p$ with $q$ and conclude that there is an irreducible conic $\sigma'$, containing $q$ and more than $m$ other points of $S$. Hence, Claim 3 is proven. Let $\pi'$ denote the plane containing $\sigma'$.

Claim 4. The union $\sigma \cup \sigma'$ contains at least $n-2^{76}K^5$ points of $S$.

If there is a point $p_{\infty}$ that projects $\sigma$ onto $\sigma'$ then $\sigma$ and $\sigma'$ lie in the intersection of two linearly independent quadrics, one a plane pair and the other a conical cone. By Lemma~\ref{2pis}, there is at most one other point $p_{\infty}'$ which projects $\sigma$ onto $\sigma'$.

Suppose that $w$ is a point in $S' \setminus (\pi \cup \pi' \cup \{ p_{\infty}, p_{\infty}' \})$. Applying Theorem~\ref{GTinter}, with $K$ replaced by $dK$, we have that the projection of $S$ from $w$ is either one of (i), (ii) and (iii). The points of $S \cap \sigma$ and $S \cap \sigma'$ in $\pi \cup \pi'$ are projected onto two conics each containing at least $m$ points of the projection of $S$. As in the previous paragraph we can rule out cases (i) and (iii) and case (ii) is also ruled out since the additional $2^{64}K^4$ lines cannot contain $m$ points of a conic.

Therefore all points of $S' \setminus  \{ p_{\infty}, p_{\infty}' \} $ are contained in $\pi \cup \pi'$.

Now we apply Theorem~\ref{GTinter} to the the projection of $S$ from a point of $S' \cap \pi$. The $m$ points of $S \cap \sigma'$ are projected onto a conic so, arguing as before, we are in case (ii). Therefore, the points of $(S \cap\pi')\setminus \sigma'$ are contained in at most $2^{64}K^4$ lines. Since these points are co-planar, and $S$ does not contain three collinear points, we deduce that there are at most $2^{65}K^4$ of them. Likewise, projecting from a point of $S' \cap \pi'$ we see that at most $2^{65}K^4$ points of $S\cap \pi$ are not in the conic $\sigma$. Therefore, $\sigma \cup \sigma'$ contains all but at most $O(K^4)$ points of $S \cap (\pi \cup \pi')$.

Suppose that $p$ is a point of $S \setminus (S' \cup\{ p_{\infty},p_{\infty}'  \})$ which is not contained in $\pi \cup \pi'$. Let $N_p$ be the set of points of $\sigma'$ such that the projection of $\sigma$ from $p$ onto $\pi'$ meets $\sigma'$ in $N_p$. By Bezout's theorem, $N_p$ consists of at most four points. There are at most $3n/d+O(K^4)$ points in $S \setminus (\pi \cup \pi')$ so there is a point 
$q \in S' \cap \sigma'$ which is not in $N_p$ for any $p \in S \setminus (S' \cup \pi \cup \pi' \cup \{ p_{\infty},p_{\infty}'  \})$.

The projection of $S$ from $q$ onto $\pi$ projects $S \setminus (S' \cup \pi \cup \pi' \cup \{ p_{\infty} , p_{\infty}' \})$ onto the additional $K^4$ lines given by case (ii) of Theorem~\ref{GTinter}. But $q$ projects at least $\frac{1}{2}n - 2^{76}K^5$ points onto $\sigma$, according to Theorem~\ref{GTinter}, so there are at least $\frac{1}{2}n - 2^{76}K^5$ points in $\sigma \cap S$. Similarly, there are at least $\frac{1}{2}n - 2^{76}K^5$ points in $\sigma' \cap S$ and Claim 4 is proven.

Claim 5. The union $\sigma \cup \sigma'$ contains at least $n-O(K)$ points of $S$.

Let $X$ be a planar set of points which is, after a suitable projective transformation, the points of a regular polygon and the points at infinity where the bisecants to the regular polygon meet. 
By \cite[Lemma 7.4]{GT2013}, the projection of $S$ from a point of $\sigma \cap S'$, differs from $X$ by at most $O(K^5)$ points. By \cite[Lemma 7.6]{GT2013}, a point $u$ which is not in $X$, is incident with at least $|X|-O(1)$ lines which contain exactly one point of $X$. 

Let $W$ be the set of points of $S$ which are not contained in $\sigma \cup \sigma'$. Let $\epsilon=|W|$. Without loss of generality, $|W \setminus \pi|  \geqslant \frac{1}{2}\epsilon$. Let $w \in W \setminus (\pi \cup  \{ p_{\infty},p_{\infty}'  \})$. The projection of $\sigma$ from $w$ onto $\pi'$ meets $\sigma'$ in at most four points. So for $|S' \cap \sigma|-4$ points $q \in S' \cap \sigma$, $q$ projects $w$ onto a point of $\pi'$ incident with at least $n-cK^5-\epsilon$ ordinary lines, for some constant $c$. Therefore, $S$ spans at least 
$$
(\tfrac{1}{2}\epsilon -2) (|S'\cap \sigma|-4)(n-cK^5-\epsilon)
$$ 
ordinary planes. Since $|S' \cap \sigma|>\frac{1}{3}n$ and $S$ spans at most $Kn^2$ ordinary planes, we have that $\epsilon=O(K)$.

\end{proof}

In the third lemma of its type, we consider the case that $S$ has a few points which project the other points of $S$ onto a set of type (iii) of Theorem~\ref{GTinter}.

\begin{lemma} \label{typethree} 
Let $S$ be a set of $n$ points in $\mathrm{PG}_3({\mathbb R})$, no three collinear and not all co-planar, spanning at most $Kn^2$ ordinary planes, where $K=o(n^{\frac{1}{6}})$. Suppose that there are three points of $S$ which project $S$ onto a set spanning at most $dKn$ ordinary lines, for some constant $d$, and which is contained in the union of $2^{16}K$ lines and an additional $2^{87}K^6$ points. Then all but at most $2K$ points of $S$ are contained in a plane.
\end{lemma}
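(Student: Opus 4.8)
The plan is to use the three type-(iii) projections to trap all but $O(K^6)$ points of $S$ in a single plane $\tau=\langle p_1,p_2,p_3\rangle$, and then to use the scarcity of ordinary planes to sharpen this to $2K$.

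\emph{Covering by planes.} Since no three points of $S$ are collinear, projection from any $p\in S$ is injective on $S\setminus\{p\}$; a line of the projection from $p$ is the image of a plane through $p$, while the $O(K^6)$ exceptional points of the projection are images of individual points of $S$. Hence for each $p_i$ ($i=1,2,3$) there is a set $F_i\subseteq S$ with $|F_i|=O(K^6)$ such that $S\setminus F_i$ is covered by at most $2^{16}K$ planes through $p_i$. Combining the families through $p_1$ and $p_2$: if a point of $S$ lies on a plane $A\ni p_1$ and a plane $B\ni p_2$ with $A\ne B$ it lies on the line $A\cap B$, which meets $S$ in at most two points, and there are at most $(2^{16}K)^2$ such lines; so all but $O(K^6)$ points of $S$ lie on one of the $\le 2^{16}K$ planes through the line $\overline{p_1p_2}$, and likewise through $\overline{p_1p_3}$.

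\emph{Reduction to one plane.} Let $\tau=\langle p_1,p_2,p_3\rangle$ (a genuine plane, as no three of these points are collinear). If $q\in S$ lies off $\tau$ and is not one of the $O(K^6)$ exceptional points, then the plane through $\overline{p_1p_2}$ containing $q$ is $\ne\tau$, so it misses $p_3$, and similarly the plane through $\overline{p_1p_3}$ containing $q$ misses $p_2$; these two planes are therefore distinct and meet in the line $\overline{p_1q}$, which carries at most one point of $S$ besides $p_1$. As there are at most $(2^{16}K)^2$ such lines, $t:=|S\setminus\tau|=O(K^6)$, which is $o(n)$ since $K=o(n^{1/6})$; and $t\ge 1$ because $S$ is not coplanar.

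\emph{Sharpening to $2K$.} Fix $w\in S\setminus\tau$ and count the planes $\langle w,q_1,q_2\rangle$ with $q_1\ne q_2$ in $S\cap\tau$. Such a plane meets $\tau$ in $\overline{q_1q_2}$, which contains exactly $q_1,q_2$ from $S$, so it is ordinary unless some $w'\in(S\setminus\tau)\setminus\{w\}$ lies on it. For fixed $w'$ the point $z=\overline{ww'}\cap\tau$ is not in $S$ (else $w,w',z$ would be three collinear points of $S$), and the ``bad'' pairs $\{q_1,q_2\}$ are exactly those collinear with $z$; since no three points of $S$ are collinear there are at most $\tfrac12(n-t)$ of them. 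Hence $w$ lies on at least $\binom{n-t}{2}-(t-1)\tfrac{n-t}{2}=\tfrac12(n-t)(n-2t)$ ordinary planes of this shape, and these are pairwise distinct over all $w$ (such a plane has a unique point of $S$ off $\tau$, namely $w$, and the line it cuts on $\tau$ recovers $\{q_1,q_2\}$). Therefore $Kn^2\ge \tfrac12 t(n-t)(n-2t)$; as $t=o(n)$ this already forces $t=O(K)$, and feeding that estimate back in gives $t\le 2K+O(K^2/n)$, hence $t\le 2K$ for $n$ sufficiently large. The main obstacle is precisely this last step: the combinatorial covering only yields $t=O(K^6)$, and one must convert the bound $v_3\le Kn^2$ into the sharp conclusion; the hypothesis $K=o(n^{1/6})$ enters exactly here, making the first estimate $t=O(K^6)$ be $o(n)$, which is what lets the bootstrap drive $t$ down to $2K$.
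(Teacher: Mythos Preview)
Your proof is correct and follows essentially the same route as the paper's: use two of the three type-(iii) projections to trap all but $O(K^6)$ points of $S$ in a pencil of planes through a line, bring in the third projection to collapse this to the single plane $\tau=\langle p_1,p_2,p_3\rangle$, and then count ordinary planes through each $w\in S\setminus\tau$ to bootstrap $t=O(K^6)$ down to $t\le 2K$. Your final counting argument (subtracting at most $(t-1)(n-t)/2$ bad pairs per $w$, one batch for each other point of $S\setminus\tau$) is in fact more explicit than the paper's, which records the cruder bound $\binom{n-\epsilon}{2}-\epsilon$ without spelling out the subtraction.
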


\begin{proof}
Let $p,q,r$ be the three points of $S$ which project $S$ onto a set spanning at most $dKn$ ordinary lines, for some constant $d$, and which is contained in the union of $2^{16}K$ lines and an additional $2^{87}K^6$ points. Let $L_p$ denote the set of planes, incident with $p$, and projected from $p$ onto one of the $2^{16}K$ lines. Similarly, define $L_q$ and $L_r$.

There are at most $2(2^{16}K)^2$ points of $S$ in the intersection of a plane of $L_p$ containing $p$ and not $q$, with a plane of $L_q$ containing $q$ and not $p$, since $S$ contains no three collinear points. Therefore, there are at least $n-2^{87}K^6-2^{33}K^2$ points of $S$ contained in at most $2^{16}K$ planes containing the line $\ell_{pq}$, which joins $p$ and $q$.

Similarly, there are at least $n-2^{87}K^6-2^{33}K^2$ points of $S$ contained in at most $2^{16}K$ planes containing the line $\ell_{pr}$, the line joining $p$ and $r$. However, there are at most $2(2^{16}K)^2$ points in the intersection of a plane of $L_r$ containing $r$ and not $q$, with a plane of $L_q$ containing $q$ and not $r$, so we conclude that at least $n-2^{87}K^6-2^{34}K^2$ points of $S$ are contained in the plane $\pi$ spanned by $p$, $q$ and $r$.

Let $\epsilon$ be the number of points in $S \setminus \pi$. By the previous paragraph, $\epsilon=o(n)$. Each point of $S\setminus \pi$ is incident with at least ${n-\epsilon \choose 2}-\epsilon$ ordinary planes which contain exactly one point of $S \setminus \pi$. Hence, there are at least $\epsilon({n-\epsilon \choose 2}-\epsilon)$ ordinary planes. Since $S$ has at most $Kn^2$ ordinary planes, we have that $\epsilon \leqslant 2K$, for $n$ large enough. 
\end{proof}

\begin{proof} (of Theorem~\ref{inter})

Let $d$ be a large constant and let $S'$ be the points of $S$ that project $S$ onto a set with at most $dKn$ ordinary lines. By counting pairs $(x,\pi)$ where $x \in S \setminus S'$ and $\pi$ is an ordinary plane containing $x$,
$$
|S \setminus S'|dKn \leqslant 3Kn^2,
$$
so $|S'| \geqslant (1-3/d)n$. 

If there are $(1-4/d)n$ points of $S'$ of type (i) of Theorem~\ref{GTinter} then Lemma~\ref{typetwo} applies and we have conclusion (i).

If there are $n/(2d)$ points of $S'$ of type (ii) of Theorem~\ref{GTinter} then Lemma~\ref{typetwo} applies and we have conclusion (ii). 

If there are three points of $S'$ of type (iii) of Theorem~\ref{GTinter} then Lemma~\ref{typethree} applies and we have conclusion (iii). 
\end{proof}

\section{The detailed structure theorem}

In this section we shall prove Theorem~\ref{detailed} by analysing case (ii) of Theorem~\ref{inter}.

\begin{proof} (of Theorem~\ref{detailed}) By hypothesis, $S$ spans at most $\frac{1}{2}n^2-cn$ ordinary planes. 

If we are in case (i) of Theorem~\ref{inter} then $S$ would span at least
$$
\tfrac{1}{3}(\tfrac{3}{2}n-c')(n-c'')
$$
ordinary planes, for some constants $c', c''$, which is a contradiction for $c$ large enough.

If we are in case (iii) of Theorem~\ref{inter} then $S$ would span at least
$$
\tfrac{1}{2}n^2-dn
$$
ordinary planes, for some constant $d$, which is a contradiction for $c>d$.

Therefore, we can assume that we are in case (ii) of Theorem~\ref{inter}. Theorem~\ref{inter} implies that there is a quadric $\psi$ whose conic sections $\psi \cap \pi_1$ and $\psi \cap \pi_2$ contain all but $O(1)$ points of $S$.

We can choose a basis so that $\pi_1$ is the plane $X_4=0$ and $\pi_2$ is the plane $X_3=0$. After a suitable projective transformation we can assume that the conic section in $\pi_1$ is the set of solutions of the equation 
$$
X_1^2+X_2^2=X_3^2.
$$
The quadric $\psi$ is the set of zeros of a quadratic form
$$
X_1^2+X_2^2-X_3^2-eX_4^2+2aX_1X_4+2bX_2X_4+dX_3X_4,
$$
for some $a,b,e,d \in {\mathbb R}$.
 
Since $\psi$ intersects the plane $\pi_2$ in a conic, the equation
$$
(X_1+aX_4)^2+(X_2+bX_4)^2=(e+a^2+b^2)X_4^2
$$
must have solutions. Therefore, $e>0$ and we can scale the basis elements so that $e+a^2+b^2=1$.

Let $S_i=S\cap \pi_i \cap \psi$, for $i=1,2$. By Theorem~\ref{inter}, we have that $|S_i| \geqslant \frac{1}{2}n-c$, for some constant $c$. Let $p$ be a point of $S_2$ incident with at most $dn$ ordinary planes, for some constant $d$. By \cite[Lemma 7.4]{GT2013}, the projection of $S$ from $p$ differs from a B\"or\"oczky example by at most $O(1)$ points. Therefore, $S_1$ contains at least $\frac{1}{2}n-O(1)$ points of 
a regular $m_1$-gon, for some $m_1 \geqslant \frac{1}{2}n-c'$, for some constant $c'$.
 
As in Section~\ref{prism}, we define
$$
p_i=(\cos (2\pi i/m_1), \sin (2 \pi i/m_1), 0 , 1 )
$$ 
and define
$$
R_1=\{ p_i \ | \ i=0,\ldots,m_1-1 \}.
$$
Then $S_1$ differs from $R_1$ by at most $O(1)$ points.

Let
$$
q_k=(\cos (2\pi (k+\delta)/m_2)-a, \sin (2 \pi (k+\delta)/m_2)-b, 0 , 1 )
$$
for some $0 \leqslant \delta <1$ and define
$$
R_{2}=\{ q_k  \ | \ k=0,\ldots,m_2-1 \}.
$$
Then $S_2$ differs from $R_2$ by at most $O(1)$ points.

The line joining the points $p_i$ and $p_j$ meets the line $\ell_{\infty}= \pi_1 \cap \pi_2$ in the point
$$
s_{\theta}=(\sin (\pi \theta/m_1), \cos (\pi \theta/m_1),0,0),
$$
where $\theta=-(i+j)$ mod $m_1$. 

Let $D_1=\{ s_{\theta} \ | \ \theta=0,\ldots,m_1-1 \}$.

The line joining the points $q_k$ and $q_{\ell}$ meets the line $\ell_{\infty}= \pi_1 \cap \pi_2$ in the point
$$
t_{\theta}=(\sin (\pi \theta/m_2), \cos (\pi \theta/m_2),0,0),
$$
where $\theta=-(k+\ell+2\delta)$ mod $m_2$.

Let $D_2=\{ t_{\theta} \ | \ \theta=0,\ldots,m_2-1 \}$.

Any point of $D_1 \setminus D_2$ or $D_2 \setminus D_1$ is incident with at least $\frac{1}{4}n^2-O(n)$ ordinary planes. Since $S$ spans at most $\frac{1}{2}n^2-cn$ ordinary planes, there are at most three such points. Since both $m_1$ and $m_2$ are at least $\frac{1}{2}n-c'$, for some constant $c'$, we conclude that $D_1=D_2$ and so $m_1=m_2=m$.

If the points $p_i$, $p_j$, $q_k$ span a non-ordinary plane then there is a point $q_{\ell}$ incident with the plane, for some $\ell \neq k$, where
$$
i+j=k+\ell+2\delta.
$$
Since $i$, $j$, $k$ and $\ell$ are all integers, we have that either $\delta=0$ and $S$ differs by $O(1)$ points from a prism, or $\delta=\frac{1}{2}$ and $S$ differs by $O(1)$ points from an anti-prism.

Let $X$ denote the prism or anti-prism containing $n-O(1)$ points of $S$ and suppose that $x \in S \setminus X$. We will prove that $x$ is incident with at least $\frac{1}{4}n^2-O(n)$ ordinary planes. Assuming this to be the case, since there are at least $\frac{1}{4}n^2-O(n)$ ordinary planes spanned by $X \cap S$, this implies that $S$ is contained in $X$. Furthermore, each point of $X \setminus S$ is incident with $\frac{1}{8}n^2-O(n)$ ordinary planes, since each point of $X$ is incident with $\frac{1}{8}n^2-O(n)$ planes containing four points of $X$. Therefore, we conclude that $|X\setminus S| \leqslant 1$.

It remains to prove that $x\in S \setminus X$ is incident with at least $\frac{1}{4}n^2-O(n)$ ordinary planes.

By Lemma~\ref{2pis}, there are at most two points that project $\sigma$ onto $\sigma'$, which we denote by $p_{\infty}$ and $p_{\infty}'$.

Suppose first that $x \not\in \pi_1 \cup \pi_2 \cup \{p_{\infty}, p_{\infty}' \}$. Let $p$ be a point of $S \cap X \cap \pi_1$ and let $x_p$ denote the point of the projection of $x$ from $p$ onto the plane $\pi_2$. If $x_p \not\in X$ or is not the centre of the regular polygon $X \cap \pi_2$ then, by \cite[Corollary 7.6]{GT2013}, $x_p$ is incident with at least $\frac{1}{2}n+c$ lines incident with just one point of $X \cap \pi_2$, for some constant $c$. There are at most four points in $X$ that project $x$ onto $\sigma'$, since $x$ projects $\sigma$ onto a conic meeting $\sigma'$ in at most four points. Therefore, $x$ is incident with at least $(\frac{1}{2}n-c')(\frac{1}{2}n+c)=\frac{1}{4}n^2-O(n)$ ordinary planes, for some constant $c'$.

Suppose that $x\in \pi_1 \cup \pi_2$ and without loss of generality, that $x\in \pi_1$. Let $Y$ be the set of points on $\pi_1 \cap \pi_2$ incident with planes that are incident with four points of $X$. (So $(X \cap \pi_1) \cup Y$ is a B\"or\"oczky example in the plane $\pi_1$). By \cite[Corollary 7.6]{GT2013}, there are $n-O(1)$ lines incident with $x$ and precisely one point of $(X \cap \pi_1) \cup Y$, and so there are $\frac{1}{2}n-O(1)$ lines $\ell$ of $\pi_1$ incident with $x$ and precisely one point of $X \cap \pi_1$ which intersect $\pi_1 \cap \pi_2$ in a point not in $Y$. Therefore, for any point $p \in \pi_2 \cap S$, the plane spanned by $\ell$ and $p$ is an ordinary plane incident with $x$. Thus, $x$ is incident with at least $\frac{1}{4}n^2-O(n)$ ordinary planes.

Finally, suppose that $x \in \{p_{\infty}, p_{\infty}' \}$. Then $X$ must be an anti-prism, since $S$ does not contain three collinear points. But then every plane spanned by $x$, a point of $S \cap X \cap \pi_1$ and a point of $S \cap X \cap \pi_2$ is ordinary.

\end{proof}

\section{Ordinary circles} \label{ordcircles}

Let $S$ be a set of points in the real plane. For any subset of two points of $S$, recall that we say that the unique line passing though them is {\em ordinary} if it contains no other points of $S$. For any subset of three non-collinear points of $S$, we say that the unique circle passing though them is {\em ordinary} if it contains no other points of $S$. 

Theorem~\ref{inter} has the following corollary. A more detailed structure theorem for ordinary circles, with a stronger hypothesis, is obtained in \cite{LMMSSdZ2016}.

\begin{theorem}
Let $S$ be a set of $n$ points in the real plane spanning at most $Kn^2$ ordinary circles, for some function $K=K(n)$. If $K=o(n^{\frac{1}{7}})$ then, for $n$ large enough, all but at most $O(K)$ points of $S$ are contained in a curve of degree at most four.
\end{theorem}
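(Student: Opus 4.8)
The plan is to transfer the problem to $\mathrm{PG}_3(\mathbb{R})$ by the standard lifting onto a paraboloid and then apply Theorem~\ref{inter}. Choosing affine coordinates so that $S$ is a finite subset of the plane $z=0$, let $P$ be the paraboloid $z=x^2+y^2$ and let $\hat S=\{(a,b,a^2+b^2):(a,b)\in S\}\subset P$ be the lift of $S$; the vertical projection $(x,y,z)\mapsto(x,y)$ restricts to a bijection $P\to\mathbb{R}^2$ carrying $\hat S$ back to $S$. I would first record the classical dictionary between plane sections of $P$ and circles and lines in the plane: a non-vertical plane meets $P$ in an ellipse projecting onto a circle, a vertical plane meets $P$ in a parabola projecting onto a line, and in each case the correspondence is a bijection. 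Consequently a plane meeting $\hat S$ in exactly three points corresponds either to an ordinary circle of $S$ (non-vertical case) or to a line meeting $S$ in exactly three points (vertical case), and conversely. One must check here that three points lying on a genuine circle are non-collinear, so that they do determine an ordinary circle, and that a plane containing three points of $\hat S$ meets $P$ in a non-degenerate conic (not a single point), so that these really are all the ordinary planes of $\hat S$.

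Next I would bound the number of ordinary planes of $\hat S$. Since $P$ contains no line, $\hat S$ has no three collinear points; and if $\hat S$ is coplanar then $S$ lies on a single conic section of $P$, hence on a curve of degree at most two, and there is nothing to prove, so we may assume $\hat S$ is not coplanar. Distinct lines meeting $S$ in exactly three points use up disjoint triples among the $\binom{n}{2}$ pairs of points of $S$, so there are fewer than $n^2/6$ of them; together with the fewer than $Kn^2$ ordinary circles this shows that $\hat S$ spans fewer than $K'n^2$ ordinary planes, where $K'=K+1$, which is $o(n^{1/7})$ and at least $1$. Theorem~\ref{inter} therefore applies to $\hat S\subset\mathrm{PG}_3(\mathbb{R})$.

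It then remains to pull each of the three conclusions back down through $P\to\mathbb{R}^2$. In every case, all but $O(K')=O(K)$ points of $\hat S$ lie on $P\cap V$ for some variety $V$ of degree at most two that does not contain $P$: in case (iii), $V$ is a plane; in case (ii), $V$ is the union of the two planes spanning the two conic sections, the union of the two conics covering all but $O(K)$ points of $\hat S$ because distinct planar conics meet in only $O(1)$ points of the three-collinear-free set $\hat S$; and in case (i), $V$ is whichever of the two distinct quadrics differs from $P$ (at least one does, since they are distinct). Substituting $z=x^2+y^2$ into the degree-$\le 2$ equation of $V$ produces a polynomial in $x,y$ of degree at most four which is not identically zero, precisely because $V$ does not contain the irreducible surface $P$; its zero set is a plane curve of degree at most four containing all but $O(K)$ points of $S$, which is the assertion.

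The delicate part lies in the set-up rather than the conclusion. First, one must make the plane-section dictionary precise enough to obtain an \emph{exact} identification of the ordinary planes of $\hat S$ with the ordinary circles and exactly-three-point lines of $S$, watching tangent and degenerate sections and the non-collinearity of points on a circle. Second, in case (ii) one must confirm that it is the \emph{union} of the two conics, not merely one of them, that captures all but $O(K)$ points of $S$; this is exactly what is established in the course of proving Theorem~\ref{inter} (Lemma~\ref{typetwo}), and it also follows from the elementary intersection bound noted above once one knows each conic contains $\tfrac12 n-O(K)$ points. Everything after the invocation of Theorem~\ref{inter} is a routine substitution and degree count.
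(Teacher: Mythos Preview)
Your proof is correct and follows the same overall strategy as the paper: lift $S$ to a quadric surface in $\mathrm{PG}_3(\mathbb{R})$, apply Theorem~\ref{inter} to the lift, and read off a quartic curve by intersecting the ambient quadric with the second quadric supplied by that theorem. The differences are in the choice of model and a couple of bookkeeping details.

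The paper lifts $S$ by inverse stereographic projection onto a sphere $\Omega$ and \emph{adds the north pole} $x$ to the lifted set $T$. With that extra point present, the planes of $T$ through $x$ correspond to lines of $S$, so the surplus ordinary planes are exactly the ordinary lines of $S$, bounded by $\tfrac12 n^2+O(n)$. The back–projection step is then an explicit elimination: writing $\psi$ for $\Omega$ and $\psi'$ for the second quadric, one computes $\alpha^2\psi'$ modulo $\psi$ to obtain a nonzero quartic in the plane coordinates.

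You instead lift to the paraboloid $z=x^2+y^2$ and do \emph{not} adjoin an extra point. This makes the dictionary slightly different: vertical planes now correspond to lines of $S$ meeting $\hat S$ in exactly three points rather than to ordinary lines, and your bound $n^2/6$ on the number of $3$-point lines is the right replacement. The pay-off is that the final step becomes the bare substitution $z\mapsto x^2+y^2$ into the equation of $V$, avoiding the algebraic manipulation the paper performs. Your case-by-case treatment of the three alternatives of Theorem~\ref{inter} is also a bit more explicit than the paper's, which simply observes that in every case one can find a quadric $\Omega'\neq\Omega$ containing all but $O(K)$ points and then proceeds uniformly; your inclusion--exclusion argument in case~(ii) is exactly what is needed to see this directly from the stated conclusion.

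Both routes rest on the same key input, Theorem~\ref{inter}, and yield the same conclusion; your paraboloid model just streamlines the two auxiliary computations.
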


\begin{proof}
Embed the plane in three-dimensional space and consider a sphere $\Omega$ whose tangent plane $\pi$ at the south pole contains the set $S$. Let $T$ be the set of points obtained by inversely projecting $S$ from the north pole $x$ onto $\Omega$, together with the point $x$. By the well-known property of stereographic projection, circles in $\pi$ are lifted to plane sections of $\Omega$. An ordinary circle spanned by $S$ lifts to an ordinary plane spanned by $T$ not incident with $x$, and conversely an ordinary plane spanned by $T$ not incident with $x$ is projected onto an ordinary circle spanned by $S$.  An ordinary line spanned by $S$ lifts to an ordinary plane spanned by $T$ incident with $x$, and conversely an ordinary plane spanned by $T$ incident with $x$ is projected onto an ordinary line spanned by $S$.  
Since $S$ spans at most $\frac{1}{2}n^2+O(n)$ ordinary lines we conclude that $T$ spans at most $(K+\frac{1}{2})n^2+O(n)$ ordinary planes. 

Since $T$ is contained in the sphere $\Omega$, it does not contain three collinear points and the points are not co-planar. Therefore, by Theorem~\ref{inter}, all but at most $O(K)$ points of $T$ are contained in the intersection of two quadrics. Let $\Omega'$ denote any other quadric containing all but at most $O(K)$ points of $T$. 

Let $\psi$ and $\psi'$ denote the quadratic forms which define the quadrics $\Omega$ and $\Omega'$ respectively.

After a suitable choice of basis, we can assume that $x$ is the projective point $(0,0,0,1)$. Then 
$$
\psi=\phi(x_1,x_2,x_3)+x_4\alpha(x_1,x_2,x_3)
$$
for some quadratic form $\phi$ in ${\mathbb R}[x_1,x_2,x_3]$ and linear form $\alpha$ in ${\mathbb R}[x_1,x_2,x_3]$, and
$$
\psi'=\phi'(x_1,x_2,x_3)+x_4\alpha'(x_1,x_2,x_3)+\gamma x_4^2,
$$
for some quadratic form $\phi'$ in ${\mathbb R}[x_1,x_2,x_3]$, linear form $\alpha'$ in ${\mathbb R}[x_1,x_2,x_3]$, and $\gamma \in {\mathbb R}$.

Computing modulo $\psi$,
$$
\alpha^2 \psi'=\alpha^2\phi'-\alpha\alpha' \phi+\phi^2 \gamma.
$$
If this polynomial is zero then $\alpha^2 \psi'= \phi''\psi$, for some quadratic form $\phi''$. Since the set of zeros of $\psi$ is a sphere, $\alpha$ does not divide $\psi$. Therefore, $\phi''$ is a multiple of $\alpha^2$, which implies $\psi'$ is a multiple of $\psi$, which it is not since they are linearly independent.
 
The set of zeros of this non-zero homogeneous polynomial in $x_1,x_2,x_3$ of degree four contains all but $O(K)$ of the points of $S$. Note that, it may be the case that the polynomial factorises and the points of $S$ are contained in the set of zeros of a lower degree polynomial.
\end{proof}

Conversely, there are examples of sets $S$ of $n$ points contained in curves of degree $2$, $3$ and $4$, which span less than $\frac{1}{2}n^2+O(n)$ ordinary circles, see \cite{LMMSSdZ2016}. For example, in \cite[Section 4.1]{LMMSSdZ2016} there is a set of $n$ points in the plane contained in an ellipse, which spans at most $\frac{1}{2}n^2+O(n)$ ordinary circles. Lifting this example to the sphere $\Omega$ by inversely projecting from the north pole $x$, as in the proof above, and then projecting stereographically from a point of $T \setminus \{x\}$, one obtains a planar set which spans $\frac{1}{2}n^2+O(n)$ ordinary circles, contained in a curve of degree three. Projecting from a point of $\Omega$ which does not lie on the other quadric containing $T$ gives a planar set which spans $\frac{1}{2}n^2+O(n)$ ordinary circles, contained in a curve of degree four. In \cite{LMMSSdZ2016}, the precise minimum number of ordinary circles is determined and all point sets which come close to this minimum are classified.

\section*{Appendix} \label{chasles}

In this appendix we include a proof Theorem~\ref{7gives8}.

Let $T$ be a subset of points of $\mathrm{PG}_{k-1}({\mathbb R})$. Define $I(T)$ to be the subspace of homogeneous polynomials of degree $t$ in $k$ variables with the property that $f \in I(T)$ if and only if $T \subseteq V(f)$.

\begin{theorem} \label{chaslesextend}
Let $S$ be a set of ${k+t-1 \choose t}-r+1$ points in $\mathrm{PG}_{k-1}({\mathbb R})$, for some positive integer $r$. Let $S'$ be a subset of $S$ of size $|S|-1$.
Suppose that the dimension of $I(S')$ is $r$ and 
$$
S \subseteq V(f_1,\ldots,f_r),
$$
for some $f_1,\ldots,f_r$, linearly independent homogeneous polynomials in $k$ variables of degree $t$.

If $S' \subseteq V(f)$, for some homogeneous polynomial $f$ of degree $t$ then $S \subseteq V(f)$.
\end{theorem}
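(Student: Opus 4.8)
The plan is to reduce everything to the single assertion that $I(S)=I(S')$ as subspaces of the ${k+t-1\choose t}$-dimensional vector space of homogeneous polynomials of degree $t$ in $k$ variables. Once this is established the conclusion is immediate, since the hypothesis $S'\subseteq V(f)$ says exactly that $f\in I(S')$, hence $f\in I(S)$, hence $S\subseteq V(f)$.

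To prove $I(S)=I(S')$ I would bound $\dim I(S)$ from both sides. On one hand, $S\subseteq V(f_i)$ for each $i$ means $f_1,\dots,f_r\in I(S)$; since these forms are linearly independent, $\dim I(S)\geqslant r$. On the other hand, $S'\subseteq S$ gives the inclusion of subspaces $I(S)\subseteq I(S')$, so $\dim I(S)\leqslant\dim I(S')=r$ by hypothesis. Hence $\dim I(S)=r=\dim I(S')$, and an inclusion of finite-dimensional subspaces of equal dimension is an equality, so $I(S)=I(S')$, which completes the argument.

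I do not expect a genuine obstacle here: the content is the elementary observation that the existence of $r$ linearly independent degree-$t$ forms vanishing on the \emph{larger} set $S$ already forces $\dim I(S)$ up to the value $\dim I(S')$, so that the one extra point of $S\setminus S'$ imposes no new vanishing condition — this is precisely the mechanism behind Cayley--Bacharach-type results such as Theorem~\ref{7gives8}. It is worth remarking that the prescribed cardinality ${k+t-1\choose t}-r+1$ of $S$ is not actually used in this deduction; it merely records the natural setting, namely one more than the number of points in general position needed to cut out an $r$-dimensional linear system of degree-$t$ forms, in which the hypotheses $\dim I(S')=r$ and $S\subseteq V(f_1,\dots,f_r)$ are the expected ones.
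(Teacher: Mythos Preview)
Your proof is correct and follows essentially the same dimension-counting argument as the paper. The paper phrases it as showing that the span $U$ of $f_1,\dots,f_r$ equals $I(S')$ (since $U\subseteq I(S')$ and both have dimension $r$), whence $f\in U$ and $f$ vanishes on $S$; your formulation via $I(S)=I(S')$ is the same idea with $I(S)$ playing the role of $U$.
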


\begin{proof}
Since $f_1,\ldots,f_r$ are linearly independent homogeneous polynomials of degree $t$, they span an $r$-dimensional subspace $U$ of the space of homogeneous polynomials of degree $t$ in $k$ variables. Since $U \subseteq I(S')$ and the dimension of $I(S')$ is $r$, we have that $U=I(S')$. Since $S' \subseteq V(f)$, we have that $f \in I(S')=U$. Hence, $f$ is a linear combination of  $f_1,\ldots,f_r$ and since $S \subseteq V(f_1,\ldots,f_r)$, we have that $S \subseteq V(f)$.
\end{proof}

We will now use Theorem~\ref{chaslesextend} to prove Theorem~\ref{7gives8}.

\begin{proof} (of Theorem~\ref{7gives8})

Let $S$ be the eight points of $\mathrm{PG}_3({\mathbb R})$,
$$
S=\{ \pi_i \cap \pi_j' \cap \pi''_k \ | \ i,j,k \in \{1,2 \}\}.
$$ 

To apply Theorem~\ref{chaslesextend} with $k=4$, $t=2$ and $r=3$, we have to show that for any subset $S'$ of $S$ of size $7$, the seven points of $S'$ impose $7$ linearly independent conditions on the space of quadrics. This then implies that $I(S')$ has dimension 3. By Theorem~\ref{chaslesextend}, we have that for any quadric passing through the seven points of $S'$ passes through the eight points of $S$. 

It remains to show that for any subset $S'$ of $S$ of size $7$, the seven points of $S'$ impose $7$ linearly independent conditions on the space of quadrics. 

We consider two cases, either two of the lines $\ell_1=\pi_1 \cap \pi_2$, $\ell_2=\pi_1' \cap \pi_2'$ and $\ell_3=\pi_1'' \cap \pi_2''$ are skew or they pairwise intersect.

In the case that two of the lines, $\ell_1$ and $\ell_2$ say, are skew we can fix a basis so that $\{ \pi_1,\pi_2 \} $, $\{ \pi_1',\pi_2' \} $  and $\{ \pi_1'',\pi_2'' \}$ are the kernels of the linear forms $X_1,X_2$ and $X_3,X_4$ and $\alpha_1(X),\alpha_2(X)$ respectively. 

The hypothesis that the intersection of the planes gives eight distinct points implies that, with respect to the basis, the points of $S$ are
$$
\{  (0,1,0,a_{13}) ,  (0,1,0,b_{13}) , (1,0,0,a_{23}) ,  (1,0,0,b_{23}) ,$$
$$
 (0,1,a_{14},0) ,  (0,1,b_{14},0) ,  (1,0,a_{24},0),  (1,0,b_{24},0)  \}.
$$
and $0 \neq a_{13} \neq b_{13} \neq 0$, etc.

Let $\psi$ be a homogeneous polynomial of degree two in four variables,
$$
\psi(X)=c_1X_1^2+\cdots+c_4X_4^2+c_{12}X_{1}X_2+\cdots+c_{34}X_3X_4.
$$
If we suppose $\psi \in I(S')$ then we get seven equations imposing conditions on the coefficients of $\psi$,  by considering $\psi(x)=0$, for all $x \in S'$. Without loss of generality, suppose that $S \setminus S'=\{  (1,0,b_{24},0)  \}$. The seven equations, written in matrix form give the matrix equation
$$
\left( \begin{array}{cccccccccc}
1 & 0 & a_{24}^2 & 0 & a_{24} & 0 & 0 & 0 & 0 & 0\\
 1 & 0 & 0 & a_{23}^2 & 0 & 0 & a_{23} & 0 & 0 & 0 \\
  1 & 0 & 0 & b_{23}^2 & 0 & 0 & b_{23} & 0 & 0 & 0 \\
   0 & 1 & a_{14}^2 & 0 & 0 & a_{14} & 0 & 0 & 0 & 0  \\
      0 & 1 & b_{14}^2 & 0 & 0 & b_{14} & 0 & 0 & 0 & 0  \\
      0 & 1 & 0 & a_{13}^2 & 0 & 0 & 0 &  a_{13} & 0 & 0 \\
             0 & 1 & 0 & b_{13}^2 & 0 & 0 & 0 &  b_{13} & 0 & 0 \\
\end{array}\right) \left(\begin{array}{c} c_1 \\ c_2 \\ c_3 \\ c_4 \\ c_{13} \\ c_{23} \\ c_{14} \\ c_{24} \\ c_{12} \\ c_{34} \end{array} \right)=0.
$$
Since this matrix has rank $7$, we conclude that $\dim I(S')=3$.

In the case that $\ell_1$, $\ell_2$ and $\ell_3$ pairwise meet, we can choose a basis so that the three intersection points $\ell_1 \cap \ell_2$, $\ell_1 \cap \ell_3$ and $\ell_2 \cap \ell_3$ are $  (0,0,1,0)  ,   (0,1,0,0)  ,   (1,0,0,0)  $ and $\pi_1 \cap \pi_1' \cap \pi_1''=   (0,0,0,1)  $. This implies that $\pi_1=\ker X_1$ and $\pi_1'=\ker(X_1+a_1X_4)$, for some $a_1 \in {\mathbb R}$, etc. 

With respect to the basis, the points of $S$ are
$$
\{   (0,0,0,1)  ,   (0,0,-a_3,1)  ,   (0,-a_2,0,1)  ,    (-a_1,0,0,1)  ,$$
$$
    (0,-a_2,-a_3,1)  ,   (-a_1,0,-a_3,1)  ,   (-a_1,-a_2,0,1)  ,    (-a_1,-a_2,-a_3,1)   \}.
$$
If we suppose $\psi \in I(S)$ then we get eight equations imposing conditions on the coefficients of $\psi$,  by considering $\psi(x)=0$, for all $x \in S$. The eight equations, written in matrix form give the matrix equation
$$
\left( \begin{array}{cccccccccc}
0 & 0 & 0 & 1 & 0 & 0 & 0 & 0 & 0 & 0 \\
a_1^2 & 0 & 0 & 1 & 0 & 0 & 0 & -a_1 & 0 & 0  \\
 0 & a_2^2 & 0 & 1 & 0 & 0 & 0 & 0 & -a_2 & 0 \\
0 & 0 & a_3^2 & 1 & 0 & 0 & 0 & 0 & 0 & -a_3  \\
a_1^2 & a_2^2 & 0 & 1 & a_1a_2 & 0 & 0 & -a_1 & -a_2 & 0 \\
a_1^2 & 0 & a_3^2 & 1 & 0 & a_1a_3 & 0 & -a_1 & 0 & -a_3  \\
0 & a_2^2  & a_3^2 & 1 & 0 & 0 & a_2a_3  & 0 & -a_2  & -a_3  \\
a_1^2 & a_2^2  & a_3^2 & 1 & a_1a_2 & a_1a_3 & a_2a_3  & -a_1 & -a_2  & -a_3  \\
\end{array}\right) \left(\begin{array}{c} c_1 \\ c_2 \\ c_3 \\ c_4 \\ c_{12} \\ c_{13} \\ c_{23} \\ c_{14} \\ c_{24} \\ c_{34} \end{array} \right)=0.
$$
The kernel of the row space has dimension one and is spanned by
$$
(-1,1,1,1,-1,-1,-1,1).
$$
Hence, removing any row from this matrix gives a matrix of rank $7$, so we conclude that $\dim I(S')=3$.
\end{proof}

\section{Acknowledgments}

I am grateful to Konrad Swanepoel and Aaron Lin for informing me that there are examples of sets $S$ of $n$ points which span $\frac{1}{2}n^2+O(n)$ ordinary planes which are not prisms, anti-prisms and have at most four co-planar points. This implies that the bound in Theorem~\ref{detailed} is optimal. Their example is contained in the intersection of an elliptic cone and a sphere and is obtained from \cite[Example 4.1]{LMMSSdZ2016} by inverse stereographic projection, as explained in Section~\ref{ordcircles}. This construction led me to discovering a mistake in an earlier version of this article, where I had assumed that the projection of $\ell_p$ from $p$ was not a point of the projection of $\phi_p$. I would like to thank Frank de Zeeuw who also discovered this error. 

I would like to thank Tim Penttila for pointing out the reference \cite{Pedoe1988} for Theorem~\ref{7gives8} and that the ``eight associated points theorem'' can be found in Ottone Hesse's 1840 publication \cite{Hesse1840} and even further back in work by Gabriel Lam\'e from 1818. 

Finally, I would like to thank the anonymous referees for their comments, suggestions and queries. They led to significant improvements and I am very grateful that the referees took the time to read such a long article.

\bigskip

{\small Simeon Ball}  \\
{\small Departament de Matem\`atiques}, \\
{\small Universitat Polit\`ecnica de Catalunya, Jordi Girona 1-3},
{\small M\`odul C3, Campus Nord,}\\
{\small 08034 Barcelona, Spain} \\
{\small {\tt simeon@ma4.upc.edu}}

\end{document}